\author{Thorben Kastenholz and Mark Pedron}
\thanks{Thorben Kastenholz was supported by the SPP2026 "Geometry at Infinity"
of the DFG.}
\date{\today}
\title{The Minimal Genus of Homology Classes in a Finite 2-Complex}
\begin{document}
\newcommand{\introduce}[1]
  {\textbf{#1}}
\newcommand{\comment}[1]
  {TODO:\@{\LARGE{#1}}}

\newcommand{\apply}[2]
  {{#1}\!\left({#2}\right)}
\newcommand{\at}[2]
  {\left.{#1}\right\rvert_{#2}}
\newcommand{\absolute}[1]
  {\left\lvert{#1}\right\rvert}
\newcommand{\EuclideanNorm}[1]
  {\left\lVert{#1}\right\rVert}
\newcommand{\Cardinality}[1]
  {\left\lvert{#1}\right\rvert}
\renewcommand{\and}%
  {\wedge}
\newcommand{\NaturalNumbers}%
  {\mathbb{N}}
\newcommand{\Integers}%
  {\mathbb{Z}}
\newcommand{\Rationals}%
  {\mathbb{Q}}
\newcommand{\Reals}%
  {\mathbb{R}}

\newcommand{\AttainableSet}[2]
  {\apply{\mathcal{A}}{#2,#1}}
\newcommand{\StableAttainableSet}[2]
  {\apply{\mathcal{A}_{\text{stable}}}{#2,#1}}
\newcommand{\CompressionSmallerEqual}%
  {\leq_c}

\newcommand{\Manifold}%
  {M}
\newcommand{\Dimension}%
  {d}
\newcommand{\CollarMap}[1]
  {c_{#1}}
\newcommand{\FundamentalClass}[1]
  {\left[#1\right]}
\newcommand{\Point}%
  {\ast}
\newcommand{\Interval}%
  {I}
\newcommand{\Circle}%
  {S^{1}}
\newcommand{\Ball}[1]
  {D^{#1}}
\newcommand{\Embedding}%
  {e}
\newcommand{\GeometricDegree}%
  {\mathcal{G}}

\newcommand{\Group}%
  {G}
\newcommand{\kernel}[1]
  {\apply{\operatorname{ker}}{#1}}
\newcommand{\cokernel}[1]
  {\apply{\operatorname{coker}}{#1}}

\newcommand{\HomologyClass}%
  {\alpha}
\newcommand{\HomologyOfSpaceObject}[2]
  {\apply{H_{#2}}{#1}}
\newcommand{\HomologyOfSpaceMorphism}[2]
  {{#1}_{\ast}}
\newcommand{\HomologyOfGroupObject}[2]
  {\apply{H_{#2}}{#1}}
\newcommand{\HomologyOfGroupMorphism}[2]
  {{#1}_{\ast}}
\newcommand{\HomologyOfSpacePairObject}[3]
  {\apply{H_{#3}}{{#1},{#2}}}

\newcommand{\TopologicalSpace}%
  {X}
\newcommand{\ContinuousMap}%
  {f}
\newcommand{\PathComponentsOfSpace}[1]
  {\apply{\pi_{0}}{#1}}
\newcommand{\HomotopyGroupOfObject}[3]
  {\apply{\pi_{#3}}{{#1},{#2}}}
\newcommand{\HomotopyGroupOfPairObject}[4]
  {\apply{\pi_{#4}}{{#1},{#2},{#3}}}
\newcommand{\EMSpace}[2]
  {\apply{K}{{#1},{#2}}}

\newcommand{\Genus}%
  {g}
\newcommand{\NumberBoundaryComponents}%
  {b}
\newcommand{\OrientationIndicator}%
  {o}
\newcommand{\Surface}%
  {\Sigma}
\newcommand{\Sphere}%
  {S^{2}}
\newcommand{\OrientedSurfaceOfGB}[2]
  {\Surface_{{#1},{#2}}} %
\newcommand{\Orientable}%
  {+}
\newcommand{\NotOrientable}%
  {-}
\newcommand{\SurfaceOfGBO}[3]
  {\Surface_{{#1},{#2},{#3}}}
\newcommand{\NumberOfNonsphericalComponents}%
  {n_{0}}
\newcommand{\ChiMinus}%
  {\chi^{-}}
\newcommand{\EulerCharacteristic}%
  {\chi}
\newcommand{\WeightedEulerCharacteristic}%
  {\EulerCharacteristic_{\WeightFunction}}

\newcommand{\ManifoldModels}%
  {\mathcal{M}}
\newcommand{\ManifoldModelsOfDimension}[1]
  {\apply{\ManifoldModels}{#1}}
\newcommand{\GeneralizedComplex}%
  {X}
\newcommand{\OrientableComplex}%
  {\GeneralizedComplex_{O}}
\newcommand{\Cell}%
  {x}
\newcommand{\CellsOfDim}[2]
  {\apply{#1}{#2}}
\newcommand{\CellType}%
  {\Manifold}
\newcommand{\GluingMap}[1]
  {r_{#1}}
\newcommand{\SkeletonOfDim}[2]
  {{#1}^{({#2})}}
\newcommand{\CellInclusion}[1]
  {\phi_{#1}}
\newcommand{\GeometricRealization}[1]
  {\left\vert{#1}\right\vert}
\newcommand{\IncludedCell}[1]
  {c_{#1}}
\newcommand{\IncludedPlasma}[1]
  {p_{#1}}
\newcommand{\IncludedMembrane}[1]
  {m_{#1}}
\newcommand{\SingularSet}[1]
  {W_{#1}}
\newcommand{\SingularSetProjection}[1]
  {\pi_{#1}}
\newcommand{\Block}[1]
  {B_{#1}}
\newcommand{\BoundaryGraphVertices}[1]
  {\apply{V_{\partial}}{#1}}
\newcommand{\BoundaryGraphEdges}[1]
  {\apply{E_{\partial}}{#1}}
\newcommand{\Degree}%
  {d}
\newcommand{\DegreeInfty}%
  {\Degree_{\infty}}
\newcommand{\DegreeFinite}%
  {\Degree_{fin}}
\newcommand{\WeightFunction}%
  {w}
\newcommand{\NumberOfBoundarySegments}%
  {\operatorname{Seg}}
\newcommand{\NumberOfBoundaryCircles}%
  {\operatorname{Circ}}

\newcommand{\CATMinusOne}%
  {\operatorname{CAT}(-1)}

\begin{abstract}
  We study surface representatives of homology classes of finite complexes
  which minimize certain complexity measures, including its genus and Euler
  characteristic.
  Our main result is that up to surgery at nullhomotopic curves minimizers are
  homotopic to cellwise coverings to the 2-skeleton.
  From this we conclude that the minimizing problem is in general
  algorithmically undecidable,
  but can be solved for 2-dimensional CAT(-1)-complexes.
\end{abstract}

\maketitle
\section{Introduction}
Fix a singular integral homology class
$\HomologyClass \in \HomologyOfSpaceObject{\TopologicalSpace}{2}$
of a CW-complex $\TopologicalSpace$.
We are interested in maps of closed oriented (possibly disconnected) surfaces
$\ContinuousMap \colon \Surface \to \TopologicalSpace$
representing $\HomologyClass$,
i.e., continuous maps $f$ (not necessarily embeddings) such that
$
  \apply
    {\HomologyOfSpaceMorphism{\ContinuousMap}{2}}
    {\FundamentalClass{\Surface}}
  =
  \HomologyClass
$.
Consider the following two invariants of the representing surface
(sums are taken over the connected components of $\Surface$):
\[
  \apply{\Genus}{\Surface}
  =
  \sum_{\Surface'}
    \apply{\Genus}{\Surface'}
  \quad
  \text{and}
  \quad
  \apply{\ChiMinus}{\Surface}
  =
  \sum_{\Surface'}
    \max
      \left\{
        0, -\apply{\EulerCharacteristic}{\Surface'}
      \right\}
\]
The problem of minimizing $\Genus$ or $\ChiMinus$ among all representatives
of a given homology class has been studied under various restrictions on
$\Surface$, $\TopologicalSpace$, and $\ContinuousMap$.
If $\TopologicalSpace$ is a low-dimensional manifold,
the case where one requires $\ContinuousMap$ to be an embedding
has received a lot of attention (see below).

If $\Surface$ is connected (and not a sphere),
$\apply{\ChiMinus}{\Surface} = 2\apply{\Genus}{\Surface}-2$,
hence the two minimizing problems coincide under the restriction of
$\Surface$ being connected.
But in general,
$
  \apply{\ChiMinus}{\Surface}
  =
  2\apply{\Genus}{\Surface}
  -
  2\apply{\NumberOfNonsphericalComponents}{\Surface}
$,
where $\apply{\NumberOfNonsphericalComponents}{\Surface}$ is the
number of non-spherical components of $\Surface$.
Hence the problems can (and will) differ.
To capture this difference, we define the
\introduce{attainable set} $\AttainableSet{\TopologicalSpace}{\HomologyClass}$
as
\[
  \AttainableSet{\TopologicalSpace}{\HomologyClass}
  =
  \left\{
    \left(
      \apply{\ChiMinus}{\Surface}, \apply{\Genus}{\Surface}
    \right)
  \middle\vert
    \ContinuousMap \colon \Surface \to \TopologicalSpace,
    \apply
      {\HomologyOfSpaceMorphism{\ContinuousMap}{2}}
      {\FundamentalClass{\Surface}}
    =
    \HomologyClass
  \right\}.
\]
Let us from now on write $\apply{\ChiMinus}{\HomologyClass}$ and
$\apply{\Genus}{\HomologyClass}$ for the minimum of these invariants
over all the surface representatives of $\HomologyClass$.

\subsection{Results}
In Theorem~\ref{thm:EffectiveComputability}, we provide an algorithm which
computes the attainable set for some class of finite $2$-complexes
$\TopologicalSpace$, which includes all CAT$(-1)$-2-complexes.
Conversely we show that for general finite $2$-complexes one cannot compute
$\Genus$ and $\ChiMinus$.
\begin{theorem}[Minimal Genus is Undecidable]%
\label{thm:MinimalGenusIsUndecidable}
  Let $B$ be a positive integer, and $i$ be either $\ChiMinus$ or $\Genus$.
  Then there is no algorithm, taking as input an
  (encoding of a) finite 2-complex $\GeneralizedComplex$ and a homology class
  $
    \HomologyClass
    \in
    \HomologyOfSpaceObject{\GeometricRealization{\GeneralizedComplex}}{2}
  $,
  which outputs whether $i\left(\omega\right)\leq B$.
\end{theorem}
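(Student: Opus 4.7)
The strategy is to reduce the (undecidable) word problem for finitely presented groups to the decision problem in the theorem. Fix, by Novikov--Boone, a finite presentation $\langle x_1,\dots,x_n\mid r_1,\dots,r_m\rangle$ of a group $G$ with unsolvable word problem, and let $X_G$ be its presentation $2$-complex. Given an input word $w$, I effectively build a finite $2$-complex $\GeneralizedComplex_w$ and a class $\HomologyClass_w\in\HomologyOfSpaceObject{\GeometricRealization{\GeneralizedComplex_w}}{2}$ so that $i(\HomologyClass_w)\leq B$ iff $w=_G 1$; combined with Novikov--Boone this yields undecidability.

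To get a genuine $2$-cycle I first replace $w$ by the commutator $w'=[w,t]$ inside $G'=G*\langle t\rangle$. The free-product normal form gives $w'=_{G'}1$ iff $w=_G 1$, and $w'\in[F',F']$ automatically so that the cellular boundary of the attached $2$-cell vanishes. Let $\GeneralizedComplex_w$ be the disjoint union of $X_{G'}\cup_{w'}D^2$ with a closed oriented surface $\Surface$ chosen so that $i([\Surface])=B$, and set $\HomologyClass_w=[e_{w'}]+[\Surface]$, where $e_{w'}$ denotes the newly attached $2$-cell. Since both $\Genus$ and $\ChiMinus$ are additive over disjoint components, the task reduces to proving $i([e_{w'}])=0$ iff $w=_G 1$.

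For the forward direction, suppose $w=_G 1$ and write $w=\prod_i g_ir_{k_i}^{\epsilon_i}g_i^{-1}$ in $F$. Then
\[
w'=w\cdot(tw^{-1}t^{-1})=\prod_i g_ir_{k_i}^{\epsilon_i}g_i^{-1}\cdot\prod_i(tg_i)r_{k_i}^{-\epsilon_i}(tg_i)^{-1}
\]
realises $w'$ as a product of relator conjugates in which each $r_{k_i}$ appears with coefficients $+\epsilon_i$ and $-\epsilon_i$. The associated nullhomotopy $2$-chain in $C_2(X_{G'})$ is therefore identically zero, so capping $e_{w'}$ with this nullhomotopy produces a map $S^2\to\GeometricRealization{\GeneralizedComplex_w}$ whose fundamental class is exactly $[e_{w'}]$, giving $i([e_{w'}])=0$.

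The main obstacle is the reverse direction: that $w\neq_G 1$ forces $i([e_{w'}])\geq 1$. Here I invoke the paper's main structural theorem, which realises any minimiser as a cellwise covering of the $2$-skeleton up to surgery along nullhomotopic curves in $\GeneralizedComplex_w$. A covering representing $[e_{w'}]$ must hit $e_{w'}$ with nonzero net degree, and the boundary circles of those lifts map to copies of the loop $w'$ in $X_{G'}$. Cancelling them combinatorially amounts precisely to writing $w'$ as a product of conjugates of the $r_i$, i.e.\ to a nullhomotopy of $w'$ in $X_{G'}$; since $w'\neq_{G'} 1$ no such expression exists, and the only available surgery curves lie inside $X_{G'}$ and leave those boundaries untouched. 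The minimum representative must therefore pay at least one handle, so $i([e_{w'}])\geq 1$ and $i(\HomologyClass_w)\geq B+1$, closing the reduction.
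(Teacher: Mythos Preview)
Your reduction has a genuine gap in the reverse direction, and for $i=\ChiMinus$ the construction fails outright.

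\textbf{The $\ChiMinus$ case is broken.} The class $[e_{w'}]$ is \emph{always} represented by a torus, regardless of whether $w=_G 1$: map $T^2$ (with its standard CW structure, one $2$-cell attached along $[a,b]$) to $X_w$ by $a\mapsto w$, $b\mapsto t$, and the $2$-cell to $e_{w'}$. Hence $\ChiMinus([e_{w'}])=0$ for every $w$, so $\ChiMinus(\HomologyClass_w)=B$ identically and your test never detects $w\neq_G 1$. (Concretely: take $G=\langle x\rangle$, $w=x$; then $X_w\simeq T^2$.) The paper sidesteps this by working in a single connected complex and using that $\Genus$-minimisers there are essentially connected, so $\ChiMinus=2\Genus-2$ on minimisers; your disjoint-union construction cannot use that link.

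\textbf{The genus reverse direction is not justified.} You claim that a spherical cellwise covering of $[e_{w'}]$ would force a nullhomotopy of $w'$ in $X_{G'}$. It does not. In such a covering the preimage of $e_{w'}$ is a collection of disks, say $p$ with positive and $q$ with negative orientation, $p-q=1$; removing them leaves a \emph{planar} surface mapping to $X_{G'}$ whose boundary circles are copies of $w'^{\pm 1}$. This only certifies that some product of conjugates of $w'$ and $(w')^{-1}$ is trivial in $G'$, which is vacuous (e.g.\ $w'\cdot (w')^{-1}=1$). What you actually need is that $[e_{w'}]$ is nonzero in $\HomologyOfGroupObject{\pi_1(X_w)}{2}$ whenever $w\neq_G 1$; equivalently (Hopf) that $[w,t]\notin[F,R]$ in the free group $F$ on $x_1,\dots,x_n,t$ with $R=\langle\!\langle r_1,\dots,r_m,[w,t]\rangle\!\rangle$. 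You have not argued this, and the cellwise-covering normal form does not give it.

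By contrast, the paper's reduction uses Gordon's presentation $\Pi_w$, engineered so that $w=_G 1$ forces the \emph{entire} group $G_{\Pi,w}$ to be trivial (hence every class is spherical), while $w\neq_G 1$ forces the two boundary loops $a,b$ of the attached $\Sigma_{B,2}$-cell to have infinite order. That dichotomy is what makes the genus bound sharp; your commutator trick $w\mapsto[w,t]$ does not produce a comparable rigidity in $\pi_1(X_w)$.
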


The restriction to $2$-complexes is inessential
due to the fact that the attainable set of $\HomologyClass$
depends only on the image of $\HomologyClass$ in the homology
of the fundamental group of $\TopologicalSpace$
(Corollary~\ref{crl:AttainableSetFundamentalGroup}).

A crucial ingredient for our investigation of representatives of a homology
class is an extension of a theorem by Edmonds
(see \cite{EdmondsBranchedCoverings}, and \cite{SkoraDegree} for a
strengthened result) in Theorem~\ref{thm:NormalForm}.
This extension can be found at the end of the introduction.
It yields the following corollary.
\begin{corollary}
\label{crl:CellwiseCovering}
  Given a combinatorial generalized $2$-complex $\GeneralizedComplex$ and a
  second homology
  class $\alpha\in \HomologyOfSpaceObject{\GeneralizedComplex}{2}$, for
  every representative $\left(\Surface,\ContinuousMap\right)$ there exists a
  new representative $\left(\Surface',\ContinuousMap'\right)$ such that
  $\apply{\ChiMinus}{\Surface'}\leq\apply{\ChiMinus}{\Surface}$,
  $\apply{\Genus}{\Surface'}\leq\apply{\Genus}{\Surface}$, and
  $\ContinuousMap'$ is a cellwise covering without folds.
\end{corollary}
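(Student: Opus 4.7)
The plan is to derive the corollary directly from Theorem~\ref{thm:NormalForm}, the announced extension of the Edmonds--Skora branched-covering normal form to maps from closed surfaces into generalized $2$-complexes. The abstract already signals the intended shape of the argument: minimizers are homotopic to cellwise coverings up to surgery at nullhomotopic curves, and this is exactly what the corollary asserts in unconditional form.

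First, I would feed the input pair $(\Sigma, f)$ into Theorem~\ref{thm:NormalForm}. This should produce a representative $(\Sigma', f')$ of $\alpha$ in which $f'$ is a cellwise covering of the $2$-skeleton, with $\Sigma'$ obtained from $\Sigma$ by a homotopy together with surgeries along simple closed curves that are nullhomotopic in $X$. I would then verify that each such surgery is harmless for our invariants: if the curve is non-separating in $\Sigma$, the compression strictly reduces the genus by one and raises $\chi$ by two, so $\chi^-$ decreases; if the curve is separating, it either splits off a sphere component (no change to $\chi^-$ or to the total $g$) or partitions a non-spherical component into two, again without increasing $\chi^-$ or $g$. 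In all cases one obtains $g(\Sigma') \leq g(\Sigma)$ and $\chi^-(\Sigma') \leq \chi^-(\Sigma)$.

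Second, I would address the condition \emph{without folds}. If $f'$ still exhibits a fold, I would locate two adjacent preimage $2$-cells that cover a common cell of $X$ with opposite orientations and cancel them by identifying them along their shared edges. Because the two contributions cancel in the cellular chain complex, the homology class $\alpha$ is preserved; because the identification is itself a surgery along a curve nullhomotopic in $X$, the previous case analysis applies and $g$, $\chi^-$ do not increase; and the total number of preimage $2$-cells strictly decreases, ensuring termination.

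The main obstacle, I expect, is the bookkeeping required to certify that every elementary move in the chain fits into the nullhomotopic-surgery framework whose effect on $g$ and $\chi^-$ is controlled. In particular one must check that fold cancellation does not spawn new folds along the way and that the cellwise covering property is preserved under the moves, so that the iteration genuinely converges to a fold-free cellwise covering representative realizing the claimed inequalities.
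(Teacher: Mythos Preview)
There is a genuine gap: Theorem~\ref{thm:NormalForm} carries the hypothesis that $f$ \emph{admits no squeezes}, and you apply it to an arbitrary representative $(\Sigma,f)$ without arranging this. The paper's intended derivation is the following two-step reduction. First, as long as $f$ admits a squeeze, perform an essential compression (move~(I)) at the offending curve; each such move strictly lowers $g$ (and does not raise $\chi^{-}$), so the process terminates in a representative $(\Sigma_{0},f_{0})$ with $g(\Sigma_{0})\le g(\Sigma)$, $\chi^{-}(\Sigma_{0})\le\chi^{-}(\Sigma)$, and $f_{0}$ admitting no squeezes. Only now does Theorem~\ref{thm:NormalForm} apply: it produces a cellwise covering without folds $(\Sigma',f')$ that is \emph{equivalent} to $(\Sigma_{0},f_{0})$ in the compression preorder, and since $(\chi^{-},g)$ is monotone for $\le_{c}$, equivalence forces $(\chi^{-}(\Sigma'),g(\Sigma'))=(\chi^{-}(\Sigma_{0}),g(\Sigma_{0}))$. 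Your surgery bookkeeping is correct in spirit, but it belongs to this preliminary squeeze-removal step, not to the application of the theorem itself.

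A secondary point: your separate fold-removal step is unnecessary, because the conclusion of Theorem~\ref{thm:NormalForm} already reads ``cellwise covering \emph{without folds}''. Moreover, the ad~hoc cancellation you sketch is not obviously well-defined in the generalized setting: the $2$-cells of $\Sigma'$ may be nontrivial covers of the cells of $X$ (not single copies), so ``identifying two adjacent cells along their shared edges'' need not make sense, and it is not clear that such a move preserves the cellwise-covering structure or terminates. The paper avoids all of this by building fold-freeness into the normal form from the outset (via the block decomposition and the allowability analysis in Lemmas~\ref{lem:dMinimizerNoBacktracking} and~\ref{lem:dMinimizerAllowable}).
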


By a combinatorial generalized $2$-complex we mean a $2$-complex whose $2$-cells
can be arbitrary compact, connected surfaces with boundary such that the gluing
maps are of a particular form.
Up to homotopy equivalence this yields the same topological spaces as ordinary
$2$-complexes.
Having no folds means that the map is locally injective outside of a finite set
contained in the preimage of the zero cells.

In Section~\ref{scn:AttainableSet} we prove basic results about the structure
of attainable sets. In Section~\ref{scn:Hurewicz} we relate the attainable sets
of a space to its fundamental group and in particular show that the attainable
sets of a space can be retrieved from the attainable sets of its $2$-skeleton.
In Section~\ref{scn:TwoComplexes} we will introduce generalized $2$-complexes
in order to prove Theorem~\ref{thm:NormalForm} in
Section~\ref{scn:CellwiseCovering}.
Theorem~\ref{thm:EffectiveComputability} will be proven in
Section~\ref{scn:WeightFunctions}. Lastly in Section~\ref{scn:Undecidability}
we prove Theorem~\ref{thm:MinimalGenusIsUndecidable}.
\subsection{Background}
For three-dimensional compact oriented manifolds Thurston (\cite{ThurstonNorm})
introduced a pseudo-norm on their second homology:
Let $\Manifold$ denote such a $3$-manifold, then
$
  \EuclideanNorm{-}_{\textrm{T}}
  \colon
  \HomologyOfSpaceObject{\Manifold}{2}
  \to
  \Integers
$
assigns to every homology class the minimum of
$\ChiMinus$ over all \emph{embedded} representatives of that class.
In particular, he showed that $\EuclideanNorm{-}_{\textrm{T}}$
is multiplicative, i.e.\ %
$
  \EuclideanNorm{n\HomologyClass}_{\textrm{T}}%
  =
  n\EuclideanNorm{\HomologyClass}_{\textrm{T}}%
$.
For non-primitive classes $\HomologyClass$ these representatives are forced to
be disconnected.
Gromov (\cite{GromovBoundedCohomology}) introduced an a priori unrelated
pseudo-norm $\EuclideanNorm{-}_{\textrm{G}}$ on homology with real coefficients
by considering the infimum of the $\ell^{1}$-norm of real cycles representing
the homology class.
He showed that this relates to the stabilization of $\ChiMinus$ via
$
  \EuclideanNorm{\HomologyClass}_{\textrm{G}}%
   =
   2\lim_{n\to\infty}
   \frac{\apply{\ChiMinus}{n\HomologyClass}}{n}%
$.
Using Thurston's results and the theory of taut foliations,
Gabai (\cite{GabaiFoliations}, Corollary~6.18) showed that
$
  \EuclideanNorm{\HomologyClass}_{\textrm{G}}%
  =
  2 \EuclideanNorm{\HomologyClass}_{\textrm{T}}%
$.
We can arrange these results in the following chain of inequalities:
\[
  \EuclideanNorm{\HomologyClass}_{\textrm{T}}
  \geq
  \apply{\ChiMinus}{\HomologyClass}
  \geq
  \frac{\apply{\ChiMinus}{n\HomologyClass}}{n}
  \stackrel{\textrm{Gromov}}{\to}
  \frac{1}{2}\EuclideanNorm{\HomologyClass}_{\textrm{G}}
  \stackrel{\textrm{Gabai}}{=}
  \EuclideanNorm{\HomologyClass}_{\textrm{T}}
\]
Hence $\apply{\ChiMinus}{\HomologyClass}$,
$\EuclideanNorm{\HomologyClass}_{\textrm{T}}$,
and $\frac{1}{2} \EuclideanNorm{\HomologyClass}_{\textrm{G}}$
are equal,
thus the Thurston norm can be defined using arbitrary (non-embedded) surfaces,
\emph{even without stabilization}.
This result clearly distinguishes $\ChiMinus$ from $\Genus$.
In general, even though
$
\lim_{n\to\infty}
\frac{\apply{\ChiMinus}{n\HomologyClass}}{n}
=
2 \lim_{n\to\infty} \frac{\apply{\Genus}{n\HomologyClass}}{n}
$
(this is true for arbitrary topological spaces),
$\Genus$ is not minimized by an embedded surface.
Additionally, while the equality above shows that $\ChiMinus$ is
multiplicative on homology classes,
$\Genus$ (and even $(\Genus-1)$) does not enjoy such multiplicative properties.
Let us illustrate this with an example:
\begin{example}[Genus 2 Handlebody With Three Tori Removed]%
\label{exm:Handlebody}
  \begin{figure}[h]
    \def\svgwidth{0.5\textwidth}
\begingroup%
  \makeatletter%
  \providecommand\color[2][]{%
    \errmessage{(Inkscape) Color is used for the text in Inkscape, but the package 'color.sty' is not loaded}%
    \renewcommand\color[2][]{}%
  }%
  \providecommand\transparent[1]{%
    \errmessage{(Inkscape) Transparency is used (non-zero) for the text in Inkscape, but the package 'transparent.sty' is not loaded}%
    \renewcommand\transparent[1]{}%
  }%
  \providecommand\rotatebox[2]{#2}%
  \newcommand*\fsize{\dimexpr\f@size pt\relax}%
  \newcommand*\lineheight[1]{\fontsize{\fsize}{#1\fsize}\selectfont}%
  \ifx\svgwidth\undefined%
    \setlength{\unitlength}{510.75002358bp}%
    \ifx\svgscale\undefined%
      \relax%
    \else%
      \setlength{\unitlength}{\unitlength * \real{\svgscale}}%
    \fi%
  \else%
    \setlength{\unitlength}{\svgwidth}%
  \fi%
  \global\let\svgwidth\undefined%
  \global\let\svgscale\undefined%
  \makeatother%
  \begin{picture}(1,0.506916)%
    \lineheight{1}%
    \setlength\tabcolsep{0pt}%
    \put(0,0){\includegraphics[width=\unitlength,page=1]{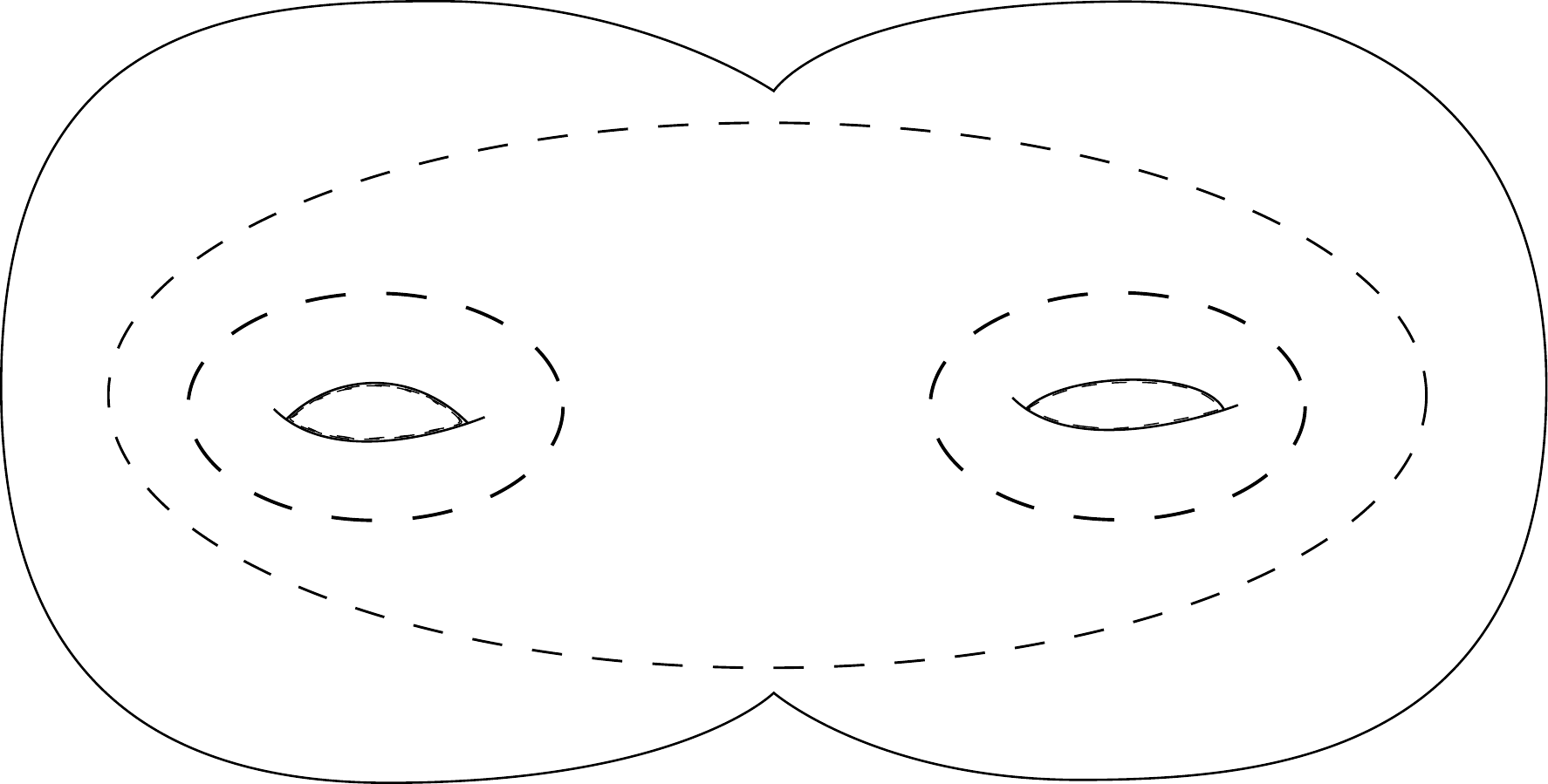}}%
    \put(0.32691349,0.28784221){\color[rgb]{0,0,0}\makebox(0,0)[lt]{\lineheight{1.25}\smash{\begin{tabular}[t]{l}$a$\end{tabular}}}}%
    \put(0.61332704,0.30882942){\color[rgb]{0,0,0}\makebox(0,0)[lt]{\lineheight{1.25}\smash{\begin{tabular}[t]{l}$b$\end{tabular}}}}%
    \put(0.56456264,0.42919711){\color[rgb]{0,0,0}\makebox(0,0)[lt]{\lineheight{1.25}\smash{\begin{tabular}[t]{l}$ab$\end{tabular}}}}%
  \end{picture}%
\endgroup%

    \caption{%
      The space $\TopologicalSpace$ in Example~\ref{exm:Handlebody}
    }
    \label{fig:Handlebody}
  \end{figure}
  Let $\TopologicalSpace$ be the link complement in a genus $2$
  handlebody as depicted in Figure~\ref{fig:Handlebody}.
  If $a,b$ denote the generators of the fundamental group of the handlebody,
  the $3$ link components represent the conjugacy classes of $a$, $b$ and
  $ab$.
  Each of the components is surrounded by an embedded torus.
  The fundamental classes of these tori are a basis of
  $\HomologyOfSpaceObject{\TopologicalSpace}{2}$.
  The space $\TopologicalSpace$ is homotopy equivalent to a $2$-complex
  given by gluing $3$ squares as depicted in Figure~\ref{fig:3Squares} and
  identifying opposite sides.
  The classes given by the $3$ squares correspond to these
  fundamental classes.
  Let $\HomologyClass$ be the sum of the $3$ squares with the orientation
  induced by a fixed orientation of the surrounding plane in
  Figure~\ref{fig:3Squares}.
  \begin{figure}[h]
    \def\svgwidth{0.5\textwidth}
    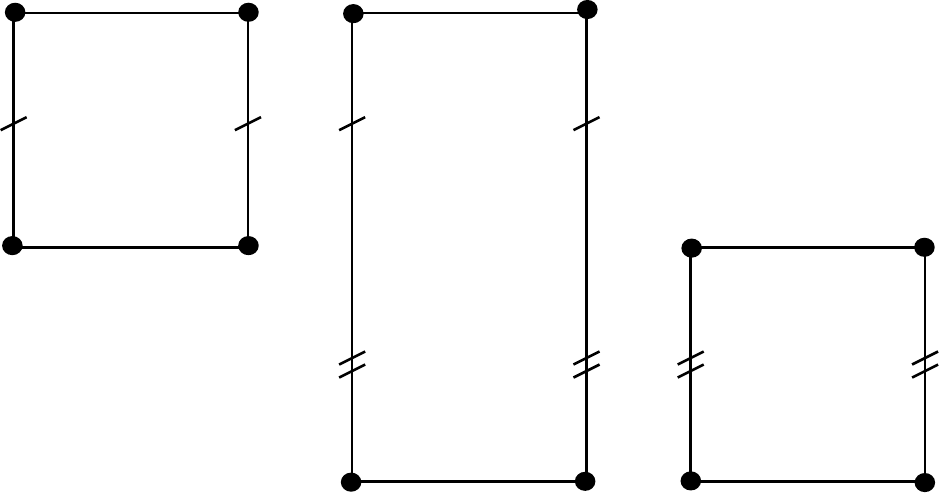
    \caption{%
      The Gluing Pattern of $\TopologicalSpace$ in Example~\ref{exm:Handlebody}
    }
    \label{fig:3Squares}
  \end{figure}
  There are two obvious surface representatives of $\HomologyClass$ one
  is given by $3$ tori, the other one is a genus $2$ surface
  bounding the handlebody.
  Using Corollary~\ref{crl:CellwiseCovering},
  an elementary but tedious consideration of small combinations
  of the squares shows that $\HomologyClass$ can not be represented by
  less than $3$ tori.
  Hence $\AttainableSet{\TopologicalSpace}{\HomologyClass}$ looks as
  depicted in Figure~\ref{fig:AttainableSetHandlebody}.
  \begin{figure}[h]
      \begin{tikzpicture}
    \clip (-1,-0.5) rectangle (8.8,6.7);
    \filldraw[white!80!black] (0,11) -- (0,3) -- (2,3) -- (2,2)
    --(20,11)--cycle;
    \def\crosslength{0.05} 
    \foreach \y in {3,4,...,10} {
      \pgfmathsetmacro{\fofy}{2*\y-2}
      \foreach \x in {0,2,...,10}{
          \ifthenelse{\x < \fofy \OR \x=\fofy} 
            {\draw (\x+\crosslength,\y+\crosslength) --
                +(-2*\crosslength,-2*\crosslength);
                \draw (\x-\crosslength,\y+\crosslength) --
                +(2*\crosslength,-2*\crosslength);
            }{}
        }
    };
    \draw (2+\crosslength,2+\crosslength) --
    +(-2*\crosslength,-2*\crosslength);
    \draw (2-\crosslength,2+\crosslength) --
    +(2*\crosslength,-2*\crosslength);
    \draw[dashed] (0,1) -- (11,6.5);
    \def\dashlength{0.05}
    \draw[->] (0,0) --  (8.7,0) node[anchor= north east]{$\chi^-$};
    \foreach \x in {1,2,...,8}{
        \draw (\x,-\dashlength) -- (\x,0) node[below]{$\x$} -- (\x,\dashlength);
    }
    \node at (-0.3,6.4) {$g$};
    \draw[->] (0,0) -- (0,6.7);
    \foreach \y in {1,2}
    {
        \draw (-\dashlength,\y)-- (0,\y) node[left]{$\y$} -- (\dashlength, \y);
    };
    \foreach \y in {2,...,9}{
        \node[left] at (0,\y) {$\y$};
    }
    \draw (-\dashlength,-\dashlength)-- (0,0) node[below left]{$0$} --
    (\dashlength,\dashlength);
  \end{tikzpicture}
    \caption{%
      The Attainable Set of $\TopologicalSpace$ in Example~\ref{exm:Handlebody}
    }
    \label{fig:AttainableSetHandlebody}
  \end{figure}
  In particular, there is no representative simultaneously minimizing
  $\ChiMinus$ and $\Genus$.
  Because $2\HomologyClass$ is represented by $3$	tori as well, we see
  that $\apply{\Genus}{2\HomologyClass}=3\neq
  4=2\apply{\Genus}{\HomologyClass}$, hence $\Genus$ is not multiplicative.
\end{example}

For four-dimensional manifolds the situation is different, as in this case
the restriction to embedded surfaces can influence the minimal genus
drastically.
The Thom conjecture states that given an embedded smooth projective curve
with genus $\Genus$ in $\mathbb{C}P^2$ (which is automatically connected),
every other embedded representative of the same homology class has
genus greater or equal than $\Genus$.
Using Seiberg-Witten invariants, this was proved by Kronheimer and Mrowka
(\cite{KronheimerMrowkaThomConjecture}).
Note that since $\mathbb{C}P^2$ is simply-connected,
every second homology class can be represented by a non-embedded sphere.
Their result was further strengthened by Ozsv\'{a}th and Szab\'{o}
(\cite{OszvathSzaboThomConjecture}),
again using Seiberg-Witten invariants.
They showed that every connected symplectic subsurface
of a symplectic four-manifold has minimal genus
among all embedded representatives of the same homology class.

There is also a relative analogue of the minimal genus of a homology class,
defined via extensions of maps
$\ContinuousMap \colon \Circle \to \TopologicalSpace$
to a connected surface with one boundary component.
The minimal genus of such an extension is the commutator length
of the conjugacy class of the element in
$\HomotopyGroupOfObject{\TopologicalSpace}{x}{1}$
corresponding to $\ContinuousMap \colon \Circle \to \TopologicalSpace$.
One can also stabilize the commutator length to obtain
the stable commutator length $\text{scl}$.
See~\cite{CalegariScl} for an introduction.
Some of our methods are similar to approaches in that context
(in particular~\cite{ChenSclGraphGroups},~\cite{CalegariSclRational},
and~\cite{CalegariSurfaceSubgroups}).

Similar to how commutator length carries more information than the stable
commutator length, the attainable set loses most of its structure when
stabilized (Lemma~\ref{lem:AttainableSetStabilization}).
Let us illustrate this with an example:
\begin{example}[Three Octagons]
  \label{exm:ThreeOctagons}
  The necessary calculations for this example are deferred to
  Example~\ref{exm:ThreeOctagonsContinued}.
  Consider the $2$-complex $\TopologicalSpace$ given by the gluing pattern in
  Figure~\ref{fig:GluingThreeOctagons} and identifying opposite sides.
  Each of the three cells is a closed surface of genus $2$, hence
  its homology has rank $3$ and is generated by its cells.
  We consider the homology class $\HomologyClass$ which is given as the sum of
  the three cells.
  The attainable sets of $\HomologyClass$ and its multiples are depicted in
  the left diagram of Figure~\ref{fig:AttainableSetThreeOctagons}.
  In this example $\ChiMinus$ and $\Genus$ are never minimized by the same
  representative.
  Moreover, all $\Genus$-minimizers are connected (up to sphere components).
  In the right diagram, the attainable sets
  $\AttainableSet{\TopologicalSpace}{n\HomologyClass}$ is rescaled, dividing by $n$.
  One can see how the subtle structure of the attainable set vanishes under
  stabilization.
  \begin{figure}[h]%
    \def\svgwidth{0.7\textwidth}
    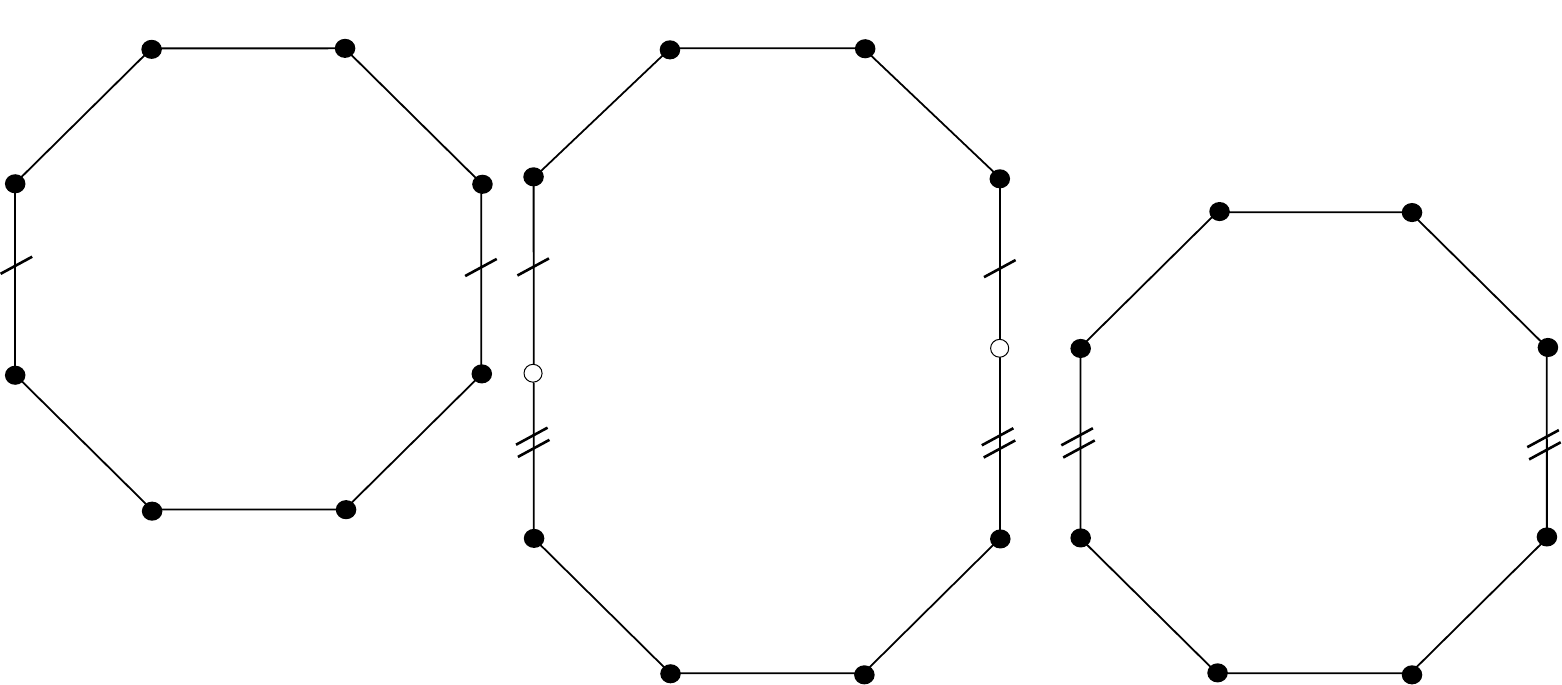
    \caption{%
      Gluing three octagons, Example~\ref{exm:ThreeOctagons}%
      \label{fig:GluingThreeOctagons}
    }
  \end{figure}
  \begin{figure}[h]%
    \resizebox{.9\textwidth}{!}{
      \begin{tikzpicture}[scale=0.7]
\begin{scope}
    \clip (-1,-1) rectangle (10.2,8.8);
    \filldraw[white!80!black] (6,11) -- (6,6) -- (8,6) --
    (8,5)--(20,11)--cycle;
    \def\crosslength{0.05} 
    \foreach \y in {6,7,...,10} {
        \pgfmathsetmacro{\fofy}{2*\y-2}
        \foreach \x in {6,8,10}{
            \ifthenelse{\x < \fofy \OR \x=\fofy} 
            {\draw (\x+\crosslength,\y+\crosslength) --
                +(-2*\crosslength,-2*\crosslength);
                \draw (\x-\crosslength,\y+\crosslength) --
                +(2*\crosslength,-2*\crosslength);
            }{}
        }
    };
    \draw (8+\crosslength,5+\crosslength) --
    +(-2*\crosslength,-2*\crosslength);
    \draw (8-\crosslength,5+\crosslength) --
    +(2*\crosslength,-2*\crosslength);
    \draw[dashed] (0,1) -- (11,6.5);
    \def\dashlength{0.05}
    \draw[->] (0,0) --  (10.2,0) node[anchor= north
    east]{$\chi^-$};
    \foreach \x in {1,2,...,9}{
        \draw (\x,-\dashlength) -- (\x,0) node[below]{$\x$}
        -- (\x,\dashlength);
    }
    \node at (-0.5,8.4) {$g$};
    \draw[->] (0,0) -- (0,8.8);
    \foreach \y in {1,2,...,8}
    {
        \draw (-\dashlength,\y)-- (0,\y) node[left]{$\y$} --
        (\dashlength, \y);
    };
    \draw (-\dashlength,-\dashlength)-- (0,0) node[below
    left]{$0$} --
    (\dashlength,\dashlength);
\end{scope}
\begin{scope}[xshift=12cm]
    \clip (-1,-1) rectangle (10.2,8.8);
    \draw[dotted] (6,11) -- (6,3) -- (20,10) --cycle;
    \filldraw[white!95!black] (6,11) -- (6,4) -- (6.6666666,4) --
    (6.6666666,3.6666666)--(20,10.333333)--cycle;
    \draw (6,11) -- (6,4) -- (6.6666666,4) --
    (6.6666666,3.6666666)--(20,10.333333)--cycle;
    \filldraw[white!90!black] (6,11) -- (6,4.5) -- (7,4.5) --
    (7,4)--(20,10.5)--cycle;
    \draw (6,11) -- (6,4.5) -- (7,4.5) --
    (7,4)--(20,10.5)--cycle;
    \filldraw[white!80!black] (6,11) -- (6,6) -- (8,6) --
    (8,5)--(20,11)--cycle;
    \draw (6,11) -- (6,6) -- (8,6) --
    (8,5)--(20,11)--cycle;
    \node at (6.35,3.5) [rotate=55]{$\ldots$};
    \def\dashlength{0.05}
    \draw[->] (0,0) --  (10.2,0) node[anchor= north
    east]{$\chi^-$};
    \foreach \x in {1,2,...,9}{
        \draw (\x,-\dashlength) -- (\x,0) node[below]{$\x$}
        -- (\x,\dashlength);
    }
    \node at (-0.5,8.4) {$g$};
    \draw[->] (0,0) -- (0,8.8);
    \foreach \y in {1,2,...,8}
    {
        \draw (-\dashlength,\y)-- (0,\y) node[left]{$\y$} --
        (\dashlength, \y);
    };
    \draw (-\dashlength,-\dashlength)-- (0,0) node[below
    left]{$0$} --
    (\dashlength,\dashlength);
\end{scope}
\end{tikzpicture}
    }
    \caption{%
      Attainable sets in Example~\ref{exm:ThreeOctagons}%
      \label{fig:AttainableSetThreeOctagons}
    }
  \end{figure}
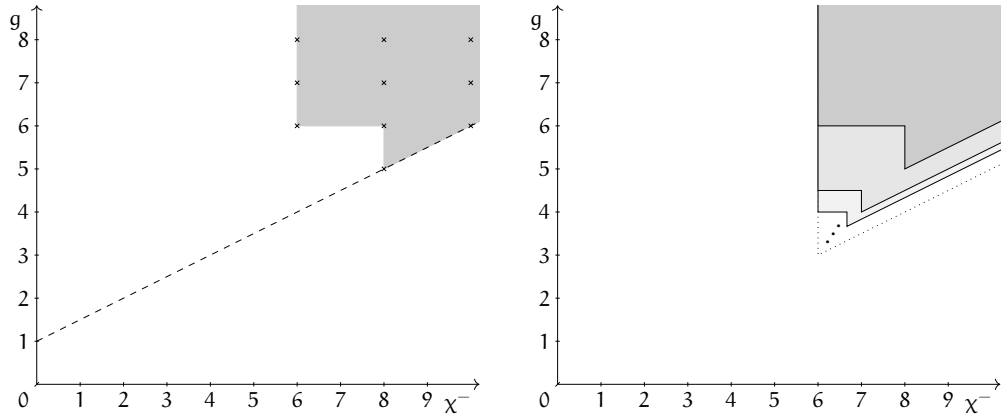
\end{example}

\subsection{Compression Order}
In order to get a better understanding of the attainable set,
we will consider a preorder%
\footnote{%
  i.e.,\ %
  $A \CompressionSmallerEqual A$, and
  $
     A \CompressionSmallerEqual B
     \and
     B \CompressionSmallerEqual C
     \implies
     A \CompressionSmallerEqual C
  $,
  but not necessarily
  $
     A \CompressionSmallerEqual B
     \and
     B \CompressionSmallerEqual A
     \implies
     A = B
  $
}
$\CompressionSmallerEqual$ on (unparameterized homotopy classes of)
representatives of a homology class, such that $\Genus$ and $\ChiMinus$
are monotonous with respect to this preorder.
This is based on the following construction:
\begin{definition}[Compression]
  For a given representative $(\Surface,\ContinuousMap)$
  of a second homology class
  $
    \HomologyClass
    \in
    \HomologyOfSpaceObject{\TopologicalSpace}{2}
  $
  with an embedded annulus
  $e \colon \Circle \times \Ball{1} \to \Surface$
  and an extension
  \[
    \begin{tikzcd}
      \Circle \times \Ball{1} \ar[d,hook]\ar[r,"\Embedding"]
        & \Surface \ar[d,"\ContinuousMap"]\\
      \Ball{2} \times \Ball{1} \ar[r,"\Embedding'"]
        & \TopologicalSpace
    \end{tikzcd}
  \]
  we set
  \begin{itemize}
    \item
      $
        \Surface_{\Embedding}
        =
        \Surface
        \setminus
        \mathring{
          \apply{\Embedding}{\Circle \times \Ball{1}}
        }
        \cup
        \Ball{2} \times \partial \Ball{1}
      $
    \item
      $
        \ContinuousMap_{\Embedding'}
        \colon
        \Surface_{\Embedding}
        \to
        \TopologicalSpace
        =
        \at
          {\ContinuousMap}
          {
            \Surface
            \setminus
            \apply{\Embedding}{\Circle \times \Ball{1}}
          }
        \cup
        \at{\Embedding'}{\Ball{2} \times \partial \Ball{1}}
      $
  \end{itemize}
  We will say that
  $
    \left(
      \Surface_{\Embedding}
      ,
      \ContinuousMap_{\Embedding'}
    \right)
  $
  is \introduce{obtained from $(\Surface,\ContinuousMap)$
  by compression at $\Embedding$}.
  By construction,
  $
    \left(
      \Surface_{\Embedding}
      ,
      \ContinuousMap_{\Embedding'}
    \right)
  $
  represents the same homology class as
  $
    \left(
      \Surface
      ,
      \ContinuousMap
    \right)
  $.
\end{definition}

Consider maps
$\ContinuousMap_{i} \colon \Surface_{i} \to \TopologicalSpace$,
$i \in \left\{ 1, 2 \right\}$,
representing the fixed homology class $\HomologyClass$.
We define the \introduce{compression preorder} by saying
that
$
  (\Surface_{1}, \ContinuousMap_{1})
  \CompressionSmallerEqual
  (\Surface_{2}, \ContinuousMap_{2})
$
if $(\Surface_{1}, \ContinuousMap_{1})$ can be obtained from
$(\Surface_{2}, \ContinuousMap_{2})$
by a finite sequence of the following moves:
\begin{enumerate}[(I)]
  \item%
  \label{itm:EssentialCompression}
    Compression at an essential embedded annulus
  \item%
  \label{itm:InessentialCompression}
    Compression at an inessential embedded annulus
  \item%
  \label{itm:InessentialInflation}
    The inverse of move~(\ref{itm:InessentialCompression})
\end{enumerate}
It is necessary to allow the inverse of move~(\ref{itm:InessentialCompression})
as otherwise the preorder would not have minima%
\footnote{%
  A minimum $A$ of a preorder is an element such that for all elements $B$,
  $B \leq A \implies A \leq B$.
}%
.
The compression order gives the universal measurement of geometric complexity
of surface representatives of a given homology class.
The important role of $\left(\ChiMinus,\Genus\right)$ comes from the fact that
the assignment
\[
  \left(\Surface,\ContinuousMap\right)
  \mapsto
  \left(
    \apply{\ChiMinus}{\Surface},
    \apply{\Genus}{\Surface}
  \right)
\]
is strictly monotone%
\footnote{%
  $%
    \left(\Surface_{1},\ContinuousMap_{1}\right)
    \CompressionSmallerEqual
    \left(\Surface_{2},\ContinuousMap_{2}\right)
    \Rightarrow
    \left(
      \apply{\ChiMinus}{\Surface_{1}},
      \apply{\Genus}{\Surface_{1}}
    \right)
    \leq
    \left(
      \apply{\ChiMinus}{\Surface_{2}},
      \apply{\Genus}{\Surface_{2}}
    \right)
  $
  and
  $%
    \left(\Surface_{1},\ContinuousMap_{1}\right)
    \CompressionSmallerEqual
    \left(\Surface_{2},\ContinuousMap_{2}\right)
    \and
    \left(\Surface_{2},\ContinuousMap_{2}\right)
    \not\CompressionSmallerEqual
    \left(\Surface_{1},\ContinuousMap_{1}\right)
    \Rightarrow
    \hphantom{.........}
    \left(
      \apply{\ChiMinus}{\Surface_{2}},
      \apply{\Genus}{\Surface_{2}}
    \right)
    \not\leq
    \left(
      \apply{\ChiMinus}{\Surface_{1}},
      \apply{\Genus}{\Surface_{1}}
    \right)
  $
}
from the compression order to the product order on the attainable set.
In particular, $\left(\ChiMinus,\Genus\right)$ reflects minima%
\footnote{%
  Reflects minima:
  $
    \left(
      \apply{\ChiMinus}{\Surface},
      \apply{\Genus}{\Surface}
    \right)
    \Rightarrow
    \left(\Surface,\ContinuousMap\right)
    \textrm{ minimum}
  $
}.

This allows us to formulate Theorem~\ref{thm:NormalForm} which
describes minima of the compression preorder up to equivalence in the preorder%
\footnote{%
  $A$ and $B$ are called equivalent if
  $A \CompressionSmallerEqual B \and B \CompressionSmallerEqual A$
}%
of generalized $2$-complexes:
\begin{theorem}[Normal Form]
\label{thm:NormalForm}
  Let $\TopologicalSpace$ denote a combinatorial generalized $2$-complex and
  $\Surface$ a closed oriented surface.
  Then every map $\ContinuousMap\colon \Surface \to \TopologicalSpace$ that
  admits no squeezes is equivalent in the compression order to a cellwise
  covering without folds.
\end{theorem}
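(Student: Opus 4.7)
The plan is to proceed in three stages. First, put $\ContinuousMap$ in general position with respect to the cell structure of $\TopologicalSpace$. Second, normalize $\ContinuousMap$ on the preimage of every open $2$-cell using a classical branched-covering theorem of Edmonds and Skora. Third, remove all folds along the preimage of the $1$-skeleton. Throughout, the hypothesis that $\ContinuousMap$ admits no squeezes ensures that the only surgeries I am ever forced to perform are at curves nullhomotopic in $\TopologicalSpace$; these are compressions at inessential annuli and, together with their inverses, generate the compression equivalence relation.

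For the first stage I would homotope $\ContinuousMap$ so that $\Gamma := \apply{\ContinuousMap^{-1}}{\SkeletonOfDim{\TopologicalSpace}{1}}$ is a finite graph embedded in $\Surface$, with vertex set $\apply{\ContinuousMap^{-1}}{\SkeletonOfDim{\TopologicalSpace}{0}}$, such that $\ContinuousMap$ sends each open edge of $\Gamma$ to an open edge of $\SkeletonOfDim{\TopologicalSpace}{1}$ by a local homeomorphism and each open component (plasma) of $\Surface \setminus \Gamma$ into the interior of a single $2$-cell of $\TopologicalSpace$. The definition of a combinatorial generalized $2$-complex from Section~\ref{scn:TwoComplexes} makes the usual transversality argument applicable with essentially no changes, and the homotopy does not change the compression class of $(\Surface, \ContinuousMap)$.

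For the second stage I would treat each plasma $P$ separately. The restriction $\at{\ContinuousMap}{\overline{P}}$ is a map from a compact surface with boundary to a $2$-cell of $\TopologicalSpace$, itself a compact surface with boundary. I would apply the Edmonds--Skora theorem to homotope it rel boundary to a branched covering, and then remove the branch points using that the $2$-cell is a surface. Any curve along which this procedure asks me to surger is either nullhomotopic in the target $2$-cell, in which case the resulting operation is an inessential compression (move (\ref{itm:InessentialCompression})) and is reversed in the equivalence class by an inessential inflation (move (\ref{itm:InessentialInflation})), or essential in the target, in which case it would furnish a squeeze of $\ContinuousMap$; by hypothesis the latter does not occur.

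The last and hardest step is to eliminate folds along $\Gamma$. A fold at an edge $e$ of $\Gamma$ consists of two adjacent plasmas mapping to the same side of $\apply{\ContinuousMap}{e}$ with matching orientations, meaning that $\ContinuousMap$ fails to be locally injective at a regular interior point of $e$. At such a fold I would identify the two plasmas along a neighbourhood of $e$, a local modification realized by an inessential inflation followed by an inessential compression, which therefore preserves the equivalence class while strictly decreasing the number of edges of $\Gamma$. The main obstacle is to show that this local move is always available: if the two plasmas cannot be identified, an essential obstructing curve must appear, and I expect the no-squeeze hypothesis to be used precisely to rule such a curve out (otherwise $\ContinuousMap$ would admit a squeeze at it). Once this local analysis is settled, induction on the number of edges of $\Gamma$ terminates the procedure and produces a cellwise covering without folds in the compression equivalence class of $(\Surface, \ContinuousMap)$.
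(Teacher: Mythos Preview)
Your three-stage outline has the right ingredients but a genuine gap in the order of operations, specifically at stage~2. The relative version of the Edmonds--Skora theorem you want to invoke requires the map to be \emph{allowable}: the restriction to the boundary of each plasma $P$ must already be a covering of $\partial\IncludedPlasma{\Cell}$ whose degree equals the geometric degree of $\at{\ContinuousMap}{\overline{P}}$. After a mere general-position step (your stage~1), this is simply false in general: the boundary map can still backtrack, and even once it is a covering its degree can exceed the geometric degree. Without allowability there is no rel-boundary homotopy to a branched covering, so stage~2 does not go through as written.

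The paper resolves this by doing the fold removal \emph{before} invoking Skora, not after. Concretely, it introduces a folding-degree invariant $\Degree$ on maps transverse to the membranes and shows (Lemmas~\ref{lem:dMinimizerNoBacktracking} and~\ref{lem:dMinimizerAllowable}) that a $\Degree$-minimizer in the compression equivalence class has no backtracking and is allowable; only then does Skora's theorem apply on each plasma. Branch points are then pushed into the $1$-singular set (not ``removed'' inside the cell), and the $1$- and $0$-blocks are normalized separately. Your stage~3 is also not yet a proof: ``identify the two plasmas along a neighbourhood of $e$'' is not a move on $(\Surface,\ContinuousMap)$, and you correctly flag that the justification is missing. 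The paper's folding-degree argument is what replaces this; it is the substantive new content beyond Edmonds--Skora.
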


\begin{acknowledgement}
  The authors would like to thank Ursula Hamenstädt for her support and many
  helpful discussions.
  We would like to thank the current and former members
  of the Bonn Geometry Group for introducing us to new topics and
  sharing their insights with us.
\end{acknowledgement}

\section{Attainable Sets}
\label{scn:AttainableSet}
We continue to denote by $\TopologicalSpace$ a CW-complex and by
$\HomologyClass \in \HomologyOfSpaceObject{\TopologicalSpace}{2}$
an integral homology class.
As described in the introduction,
the invariants $\apply{\ChiMinus}{\HomologyClass}$
and
$\apply{\Genus}{\HomologyClass}$
are neither independent nor does one determine the other.
To capture their interaction we defined the attainable set
$\AttainableSet{\TopologicalSpace}{\HomologyClass}$ as
\[
  \AttainableSet{\TopologicalSpace}{\HomologyClass}
  =
  \left\{
    \left(
      \apply{\ChiMinus}{\Surface},
      \apply{\Genus}{\Surface}
    \right)
  \middle\vert
    \ContinuousMap \colon \Surface \to \TopologicalSpace,
    \apply
      {\HomologyOfSpaceMorphism{\ContinuousMap}{2}}
      {\FundamentalClass{\Surface}}
    =
    \HomologyClass
  \right\}.
\]
Its geometry describes the way those two invariants restrain each other.
In this section we present results regarding its asymptotic structure.
If $\HomologyClass$ is representable by a union of spheres,
its attainable set is
\[
  \AttainableSet{\TopologicalSpace}{\HomologyClass}
  =
  \AttainableSet{\TopologicalSpace}{0}
  =
  \left\{
    \left( \ChiMinus , \Genus \right)
    \in
    2\Integers \times \Integers
  \middle\vert
    0 \leq \ChiMinus \leq 2 \Genus - 2
  \right\}
  \cup
  \left\{
    \left( 0 , 0 \right)
  \right\}.
\]
This attainable set is an exception in many ways,
so from now on in this section
\emph{%
  we assume that $\HomologyClass$ is not representable by a union of spheres%
},
i.e., $\apply{\Genus}{\HomologyClass} > 0$.
Be aware, though, that this does not exclude multiples of $\HomologyClass$ to
be representable by a union of spheres.

\subsection{Saturation}
By the inequalities
$
  0 \leq \apply{\ChiMinus}{\Surface} \leq 2\apply{\Genus}{\Surface}-2
$
for a surface $\Surface$ representing $\HomologyClass$,
the attainable set is contained in a cone whose edges are parallel to
the lines defined by $\ChiMinus=0$ and $\ChiMinus=2\Genus$.
We show that the attainable set is invariant under
saturation moves parallel to these axes.
\begin{lemma}[Invariance under Saturation Moves]%
\label{lem:AttainableSetSaturation}
  For $\HomologyClass \in \HomologyOfSpaceObject{\TopologicalSpace}{2}$
  with $\apply{\Genus}{\HomologyClass} > 0$, one has
  \[
    \left( 0 , 0 \right)
    \neq
    \left( \ChiMinus , \Genus \right)
    \in
    \AttainableSet{\TopologicalSpace}{\HomologyClass}
    \implies
    \left( \ChiMinus , \Genus + 1\right),
    \left( \ChiMinus + 2, \Genus + 1\right)
    \in
    \AttainableSet{\TopologicalSpace}{\HomologyClass}.
  \]
\end{lemma}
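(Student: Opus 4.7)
The plan is to exhibit, for any representative $(\Surface,\ContinuousMap)$ with $(\apply{\ChiMinus}{\Surface},\apply{\Genus}{\Surface}) = (\ChiMinus,\Genus) \neq (0,0)$, two modified representatives realizing $(\ChiMinus, \Genus+1)$ and $(\ChiMinus+2, \Genus+1)$ respectively. Both constructions add a handle or a torus in a region where the map is (homotopic to) a constant, so that the homology class is preserved.

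\textbf{Realizing $(\ChiMinus, \Genus+1)$.} I would simply adjoin a disjoint torus component $T$ together with a constant map $T \to \TopologicalSpace$. The disjoint union $\Surface \sqcup T$ with $\ContinuousMap$ extended by this constant still represents $\HomologyClass$, because the added component is nullhomologous. A torus has Euler characteristic $0$, so $\apply{\ChiMinus}{\Surface \sqcup T} = \apply{\ChiMinus}{\Surface}=\ChiMinus$, while $\apply{\Genus}{\Surface \sqcup T} = \Genus + 1$.

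\textbf{Realizing $(\ChiMinus+2, \Genus+1)$.} Here I first observe that, since $(\ChiMinus,\Genus) \neq (0,0)$, the surface $\Surface$ must contain at least one non-spherical component $\Surface'$: if all components were spheres, both invariants would vanish. Pick a small open disk $D \subset \Surface'$ contained in the preimage of a small contractible neighbourhood of some point $p \in \TopologicalSpace$; after a homotopy we may assume $\at{\ContinuousMap}{D}$ is constant equal to $p$. Removing $D$ and gluing in a punctured torus (equivalently, forming the connected sum of $\Surface'$ with a torus inside $D$) produces a new surface $\Surface''$ of genus $\apply{\Genus}{\Surface'}+1$, and we extend $\ContinuousMap$ by the constant map $p$ on the added handle. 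Because the handle is attached in a nullhomotopic region, the resulting map still represents $\HomologyClass$. The modified component $\Surface''$ is not a sphere (it had positive genus or we raised its genus from $1$ or more), so $\apply{\ChiMinus}{\Surface''} = -\apply{\EulerCharacteristic}{\Surface''} = -\apply{\EulerCharacteristic}{\Surface'}+2$, whether $\Surface'$ was a torus (giving $0 \mapsto 2$) or already had negative Euler characteristic (giving $-\apply{\EulerCharacteristic}{\Surface'} \mapsto -\apply{\EulerCharacteristic}{\Surface'}+2$). Summing over components yields $\apply{\ChiMinus}{\Surface} + 2$ and $\apply{\Genus}{\Surface}+1$, as desired.

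The only real subtlety is ensuring the existence of a non-spherical component in the second construction, which is where the hypothesis $(\ChiMinus,\Genus)\neq (0,0)$ enters. Neither step requires anything beyond an elementary homotopy argument; there is no serious obstacle and no appeal to the deeper machinery of the paper is needed.
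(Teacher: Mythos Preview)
Your argument is correct and matches the paper's proof: both realize the two saturation moves by taking the disjoint union with a torus (for $(\ChiMinus,\Genus+1)$) and the connected sum with a torus at a non-spherical component (for $(\ChiMinus+2,\Genus+1)$). You simply spell out in more detail why a non-spherical component exists and why the map can be taken constant on the handle, but the underlying idea is identical.
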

\begin{proof}
  Let $\Surface$ be a surface representing $\HomologyClass$ with
  $
    \left(
      \apply{\ChiMinus}{\Surface},
      \apply{\Genus}{\Surface}
    \right)
    =
    \left( \ChiMinus , \Genus \right)
  $.
  Taking the connected or disconnected sum with a torus at a
  non-spherical component changes the coordinates as required.
\end{proof}

Hence $\AttainableSet{\TopologicalSpace}{\HomologyClass}$ will agree,
up to a finite set, with the set
\begin{equation}%
\label{eqn:Cone}
  \left\{
    \left(
      \ChiMinus, \Genus
    \right)
    \in
    2\Integers \times \Integers
  \middle\vert
    \apply{\ChiMinus}{\HomologyClass}
    \leq
    \ChiMinus
    \leq
    2\Genus-2\apply{A}{\HomologyClass}
  \right\},
\end{equation}
\begin{equation*}
  \apply{A}{\HomologyClass}
  =
  \min
  \left\{
    2\Genus - \ChiMinus
  \middle\vert
    \left( \Genus , \ChiMinus \right)
    \in
    \AttainableSet{\TopologicalSpace}{\HomologyClass}
  \right\}.
\end{equation*}
We will identify the meaning of this number below.

\subsection{Connected Representative}
Let us call a surface $\Surface$ essentially connected if it has exactly one
non-spherical component.
If $\Surface$ is an essentially connected surface representing
$\HomologyClass$,
one necessarily has
$\apply{\ChiMinus}{\Surface}=2\apply{\Genus}{\Surface}-2$.
Conversely, if
$
  \left( \ChiMinus , \Genus \right)
  \in
  \AttainableSet{\TopologicalSpace}{\HomologyClass}
$
satisfies $\ChiMinus=2\Genus-2$,
one can find an essentially connected surface
$\Surface$ representing $\HomologyClass$ such that
$
  \left(
    \apply{\ChiMinus}{\Surface},
    \apply{\Genus}{\Surface}
  \right)
  =
  \left(\ChiMinus,\Genus\right).
$
Hence $\HomologyClass$ is represented by an essentially connected surface
if and only if $\apply{A}{\HomologyClass}=1$.
More generally, we have the following lemma:
\begin{lemma}%
\label{lem:EssentialSupport}
  Let
  $\TopologicalSpace = \bigsqcup \TopologicalSpace_{i}$
  be the decomposition of
  $\TopologicalSpace$
  into its path-components and let
  $
    \HomologyClass
    =
    {\left(
      \HomologyClass_{i}
    \right)}_{i}
    \in
    \bigoplus
    \HomologyOfSpaceObject{\TopologicalSpace_{i}}{2}
    \cong
    \HomologyOfSpaceObject{\TopologicalSpace}{2}
  $
  be a homology class of $\TopologicalSpace$.
  Then $\apply{A}{\HomologyClass}$ equals the number of $i$
  such that $\HomologyClass_{i}$ is not representable by a sphere.
\end{lemma}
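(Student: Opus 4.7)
The plan is to establish matching lower and upper bounds. Write $k$ for the number of indices $i$ with $\HomologyClass_i$ not representable by a sphere.

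For the lower bound, fix any representative $\ContinuousMap\colon\Surface\to\TopologicalSpace$ of $\HomologyClass$. Each connected component of $\Surface$ is mapped into a single path-component of $\TopologicalSpace$, so $\Surface$ splits as $\bigsqcup_{i}\Surface^{(i)}$, where $\Surface^{(i)}$ is the union of components mapped into $\TopologicalSpace_i$; by naturality of the pushforward, $\at{\ContinuousMap}{\Surface^{(i)}}$ represents $\HomologyClass_i$. For each of the $k$ indices with $\HomologyClass_i$ not sphere-representable, $\Surface^{(i)}$ cannot consist solely of spheres (otherwise one could connect-sum its components into a single sphere representing $\HomologyClass_i$), so it contributes at least one non-spherical component. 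Summing, $\apply{\NumberOfNonsphericalComponents}{\Surface}\geq k$, which yields the claimed lower bound on $\apply{A}{\HomologyClass}$.

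For the upper bound, I construct a representative whose non-spherical count equals $k$. For each $i$ with $\HomologyClass_i$ sphere-representable, pick a sphere representative; it contributes no non-spherical component. For each of the remaining $k$ indices, I will produce an essentially connected representative of $\HomologyClass_i$. The disjoint union of all these representatives then represents $\HomologyClass$ and has exactly $k$ non-spherical components.

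The principal step is thus the construction, for each non-sphere-representable $\HomologyClass_i$, of an essentially connected representative. Starting from any representative $(\Surface_i,\ContinuousMap_i)$ of $\HomologyClass_i$, the surface $\Surface_i$ must contain at least one non-spherical component (otherwise $\HomologyClass_i$ would be representable by a union of spheres, hence by a single sphere). To eliminate the extra components, I tube them together inside the path-connected space $\TopologicalSpace_i$: pick points on two components, an arc in $\TopologicalSpace_i$ joining their images, remove small open disks at those points, and glue in an annulus mapped along the arc. This preserves the represented homology class and reduces the component count by one; moreover it preserves non-sphericity, since tubing a sphere onto a non-spherical component leaves the latter topologically unchanged, while tubing two non-spherical components produces a single non-spherical component of combined genus. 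Iterating reduces $\Surface_i$ to a single non-spherical component, yielding the desired essentially connected representative.
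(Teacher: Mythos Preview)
Your argument is correct and follows the same underlying idea as the paper: both rest on the identity $2\apply{\Genus}{\Surface}-\apply{\ChiMinus}{\Surface}=2\apply{\NumberOfNonsphericalComponents}{\Surface}$ and the observation that the minimal number of non-spherical components among representatives of $\HomologyClass$ equals $k$. The paper's proof is extremely terse---it simply states this identity and ``takes the minimum of both sides''---whereas you spell out explicitly the lower bound (each $\Surface^{(i)}$ must contain a non-spherical component when $\HomologyClass_i$ is not sphere-representable) and the upper bound (tubing within each path-component to produce an essentially connected representative), which is exactly what a reader would supply to justify the paper's one-line argument.
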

\begin{proof}
  If $\Surface$ is a surface representing $\HomologyClass$ having
  $\apply{\NumberOfNonsphericalComponents}{\Surface}$
  non-spherical components, then
  $
    2\apply{\Genus}{\Surface}
    -
    \apply{\ChiMinus}{\Surface}
    =
    2\apply{\NumberOfNonsphericalComponents}{\Surface}.
  $
  Taking the minimum of both sides over all $\Surface$ representing
  $\HomologyClass$ proves the assertion.
\end{proof}

Thus we can write
$\apply{\NumberOfNonsphericalComponents}{\HomologyClass}$
instead of
$\apply{A}{\HomologyClass}$.
The coordinates of all surfaces $\Surface$ representing $\HomologyClass$ with
$
  \apply{\NumberOfNonsphericalComponents}{\Surface}
  =
  \apply{\NumberOfNonsphericalComponents}{\HomologyClass}
$
lie on a positive ray.  Hence there is a minimal point on this ray.
Let us write
$
  \left(
    \apply{\ChiMinus_{c}}{\HomologyClass},
    \apply{\Genus_{c}}{\HomologyClass}
  \right)
$
for its coordinates.
\begin{proposition}[Attainable Set Bounds]%
\label{prp:AttainableSetBounds}
  Consider $\HomologyClass \in \HomologyOfSpaceObject{\TopologicalSpace}{2}$
  with $\apply{\Genus}{\HomologyClass} > 0$.
  \begin{enumerate}[(a)]
    \item\label{itm:ConditionalSaturation}
      The attainable set is invariant under the following (conditional)
      saturation move:
      \[
        \left(\ChiMinus,\Genus\right)
        \in
        \AttainableSet{\TopologicalSpace}{\HomologyClass}
        \and
        \ChiMinus
        <
        2 \Genus - 2 \apply{\NumberOfNonsphericalComponents}{\HomologyClass}
        \implies
        \left(\ChiMinus+2,\Genus\right)
        \in
        \AttainableSet{\TopologicalSpace}{\HomologyClass}
      \]
    \item\label{itm:Bounds}
      $
        \AttainableSet{\TopologicalSpace}{\HomologyClass}
        \subseteq
        \left\{
          \left( \ChiMinus , \Genus \right)
        \middle\vert
            \apply{\Genus_{c}}{\HomologyClass}
            \leq
            \Genus
          \and
            \apply{\ChiMinus}{\HomologyClass}
            \leq
            \ChiMinus
            \leq
            2 \Genus - 2\apply{\NumberOfNonsphericalComponents}{\HomologyClass}
        \right\}.
      $
  \end{enumerate}
\end{proposition}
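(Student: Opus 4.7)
The heart of the proposition is part~(a); part~(b) then follows by iteration. Start with a representative $\left(\Surface,\ContinuousMap\right)$ realising the point $\left(\ChiMinus,\Genus\right)$. Since $\apply{\ChiMinus}{\Surface}=2\apply{\Genus}{\Surface}-2\apply{\NumberOfNonsphericalComponents}{\Surface}$, the hypothesis $\ChiMinus<2\Genus-2\apply{\NumberOfNonsphericalComponents}{\HomologyClass}$ translates into $\apply{\NumberOfNonsphericalComponents}{\Surface}>\apply{\NumberOfNonsphericalComponents}{\HomologyClass}$, so the plan is to construct a new representative $\left(\Surface',\ContinuousMap'\right)$ of $\HomologyClass$ with the same total genus but one fewer non-spherical component. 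Decompose $\TopologicalSpace=\bigsqcup\TopologicalSpace_{i}$ into path-components, write $\HomologyClass=\left(\HomologyClass_{i}\right)$ accordingly, and let $k_{i}$ be the number of non-spherical components of $\Surface$ mapping into $\TopologicalSpace_{i}$. The key observation is that $k_{i}\geq 1$ whenever $\HomologyClass_{i}$ is not sphere-representable, since otherwise $\HomologyClass_{i}$ would be a sum of classes represented by spheres. Together with Lemma~\ref{lem:EssentialSupport}, the inequality $\sum_{i}k_{i}=\apply{\NumberOfNonsphericalComponents}{\Surface}>\apply{\NumberOfNonsphericalComponents}{\HomologyClass}$ pins down some index $i_{0}$ falling into one of two cases.

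If some $k_{i_{0}}\geq 2$, I pick two non-spherical components $\Surface^{1},\Surface^{2}$ of $\Surface$ over $\TopologicalSpace_{i_{0}}$, choose interior points $p_{1}\in\Surface^{1}$, $p_{2}\in\Surface^{2}$, and connect $\apply{\ContinuousMap}{p_{1}}$ to $\apply{\ContinuousMap}{p_{2}}$ by a path $\gamma$ in the path-connected space $\TopologicalSpace_{i_{0}}$. Removing small open disks around $p_{1},p_{2}$ and gluing in a cylinder $\Circle\times\left[0,1\right]$ whose image is collapsed onto $\gamma$ yields the desired $\Surface'$. Otherwise every $k_{i}\leq 1$, and the inequality $\sum k_{i}>\apply{\NumberOfNonsphericalComponents}{\HomologyClass}$ combined with the observation above forces some $i_{0}$ with $k_{i_{0}}=1$ and $\HomologyClass_{i_{0}}$ sphere-representable. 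Let $\Surface^{1}$ be the unique non-spherical component of $\Surface$ over $\TopologicalSpace_{i_{0}}$ and let $\beta\in\HomologyOfSpaceObject{\TopologicalSpace_{i_{0}}}{2}$ denote the class it represents. The remaining components over $\TopologicalSpace_{i_{0}}$ are spheres and together represent $\HomologyClass_{i_{0}}-\beta$, so $\HomologyClass_{i_{0}}-\beta$ is sphere-representable, and since sphere-representability is closed under differences, so is $\beta$. The hypothesis $\apply{\Genus}{\HomologyClass}>0$ then provides a non-spherical component $\Surface^{2}$ of $\Surface$ lying over a different path-component of $\TopologicalSpace$; replacing $\Surface^{1}$ by a sphere representing $\beta$ and attaching $\apply{\Genus}{\Surface^{1}}$ tori mapped to a point to $\Surface^{2}$ produces the desired $\Surface'$.

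For part~(b) the lower bound $\apply{\ChiMinus}{\HomologyClass}\leq\ChiMinus$ is tautological, and the upper bound $\ChiMinus\leq 2\Genus-2\apply{\NumberOfNonsphericalComponents}{\HomologyClass}$ is the identity $\apply{\ChiMinus}{\Surface}=2\apply{\Genus}{\Surface}-2\apply{\NumberOfNonsphericalComponents}{\Surface}$ combined with $\apply{\NumberOfNonsphericalComponents}{\Surface}\geq\apply{\NumberOfNonsphericalComponents}{\HomologyClass}$ from Lemma~\ref{lem:EssentialSupport}. For $\apply{\Genus_{c}}{\HomologyClass}\leq\Genus$, I would iterate part~(a) on any representative of $\left(\ChiMinus,\Genus\right)$ until the number of non-spherical components drops to $\apply{\NumberOfNonsphericalComponents}{\HomologyClass}$; the resulting representative lies on the distinguished ray and still has total genus $\Genus$, so by definition it bounds $\apply{\Genus_{c}}{\HomologyClass}$ from above. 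The most delicate point is the second case of part~(a): one has to extract from the sphere-representability of the ambient class $\HomologyClass_{i_{0}}$ the sphere-representability of the component class $\beta$, and simultaneously relocate the lost genus onto a non-spherical component living in a \emph{different} path-component, whose existence is exactly what the hypothesis $\apply{\Genus}{\HomologyClass}>0$ guarantees.
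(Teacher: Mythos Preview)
Your proof is correct and follows the same overall strategy as the paper's: translate the inequality $\ChiMinus<2\Genus-2\apply{\NumberOfNonsphericalComponents}{\HomologyClass}$ into $\apply{\NumberOfNonsphericalComponents}{\Surface}>\apply{\NumberOfNonsphericalComponents}{\HomologyClass}$ and then merge components to raise $\ChiMinus$ by $2$ while keeping $\Genus$ fixed. Part~(b) is argued identically in both.

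The difference is that the paper's proof of part~(a) is terser and in fact skips the case you isolate as your ``second case''. The paper simply asserts that if $\apply{\NumberOfNonsphericalComponents}{\Surface}>\apply{\NumberOfNonsphericalComponents}{\HomologyClass}$ then two non-spherical components of $\Surface$ must land in the same path-component of $\TopologicalSpace$, and then tubes them. As you implicitly noticed, that assertion can fail when $\TopologicalSpace$ is disconnected: one may have $k_i\le 1$ for every $i$, with the excess coming from a single non-spherical component sitting over some $\TopologicalSpace_{i_0}$ whose class $\HomologyClass_{i_0}$ is sphere-representable. Your treatment of that situation---showing $\beta$ is sphere-representable because sphere-representable classes form a subgroup, replacing $\Surface^{1}$ by a sphere, and parking the lost genus on a non-spherical component over a different path-component (whose existence you correctly trace back to $\apply{\NumberOfNonsphericalComponents}{\HomologyClass}\ge 1$)---genuinely completes the argument. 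So your version is not just an alternative route but a more careful one; for path-connected $\TopologicalSpace$ the two proofs coincide.
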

\begin{proof}
  If
  $
    \apply{\ChiMinus}{\Surface}
    <
    2 \apply{\Genus}{\Surface}
    -
    2 \apply{\NumberOfNonsphericalComponents}{\HomologyClass}
  $,
  then there have to exist two non-spherical components of
  $\Surface$
  that get mapped to the same connected component of
  $\TopologicalSpace$.
  Therefore one can take the connected sum of these components.
  This proves Part~(\ref{itm:ConditionalSaturation}).

  For Part~(\ref{itm:Bounds}), it is trivial that
  $
    \apply{\ChiMinus}{\HomologyClass}
    \leq
    \ChiMinus
  $,
  and Lemma~\ref{lem:EssentialSupport} shows that
  $
    \ChiMinus
    \leq
    2 \Genus -2 \apply{\NumberOfNonsphericalComponents}{\HomologyClass}
  $.
  By the saturation move of Part~(\ref{itm:ConditionalSaturation}),
  for any point
  $
    \left( \ChiMinus, \Genus \right)
    \in
    \AttainableSet{\TopologicalSpace}{\HomologyClass}
  $,
  the point
  $
    \left(
      2\Genus - 2\apply{\NumberOfNonsphericalComponents}{\HomologyClass},
      \Genus
    \right)
  $
  is contained in $\AttainableSet{\TopologicalSpace}{\HomologyClass}$ as well.
  This point lies on the ray defining $\apply{\Genus_{c}}{\HomologyClass}$.
  Therefore we have
  $
   \apply{\Genus_{c}}{\HomologyClass}
   \leq
   \apply{\Genus}{\Surface}
  $.
\end{proof}

The saturation moves from Lemma~\ref{lem:AttainableSetSaturation}
and Proposition~\ref{prp:AttainableSetBounds},
Part~(\ref{itm:ConditionalSaturation}),
together with the inequalities of
Proposition~\ref{prp:AttainableSetBounds},
Part~(\ref{itm:Bounds}), are depicted
in Figure~\ref{fig:GenericAttainableSet}.

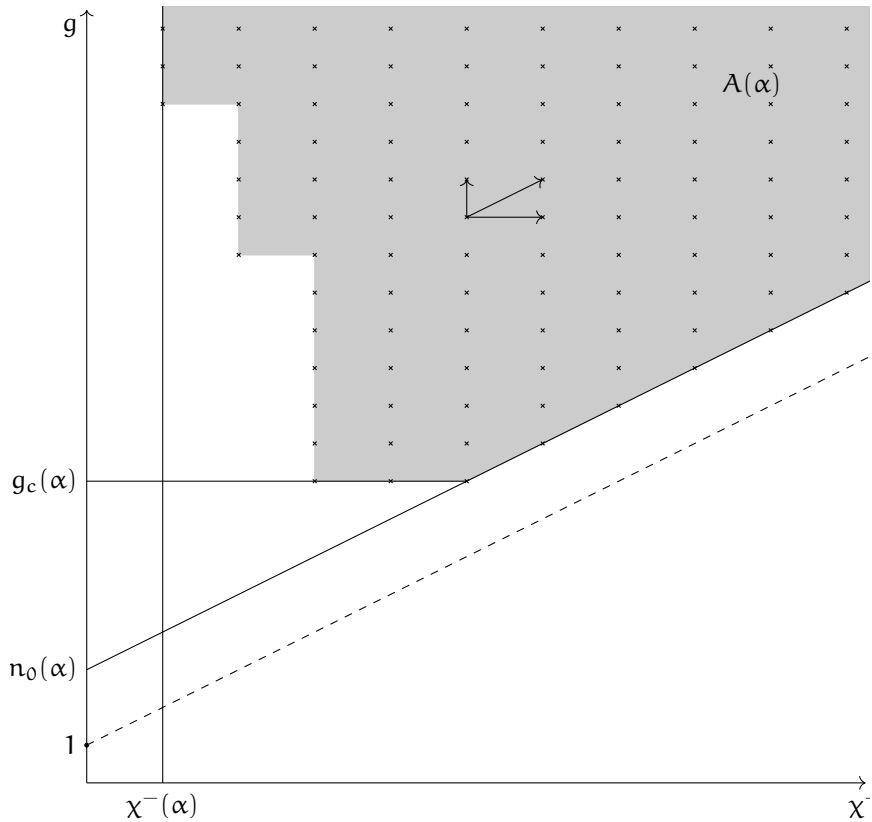
\begin{figure}[h]%
  \begin{tikzpicture}[scale=0.5]
   \clip (-2,-2) rectangle (20.6,20.6);
   \draw[->] (0,0) --  (20.5,0) node[below]{$\chi^-$};
   \draw[->] (0,0) -- (0,20.5) node[below left]{$g$};
   \filldraw[white!80!black] (2,22) -- (2,18) -- (4,18) --
   (4,14)--(6,14)--(6,8)--(10,8)--(34,20)--cycle;
   \draw[dashed] (0,1) node[left]{$1$} -- (38,20);

   \draw[->] (10,15) -- +(0,1);
   \draw[->] (10,15) -- +(2,0);
   \draw[->] (10,15) -- +(2,1);
   \node at (17.5,18.5) {$A(\alpha)$};
   \def\crosslength{0.05} 
   \foreach \y in {18,19,20,21} {
    \foreach \x in {2,4,...,20}{
    \draw (\x+\crosslength,\y+\crosslength) --
    +(-2*\crosslength,-2*\crosslength);
    \draw (\x-\crosslength,\y+\crosslength) --
    +(2*\crosslength,-2*\crosslength);
    }
   };
   \foreach \y in {14,15,16,17} {
     \pgfmathsetmacro{\fofy}{2*\y-6}
     \foreach \x in {4,6,...,20}{
          \ifthenelse{\x < \fofy \OR \x=\fofy} 
          {
              \draw (\x+\crosslength,\y+\crosslength) --
              +(-2*\crosslength,-2*\crosslength);
              \draw (\x-\crosslength,\y+\crosslength) --
              +(2*\crosslength,-2*\crosslength);
          }{}
     }
   };
   \foreach \y in {8,9,...,13} {
        \pgfmathsetmacro{\fofy}{2*\y-6}
        \foreach \x in {6,8,...,20}{
            \ifthenelse{\x < \fofy \OR \x=\fofy} 
            {
                \draw (\x+\crosslength,\y+\crosslength) --
                +(-2*\crosslength,-2*\crosslength);
                \draw (\x-\crosslength,\y+\crosslength) --
                +(2*\crosslength,-2*\crosslength);
            }{}
        }
    };
   \draw (2,22) -- (2,0) node[below]{$\chi^-(\alpha)$};
   \draw (10,8) -- (0,8) node [left]{$g_c(\alpha)$};
   \node[left] at (0,3) {$n_0(\alpha)$};
   \draw (0,3)--(34,20);
   \filldraw[black] (0,1) circle (0.05);
\end{tikzpicture}
  \caption{%
    An attainable set with saturation moves and lines depicting
    the bounds from Proposition~\ref{prp:AttainableSetBounds}.
  }
  \label{fig:GenericAttainableSet}
\end{figure}

\subsection{Stabilization}
The relationship between $\apply{\Genus}{\HomologyClass}$
and $\apply{\ChiMinus}{\HomologyClass}$ strengthens
for multiples of $\HomologyClass$.
Note that the limit  $\lim_{n\to\infty}
\frac{\apply{\ChiMinus}{n\HomologyClass}}{n}$
exists, since taking covers of a representative shows
that the sequence is monotonically decreasing.
Let us denote the limit by $\apply{\ChiMinus_{\text{stable}}}{\HomologyClass}$.
Similarly, we define $\apply{\Genus_{\text{stable}}}{\HomologyClass}$.
One has the following well known stabilization phenomenon:
\begin{lemma}%
\label{lem:StabilizationChiMinusGenus}
  For every $\HomologyClass\in \HomologyOfSpaceObject{\TopologicalSpace}{2}$,
  one has
  $
    \apply{\ChiMinus_{\text{stable}}}{\HomologyClass}
    =
    2\apply{\Genus_{\text{stable}}}{\HomologyClass}
  $.
\end{lemma}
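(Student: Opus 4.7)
The plan is to prove the two inequalities $\chi^-_{\text{stable}}(\alpha) \leq 2 g_{\text{stable}}(\alpha)$ and $2 g_{\text{stable}}(\alpha) \leq \chi^-_{\text{stable}}(\alpha)$ separately. The basic arithmetic driving everything is the identity $\chi^-(\Sigma) = 2 g(\Sigma) - 2 n_0(\Sigma)$, where $n_0(\Sigma)$ denotes the number of non-spherical components; the existence of both stable limits follows from subadditivity (take disjoint unions of representatives) via Fekete's lemma.

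The easy direction is immediate: since $n_0(\Sigma) \geq 0$, we have $\chi^-(\Sigma) \leq 2 g(\Sigma)$ for every closed oriented $\Sigma$. Applying this to a $g$-minimizing representative of $n\alpha$ yields $\chi^-(n\alpha) \leq 2 g(n\alpha)$, and dividing by $n$ and letting $n \to \infty$ gives the bound.

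For the reverse direction the idea is that the discrepancy $n_0(\Sigma)$ between $g(\Sigma)$ and $\chi^-(\Sigma)/2$ becomes negligible after passing to a high cover, because connected covers scale the quantity $g - 1$ instead of $g$. The key geometric input is that every closed orientable surface of genus $g \geq 1$ admits a connected $k$-fold covering of genus $k(g-1)+1$; this cover exists because $\pi_1$ surjects onto $\mathbb{Z}$ and hence has subgroups of every finite index. Given $f \colon \Sigma \to X$ representing $n\alpha$, I would build $\tilde f \colon \tilde\Sigma \to X$ by replacing each non-spherical component of $\Sigma$ by its connected $k$-fold cover (precomposed with $f$) and each spherical component by $k$ disjoint copies. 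The resulting $\tilde\Sigma$ represents $kn\alpha$, and a direct computation gives
\[
  g(\tilde\Sigma)
  \;=\;
  \sum_{i} \bigl( k(g_i - 1) + 1 \bigr)
  \;=\;
  \tfrac{k}{2} \chi^-(\Sigma) + n_0(\Sigma),
\]
where the sum runs over the non-spherical components of $\Sigma$ and $g_i$ is the genus of the $i$-th such component. Dividing by $kn$ and sending $k \to \infty$ with $n$ fixed forces $g_{\text{stable}}(\alpha) \leq \chi^-(\Sigma)/(2n)$; then minimizing over $\Sigma$ and letting $n \to \infty$ gives $2 g_{\text{stable}}(\alpha) \leq \chi^-_{\text{stable}}(\alpha)$.

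The only delicate point in this plan is the treatment of spherical components: they may carry nontrivial homology through $\pi_2(X)$ and so cannot simply be discarded, but duplicating them alongside the $k$-fold covers of the non-spherical components is exactly what is required to keep $\tilde f_\ast [\tilde\Sigma]$ equal to $kn\alpha$. Since spheres contribute zero to both $g$ and $\chi^-$, their presence does not affect the asymptotic estimates, and the argument above goes through unchanged.
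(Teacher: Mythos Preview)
Your proof is correct and uses essentially the same idea as the paper: the key step in both is to pass to connected $k$-fold covers of the non-spherical components so that the discrepancy $n_0(\Sigma)$ between $g$ and $\chi^-/2$ becomes negligible after dividing by the total degree. The only organizational difference is that the paper packages this covering argument inside the proof of the stable attainable set (Lemma~\ref{lem:AttainableSetStabilization}) and then deduces Lemma~\ref{lem:StabilizationChiMinusGenus} from it, whereas you argue directly; the content is the same.
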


This stabilization phenomenon can be derived from a stabilization result
for the attainable set.
Let us denote the \introduce{stable attainable set} (set of limit points)
$
  \lim_{n\to\infty}\frac{1}{n}
  \AttainableSet{\TopologicalSpace}{n\HomologyClass}
$
by $\StableAttainableSet{\TopologicalSpace}{\HomologyClass}$.
\begin{lemma}%
\label{lem:AttainableSetStabilization}
  For any $\HomologyClass \in \HomologyOfSpaceObject{\TopologicalSpace}{2}$,
  one has
  \begin{equation}%
  \label{eqn:AttainableSetStable}
    \StableAttainableSet{\TopologicalSpace}{\HomologyClass}=
    \left\{
      \left( \ChiMinus, \Genus \right)
      \in
      \Reals^{2}
    \middle\vert
      \apply{\ChiMinus_{\text{stable}}}{\HomologyClass}
      \leq
      \ChiMinus
      \leq
      2\Genus
    \right\}.
  \end{equation}
\end{lemma}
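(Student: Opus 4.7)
The plan is to prove both inclusions of \eqref{eqn:AttainableSetStable}; denote its right-hand side by $C$.

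For the easy inclusion $\StableAttainableSet{\TopologicalSpace}{\HomologyClass}\subseteq C$, I take any limit point $(x,y)$ of sequences $\left(\apply{\ChiMinus}{\Surface_n}/n,\apply{\Genus}{\Surface_n}/n\right)$ with $\Surface_n$ representing $n\HomologyClass$. The bound $x\geq\apply{\ChiMinus_{\text{stable}}}{\HomologyClass}$ is immediate from $\apply{\ChiMinus}{\Surface_n}\geq\apply{\ChiMinus}{n\HomologyClass}$ upon dividing by $n$ and passing to the limit. The bound $x\leq 2y$ follows from the componentwise inequality $\apply{\ChiMinus}{\Surface'}\leq 2\apply{\Genus}{\Surface'}$ summed over components: on spheres both sides vanish, on higher-genus components $\ChiMinus=2\Genus-2$.

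For the hard inclusion $C\subseteq\StableAttainableSet{\TopologicalSpace}{\HomologyClass}$, the strategy is to approximate an arbitrary $(x,y)\in C$ using the saturation moves of Lemma~\ref{lem:AttainableSetSaturation}. First I choose a sequence of representatives $\Surface_n$ of $n\HomologyClass$ whose normalized coordinates converge to the ``corner'' of $C$, namely $\left(\apply{\ChiMinus_{\text{stable}}}{\HomologyClass},\apply{\Genus_{\text{stable}}}{\HomologyClass}\right)$. Choosing $\Surface_n$ with $\apply{\Genus}{\Surface_n}\leq\apply{\Genus}{n\HomologyClass}+1$ gives $\apply{\Genus}{\Surface_n}/n\to\apply{\Genus_{\text{stable}}}{\HomologyClass}$; by additionally taking connected sums of non-spherical components that map into the same path-component of $\TopologicalSpace$, I arrange the uniform bound $\apply{\NumberOfNonsphericalComponents}{\Surface_n}\leq\apply{\NumberOfNonsphericalComponents}{\HomologyClass}$ (independent of $n$) via Lemma~\ref{lem:EssentialSupport}. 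The identity $\apply{\ChiMinus}{\Surface}=2\apply{\Genus}{\Surface}-2\apply{\NumberOfNonsphericalComponents}{\Surface}$ together with Lemma~\ref{lem:StabilizationChiMinusGenus} then forces $\apply{\ChiMinus}{\Surface_n}/n\to\apply{\ChiMinus_{\text{stable}}}{\HomologyClass}$.

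Next I apply Lemma~\ref{lem:AttainableSetSaturation} to $\Surface_n$: the move $(u,v)\mapsto(u,v+1)$ (disjoint union with a torus) is applied $a_n$ times and the move $(u,v)\mapsto(u+2,v+1)$ (connected sum with a torus at a non-spherical component) is applied $b_n$ times, with
\[
  b_n := \max\left(0,\,\lfloor (nx-\apply{\ChiMinus}{\Surface_n})/2\rfloor\right), \quad a_n := \max\left(0,\, ny-\apply{\Genus}{\Surface_n}-b_n\right).
\]
This produces a representative $\Surface'_n$ of $n\HomologyClass$; one verifies that $\left(\apply{\ChiMinus}{\Surface'_n}/n,\apply{\Genus}{\Surface'_n}/n\right)\to(x,y)$ using $x\geq\apply{\ChiMinus_{\text{stable}}}{\HomologyClass}$, $y\geq x/2$, and $\apply{\NumberOfNonsphericalComponents}{\Surface_n}/n\to 0$.

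The main obstacle is precisely the last convergence: without the uniform bound on $\apply{\NumberOfNonsphericalComponents}{\Surface_n}$ the approximation breaks down on the edge $y=x/2$ of $C$, where a short calculation gives $a_n\approx n(y-x/2)-\apply{\NumberOfNonsphericalComponents}{\Surface_n}$, which would be forced to zero and pull the limit away from the target. Lemma~\ref{lem:EssentialSupport} supplies exactly the freedom needed to absorb excess non-spherical components, and this is what makes the argument go through in the boundary case.
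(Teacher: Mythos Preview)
Your argument has a circularity: you invoke Lemma~\ref{lem:StabilizationChiMinusGenus} to conclude that $\apply{\ChiMinus}{\Surface_n}/n\to\apply{\ChiMinus_{\text{stable}}}{\HomologyClass}$, but in this paper Lemma~\ref{lem:StabilizationChiMinusGenus} is \emph{derived from} Lemma~\ref{lem:AttainableSetStabilization} (see the proof immediately following). Without that input, your construction only shows that the point $\bigl(2\apply{\Genus_{\text{stable}}}{\HomologyClass},\apply{\Genus_{\text{stable}}}{\HomologyClass}\bigr)$ lies in $\StableAttainableSet{\TopologicalSpace}{\HomologyClass}$, and then saturation gives you the cone $\{2\apply{\Genus_{\text{stable}}}{\HomologyClass}\leq\ChiMinus\leq 2\Genus\}$; you still need $2\apply{\Genus_{\text{stable}}}{\HomologyClass}\leq\apply{\ChiMinus_{\text{stable}}}{\HomologyClass}$ to identify this with $C$, and that is exactly the nontrivial half of Lemma~\ref{lem:StabilizationChiMinusGenus}.

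The paper avoids this by working from the other side: it starts with $\ChiMinus$-minimizers $\Surface_n$ (so $\apply{\ChiMinus}{\Surface_n}/n\to\apply{\ChiMinus_{\text{stable}}}{\HomologyClass}$ automatically) and then takes connected $m$-fold covers $\Surface_{n,m}$ of each component, which multiplies $\ChiMinus$ by $m$ while keeping $\NumberOfNonsphericalComponents$ fixed. Choosing $m(n)=\apply{\NumberOfNonsphericalComponents}{\Surface_n}$ forces $\apply{\NumberOfNonsphericalComponents}{\Surface_{n,m(n)}}/(n\cdot m(n))\to 0$, so both normalized coordinates converge to the corner $\bigl(\apply{\ChiMinus_{\text{stable}}}{\HomologyClass},\tfrac{1}{2}\apply{\ChiMinus_{\text{stable}}}{\HomologyClass}\bigr)$ directly. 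Your idea of bounding $\NumberOfNonsphericalComponents$ by connect-summing within path-components is a reasonable substitute for the covering trick, but to make it self-contained you must replace the appeal to Lemma~\ref{lem:StabilizationChiMinusGenus} with an independent argument---for instance, exactly this covering step applied to a $\ChiMinus$-minimizer, which yields $2\apply{\Genus_{\text{stable}}}{\HomologyClass}\leq\apply{\ChiMinus_{\text{stable}}}{\HomologyClass}$ in one line.
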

\begin{proof}
  If a multiple of $\HomologyClass$ is representable by spheres, then
  $\apply{\ChiMinus_{\text{stable}}}{\HomologyClass}$ is zero and the claim
  follows trivially.
  Let us assume for the rest of the proof that no multiple of
  $\HomologyClass$ is representable by spheres.

  For every representative $\Surface$ of $n \HomologyClass$ one has
  $
    n \apply{\ChiMinus_{\text{stable}}}{\HomologyClass}
    \leq
    \apply{\ChiMinus}{\Surface}
    \leq
    2\apply{\Genus}{\Surface}
  $.
  Hence $\StableAttainableSet{\TopologicalSpace}{\HomologyClass}$ is
  contained in the right hand side of Equation~(\ref{eqn:AttainableSetStable}).
  It remains to show the reverse inclusion.

  Given a point
  $
    \left(\ChiMinus, \Genus \right)
    \in
    \StableAttainableSet{\TopologicalSpace}{\HomologyClass}
  $,
  the saturation moves described in Lemma~\ref{lem:AttainableSetSaturation}
  imply that $\StableAttainableSet{\TopologicalSpace}{\HomologyClass}$
  contains all points between the rays
  $\left( \ChiMinus, \Genus + r \right)$ and
  $\left( \ChiMinus + 2r, \Genus + r \right)$ for $r\in \Reals_{\geq 0}$.
  Therefore, it suffices to show that
  $
    \left(
      \apply{\ChiMinus_{\text{stable}}}{\HomologyClass},
      0.5\ \apply{\ChiMinus_{\text{stable}}}{\HomologyClass}
    \right)
    \in
    \StableAttainableSet{\TopologicalSpace}{\HomologyClass}
  $,
  i.e., that there is a sequence $\Surface_{n}$ representing $n\HomologyClass$
  with
  $
    \apply{\ChiMinus_{\text{stable}}}{\HomologyClass}
    =
    \lim_{n\to \infty}
    \frac{\apply{\ChiMinus}{\Surface_{n}}}{n}
    =
    \lim_{n\to \infty}
    2\frac{\apply{\Genus}{\Surface_{n}}}{n}
  $.

  Let $\Surface_{n}$ denote a representative of $n\HomologyClass$
  such that
  $
    \apply{\ChiMinus}{\Surface_{n}}
    =
    \apply{\ChiMinus}{n\HomologyClass}
  $.
  Let $\Surface_{n,m}$ denote an $m$-fold covering space of $\Surface_{n}$
  such that $\Surface_{n,m}$
  has the same number of connected components as $\Surface_{n}$.
  This implies that
  $
    m \apply{\ChiMinus}{\Surface_{n}}
    =
    \apply{\ChiMinus}{\Surface_{n,m}}
    =
    2\apply{\Genus}{\Surface_{n,m}}
    -
    2\apply{\NumberOfNonsphericalComponents}{\Surface_{n}}
  $.
  We conclude that,
  for $\apply{m}{n} = \apply{\NumberOfNonsphericalComponents}{\Surface_{n}}$,
  one has
  \[
    \apply{\ChiMinus_{\text{stable}}}{\HomologyClass}
    =
    \lim_{n\to \infty}
    \frac{\apply{\ChiMinus}{\Surface_{n,\apply{m}{n}}}}{n\cdot\apply{m}{n}}
    =
    \lim_{n\to \infty}
    \frac{%
      2\apply{\Genus}{\Surface_{n,\apply{m}{n}}}
      -
      2\apply{\NumberOfNonsphericalComponents}{\Surface_{n}}
    }{n\cdot\apply{\NumberOfNonsphericalComponents}{\Surface_{n}}}
    =
    \lim_{n\to \infty}
    \frac{%
      2\apply{\Genus}{\Surface_{n,\apply{m}{n}}}
    }{n\cdot\apply{m}{n}}.
  \]
\end{proof}

\begin{proof}[Proof of Lemma~\ref{lem:StabilizationChiMinusGenus}]
  By Lemma~\ref{lem:AttainableSetStabilization} we can find a sequence
  $\Surface_{n}$ representing $n\HomologyClass$ such that
  $
    \lim_{n\to\infty} \frac{2\apply{\Genus}{\Surface_{n}}}{n}
    =
    \apply{\ChiMinus_{\text{stable}}}{\HomologyClass}
  $.
  Therefore we have the following chain of inequalities:
  \[
    \apply{\ChiMinus_{\text{stable}}}{\HomologyClass}
    =
    \lim_{n\to\infty} \frac{2\apply{\Genus}{\Surface_{n}}}{n}
    \geq
    \lim_{n\to\infty} \frac{2\apply{\Genus}{n\HomologyClass}}{n}
    \geq
    \lim_{n\to\infty} \frac{\apply{\ChiMinus}{n\HomologyClass}}{n}
    =
    \apply{\ChiMinus_{\text{stable}}}{\HomologyClass}
  \]
  Since both outer terms are the same, we conclude that all four terms
  are equal.
\end{proof}

Lemma~\ref{lem:AttainableSetStabilization}
implies that the delicate structure
of $\AttainableSet{\TopologicalSpace}{\HomologyClass}$,
i.e.\ the finite difference to the cone described in Equation~(\ref{eqn:Cone}),
vanishes when passing to the stabilization of the attainable set.

\subsection{Simplicial Volume and Further Invariants}
From the attainable set of a homology class $\HomologyClass$ one can recover
$\apply{\Genus}{\HomologyClass}$ as
$
  \min
  \left\{
    \Genus
  \middle\vert
    \left( \ChiMinus, \Genus \right)
    \in
    \AttainableSet{\TopologicalSpace}{\HomologyClass}
  \right\}
$
and $\apply{\ChiMinus}{\HomologyClass}$ as
$
  \min
  \left\{
    \ChiMinus
  \middle\vert
    \left( \ChiMinus, \Genus \right)
    \in
    \AttainableSet{\TopologicalSpace}{\HomologyClass}
  \right\}
$.
One can generalize this by taking the minimum of a linear combination of
$\Genus$ and $\ChiMinus$:
\[
  \apply{l_{p,q}}{\HomologyClass}
  =
  \min
  \left\{
    p \ChiMinus + q \Genus
  \middle\vert
    \left( \ChiMinus, \Genus \right)
    \in
    \AttainableSet{\TopologicalSpace}{\HomologyClass}
  \right\}
\]
For example, the simplicial volume%
\footnote{%
  here: minimal number of triangles in an \emph{integral} simplicial
  representation, ignoring sphere components
}
is $l_{1,2}$.
By the description of the attainable set in Equation~(\ref{eqn:Cone}),
$l_{p,q}$ is finite if and only if $\left(p,q\right)$ lies in the dual cone
\[
  \left\{
    \left(p,q\right)\in \Reals^{2}
    \middle\vert
    0\leq q \and 2p+q\geq 0
  \right\}
\]
These $l_{p,q}$ are convex and positively homogeneous in $\left(p,q\right)$.
As a corollary of Proposition~\ref{prp:AttainableSetBounds} we have:
\begin{lemma}
 If $p\leq 0$ and $2p+q\geq 0$, then
  $
    \apply{l_{p,q}}{\HomologyClass}
    =
    \left(p+2q\right) \apply{\Genus}{\HomologyClass}
    -
    2p \apply{\NumberOfNonsphericalComponents}{\HomologyClass}
  $.
\end{lemma}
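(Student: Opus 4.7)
The plan is to evaluate $l_{p,q}(\alpha)$ as a linear minimization problem over $\AttainableSet{\TopologicalSpace}{\HomologyClass}$ by combining the upper bound on $\chi^-$ from Proposition~\ref{prp:AttainableSetBounds}(b) with the conditional saturation move from Proposition~\ref{prp:AttainableSetBounds}(a). The sign hypotheses $p \leq 0$ and $2p+q \geq 0$ pick out a single extremal corner of the attainable set at which the functional is minimized, and the identification of this corner immediately yields the claimed formula.

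First I would establish the lower bound. Every $(\chi^-, g) \in \AttainableSet{\TopologicalSpace}{\HomologyClass}$ satisfies $\chi^- \leq 2g - 2\apply{\NumberOfNonsphericalComponents}{\HomologyClass}$ by Proposition~\ref{prp:AttainableSetBounds}(b). Since $p \leq 0$, multiplying by $p$ reverses the inequality to give $p\chi^- \geq p(2g - 2\apply{\NumberOfNonsphericalComponents}{\HomologyClass})$, hence
\[
  p\chi^- + qg \;\geq\; (2p + q)\,g - 2p\,\apply{\NumberOfNonsphericalComponents}{\HomologyClass}.
\]
Because $2p + q \geq 0$, the right-hand side is a monotonically non-decreasing function of $g$, so using $g \geq \apply{\Genus}{\HomologyClass}$ gives a uniform lower bound on $p\chi^- + qg$ in terms of $\apply{\Genus}{\HomologyClass}$ and $\apply{\NumberOfNonsphericalComponents}{\HomologyClass}$, matching the value claimed in the lemma.

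For the reverse inequality I would produce a representative that attains this bound. Start with any $\ContinuousMap \colon \Surface \to \TopologicalSpace$ representing $\HomologyClass$ with $\apply{\Genus}{\Surface} = \apply{\Genus}{\HomologyClass}$; its coordinates lie in $\AttainableSet{\TopologicalSpace}{\HomologyClass}$ and satisfy $\apply{\ChiMinus}{\Surface} \leq 2\apply{\Genus}{\HomologyClass} - 2\apply{\NumberOfNonsphericalComponents}{\HomologyClass}$. Each application of the conditional saturation move from Proposition~\ref{prp:AttainableSetBounds}(a) increases $\chi^-$ by $2$ without changing $g$, so after finitely many steps I reach the point $\bigl(2\apply{\Genus}{\HomologyClass} - 2\apply{\NumberOfNonsphericalComponents}{\HomologyClass},\, \apply{\Genus}{\HomologyClass}\bigr) \in \AttainableSet{\TopologicalSpace}{\HomologyClass}$, at which the functional takes exactly the value of the lower bound.

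There is no real obstacle once Proposition~\ref{prp:AttainableSetBounds} is in hand: as long as we are strictly below the upper boundary of the cone, two non-spherical components of $\Surface$ must share a path-component of $\TopologicalSpace$ and can be connect-summed, so the conditional saturation move is always applicable until the boundary is reached. The entire content of the lemma is therefore to recognize that, under the given sign hypotheses on $(p,q)$, the minimizing corner of $\AttainableSet{\TopologicalSpace}{\HomologyClass}$ is this specific extremal point, and to read off the value of the functional there.
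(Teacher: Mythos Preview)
Your proposal is correct and follows essentially the same argument as the paper: both use Proposition~\ref{prp:AttainableSetBounds}(b) with $p\le 0$ and $2p+q\ge 0$ to obtain the lower bound $(2p+q)\,\apply{\Genus}{\HomologyClass}-2p\,\apply{\NumberOfNonsphericalComponents}{\HomologyClass}$, and then exhibit equality at the corner point $\bigl(2\apply{\Genus}{\HomologyClass}-2\apply{\NumberOfNonsphericalComponents}{\HomologyClass},\apply{\Genus}{\HomologyClass}\bigr)$, which the paper simply names $(\chi^-_c,g_c)$ while you reach it via the conditional saturation move. (Note that both your computation and the paper's proof yield the coefficient $(2p+q)$, so the $(p+2q)$ in the displayed statement is a typo.)
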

\begin{proof}
  For a representative $\Surface$ of $\HomologyClass$ with
  $
    \left( \ChiMinus , \Genus \right)
    =
    \left( \apply{\ChiMinus}{\Surface} , \apply{\Genus}{\Surface} \right)
  $, one has
  \[
    p \ChiMinus + q \Genus
    \geq
    p
    \left(
      2 \Genus
      -
      2 \apply{\NumberOfNonsphericalComponents}{\HomologyClass}
    \right)
    +
    q \Genus
    \geq
    \left( 2 p + q \right) \apply{\Genus}{\HomologyClass}
    -
    2 p \apply{\NumberOfNonsphericalComponents}{\HomologyClass},
  \]
  and for a representative of $\left(\ChiMinus_{c},\Genus_{c}\right)$ we have
  equalities.
\end{proof}

Apart from these restraints, the $l_{p,q}$ seem to be independent.

\section{The Cokernel of the Hurewicz Map}
\label{scn:Hurewicz}
In this section we show that the attainable sets of homology classes of
a topological space $\TopologicalSpace$ are determined by
the fundamental group of the space.
This fact is essential because it allows us to restrict our attention to
$2$-complexes.
More precisely we show that the attainable set of a homology class is
determined by its image in
$\HomologyOfGroupObject{\HomotopyGroupOfObject{\TopologicalSpace}{\Point}{1}}{2}$
under the natural morphism
$
  \HomologyOfSpaceObject{\TopologicalSpace}{2}
  \to
  \HomologyOfGroupObject{\HomotopyGroupOfObject{\TopologicalSpace}{\Point}{1}}{2}
$.
Secondly, we show in Lemma~\ref{lem:MinimizerSurjectiveAttainableSet} and
Corollary~\ref{crl:AttainableSetFundamentalGroup}
how to relate the attainable sets between $\TopologicalSpace$ and
$\EMSpace{{\HomotopyGroupOfObject{\TopologicalSpace}{x}{1}}}{1}$.

For every path-connected pointed CW-complex
$\left(\TopologicalSpace, x\right)$
we have a Hurewicz-homomorphism
\[
  h_{2}
  \colon
  \HomotopyGroupOfObject{\TopologicalSpace}{x}{2}
  \to
  \HomologyOfSpaceObject{\TopologicalSpace}{2}
\]
whose cokernel is independent of the base point $x$.
An important observation
in~\cite{HopfFundamentalgruppe} is the following lemma.
\begin{lemma}%
\label{lem:CokernelGroupHomology}
  For every path-connected pointed CW-complex $\left(X,x\right)$,
  the natural sequence
  \[
    \HomotopyGroupOfObject{\TopologicalSpace}{x}{2}
    \stackrel{h_{2}}{\to}
    \HomologyOfSpaceObject{\TopologicalSpace}{2}
    \to
    \HomologyOfGroupObject
      {\HomotopyGroupOfObject{\TopologicalSpace}{x}{1}}
      {2}
    \to
    0
  \]
  is exact, hence
  $
    \HomologyOfGroupObject
      {\HomotopyGroupOfObject{\TopologicalSpace}{x}{1}}
      {2}
  $
  is the cokernel of the Hurewicz homomorphisms.
\end{lemma}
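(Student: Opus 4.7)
The plan is to realize $\EMSpace{\HomotopyGroupOfObject{\TopologicalSpace}{x}{1}}{1}$ as a CW-complex $Y$ obtained from $\TopologicalSpace$ by attaching cells of dimension $\geq 3$, and to read off the claimed exact sequence from the long exact sequence of the pair $(Y, \TopologicalSpace)$. Concretely, set $Y^{(2)} = \TopologicalSpace$, attach a $3$-cell along a fixed based representative of each element of $\HomotopyGroupOfObject{\TopologicalSpace}{x}{2}$ to kill $\pi_{2}$, and continue inductively in higher dimensions to kill all higher homotopy groups. The resulting CW-complex $Y$ is an Eilenberg--MacLane space for $\HomotopyGroupOfObject{\TopologicalSpace}{x}{1}$, and the inclusion $\TopologicalSpace \hookrightarrow Y$ induces the identity on $\pi_{1}$. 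Hence by the standard definition of group homology one has $\HomologyOfSpaceObject{Y}{2} = \HomologyOfGroupObject{\HomotopyGroupOfObject{\TopologicalSpace}{x}{1}}{2}$, and the map on $H_{2}$ induced by the inclusion is exactly the natural morphism whose cokernel we wish to identify.

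Since the relative CW-pair $(Y, \TopologicalSpace)$ has no cells in dimensions $\leq 2$, cellular homology gives $\HomologyOfSpacePairObject{Y}{\TopologicalSpace}{k} = 0$ for $k \leq 2$. Plugging this into the long exact sequence of the pair yields
\[
  \HomologyOfSpacePairObject{Y}{\TopologicalSpace}{3}
  \xrightarrow{\partial}
  \HomologyOfSpaceObject{\TopologicalSpace}{2}
  \to
  \HomologyOfSpaceObject{Y}{2}
  \to
  0.
\]
The surjectivity on the right already gives the last two terms of the claimed sequence, so all that remains is to identify $\operatorname{Im}(\partial)$ with $\operatorname{Im}(h_{2})$.

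For this, observe that since the cellular chain group in degree $2$ of $(Y,\TopologicalSpace)$ is zero, every cellular $3$-chain is automatically a cycle, so the canonical map from $C_{3}(Y, \TopologicalSpace)$ onto $\HomologyOfSpacePairObject{Y}{\TopologicalSpace}{3}$ is surjective, and hence $\operatorname{Im}(\partial)$ is generated as an abelian group by the images of the $3$-cell generators. By construction each such $3$-cell $e^{3}_{\alpha}$ was attached along a based map $\alpha \colon S^{2} \to \TopologicalSpace$ representing an element of $\HomotopyGroupOfObject{\TopologicalSpace}{x}{2}$, and the definition of the connecting homomorphism identifies $\partial[e^{3}_{\alpha}]$ with the singular homology class of $\alpha$, which is precisely $h_{2}(\alpha)$. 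Ranging over all $\alpha$ gives $\operatorname{Im}(\partial) = \operatorname{Im}(h_{2})$, completing the argument.

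The main (but minor) technical point is the last identification, $\partial[e^{3}_{\alpha}] = h_{2}(\alpha)$: this is a standard property of the connecting homomorphism in the long exact sequence of a CW-pair, but one must be careful when setting up the construction of $Y$ that each attaching map is a genuine representative of the $\pi_{2}$-class in question, so that the value of $h_{2}$ can be read off directly from the cellular data.
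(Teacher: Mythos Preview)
Your proof is correct and follows the same overall strategy as the paper: build a model of $K(\pi_1(X,x),1)$ from $X$ by attaching cells of dimension $\geq 3$, then read off the exact sequence from the long exact sequence of the pair. The paper's argument differs only in the last step, where it identifies $\operatorname{Im}\big(\partial\colon H_3(Y,X)\to H_2(X)\big)$ with $\operatorname{Im}(h_2)$ via the commutative square
\[
\begin{tikzcd}
\pi_3(Y,X,x) \ar[r,"\sim"] \ar[d,"h_3",->>] & \pi_2(X,x) \ar[d,"h_2"] \\
H_3(Y,X) \ar[r,"\partial"] & H_2(X)
\end{tikzcd}
\]
invoking the relative Hurewicz theorem to get surjectivity of $h_3$. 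Your variant bypasses relative Hurewicz by exploiting the explicit choice of $3$-cells (one for each element of $\pi_2$) and computing $\partial$ on cellular generators directly; this is more elementary and self-contained, at the cost of depending on that particular construction of $Y$, whereas the paper's argument works for any cell-attachment model of $K(\pi_1,1)$.
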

\begin{proof}
  By gluing in cells of dimension $3$ and higher we
  can include $\TopologicalSpace$ into an Eilenberg-MacLane space
  $
    \iota
    \colon
    \TopologicalSpace
    \to
    \EMSpace{\HomotopyGroupOfObject{\TopologicalSpace}{x}{1}}{1}
  $
  with $\iota$ inducing an isomorphism on fundamental groups.
  Let us abbreviate
  $\EMSpace{\HomotopyGroupOfObject{\TopologicalSpace}{x}{1}}{1}$
  by $\TopologicalSpace_{1}$.

  Now we have the following diagram with exact rows
  and vertical maps given by Hurewicz-homomorphisms:
  \begin{center}
    \begin{tikzcd}
        0
        \ar[r]
      &
        \HomotopyGroupOfPairObject%
          {\TopologicalSpace_{1}}
          {\TopologicalSpace}
          {x}
          {3}
        \ar[r,"\sim"]
        \ar[d,"h_{3}",->>]
      &
        \HomotopyGroupOfObject%
          {\TopologicalSpace}
          {x}
          {2}
        \ar[r]
        \ar[d,"h_{2}"]
      &
        0
        \ar[d]
    \\
      &
        \HomologyOfSpacePairObject%
          {\TopologicalSpace_{1}}
          {\TopologicalSpace}
          {3}
        \ar[r]
      &
        \HomologyOfSpaceObject%
          {\TopologicalSpace}
          {2}
        \ar[r,"\HomologyOfSpaceMorphism{\iota}{2}",->>]
      &
        \HomologyOfSpaceObject%
          {\TopologicalSpace_{1}}
          {2}
    \end{tikzcd}
  \end{center}

  The morphism $\HomologyOfSpaceMorphism{\iota}{2}$ in the diagram is surjective
  because $\iota$ is given by adding cells of dimension $3$ and higher.
  The morphism $h_3$ is surjective by the relative Hurewicz theorem.
  Hence $\HomologyOfSpaceMorphism{\iota}{2}$ is the cokernel of $h_{2}$.
\end{proof}

\begin{lemma}%
\label{lem:AttainableSetDescendsToCokernel}
  For every path-connected pointed CW-complex
  $\left(\TopologicalSpace, x \right)$
  and any two classes
  $
    \HomologyClass_{1}, \HomologyClass_{2}
    \in
    \HomologyOfSpaceObject{\TopologicalSpace}{2}
  $
  which agree in
  $
    \HomologyOfGroupObject
      {\HomotopyGroupOfObject{\TopologicalSpace}{x}{1}}
      {2}
  $
  one has
  $
    \AttainableSet{\TopologicalSpace}{\HomologyClass_{1}}
    =
    \AttainableSet{\TopologicalSpace}{\HomologyClass_{2}}.
  $
\end{lemma}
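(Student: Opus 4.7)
The plan is to leverage Lemma~\ref{lem:CokernelGroupHomology} to reduce the statement to a disjoint-union construction with a $2$-sphere, which leaves both $\ChiMinus$ and $\Genus$ untouched.

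By Lemma~\ref{lem:CokernelGroupHomology}, the kernel of the map $\HomologyOfSpaceObject{\TopologicalSpace}{2} \to \HomologyOfGroupObject{\HomotopyGroupOfObject{\TopologicalSpace}{x}{1}}{2}$ equals the image of the Hurewicz homomorphism $h_{2}$. Since $\HomologyClass_{1}$ and $\HomologyClass_{2}$ have the same image in $\HomologyOfGroupObject{\HomotopyGroupOfObject{\TopologicalSpace}{x}{1}}{2}$, their difference $\HomologyClass_{2} - \HomologyClass_{1}$ lies in the image of $h_{2}$. Therefore there exists a based map $s \colon \Sphere \to \TopologicalSpace$ such that $\apply{\HomologyOfSpaceMorphism{s}{2}}{\FundamentalClass{\Sphere}} = \HomologyClass_{2} - \HomologyClass_{1}$.

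Now I would show $\AttainableSet{\TopologicalSpace}{\HomologyClass_{1}} \subseteq \AttainableSet{\TopologicalSpace}{\HomologyClass_{2}}$. Given any representative $\ContinuousMap \colon \Surface \to \TopologicalSpace$ of $\HomologyClass_{1}$, form the disjoint union $\Surface' = \Surface \sqcup \Sphere$ and the map $\ContinuousMap' = \ContinuousMap \sqcup s \colon \Surface' \to \TopologicalSpace$. By additivity of the pushforward on disjoint unions,
\[
  \apply{\HomologyOfSpaceMorphism{\ContinuousMap'}{2}}{\FundamentalClass{\Surface'}}
  =
  \HomologyClass_{1} + \left(\HomologyClass_{2} - \HomologyClass_{1}\right)
  =
  \HomologyClass_{2},
\]
so $\ContinuousMap'$ represents $\HomologyClass_{2}$. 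Because an added sphere component is spherical, $\apply{\ChiMinus}{\Surface'} = \apply{\ChiMinus}{\Surface}$ and $\apply{\Genus}{\Surface'} = \apply{\Genus}{\Surface}$, so the point $\left(\apply{\ChiMinus}{\Surface}, \apply{\Genus}{\Surface}\right)$ lies in $\AttainableSet{\TopologicalSpace}{\HomologyClass_{2}}$.

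The reverse inclusion follows by the symmetric argument, using $\bar{s}$ (the sphere with reversed orientation) in place of $s$, whose pushforward of the fundamental class equals $\HomologyClass_{1} - \HomologyClass_{2}$. There is no real obstacle here: the entire content is packaged in Lemma~\ref{lem:CokernelGroupHomology}, and the rest is the observation that tacking on a sphere leaves the invariants $\left(\ChiMinus, \Genus\right)$ unchanged.
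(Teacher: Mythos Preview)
Your proof is correct and follows essentially the same approach as the paper: both invoke Lemma~\ref{lem:CokernelGroupHomology} to represent the difference $\HomologyClass_{2}-\HomologyClass_{1}$ by a sphere, then add that sphere as a disjoint component to pass from representatives of one class to the other without changing $(\ChiMinus,\Genus)$. You simply spell out explicitly the disjoint-union step and the invariance of the two quantities, which the paper leaves implicit.
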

\begin{proof}
  Because $\HomologyClass_{1}$ and $\HomologyClass_{2}$
  agree in
  $
    \HomologyOfGroupObject
      {\HomotopyGroupOfObject{\TopologicalSpace}{x}{1}}
      {2}
  $,
  by Lemma~\ref{lem:CokernelGroupHomology} there exists a map
  $\ContinuousMap \colon \Sphere \to \TopologicalSpace$
  such that
  $
    \HomologyClass_{1}
    +
    \apply
      {\HomologyOfSpaceMorphism{\ContinuousMap}{2}}
      {\FundamentalClass{\Sphere}}
    =
    \HomologyClass_{2}
  $.
  This implies that
  $
    \AttainableSet{\TopologicalSpace}{\HomologyClass_{1}}
    \subseteq
    \AttainableSet{\TopologicalSpace}{\HomologyClass_{2}}.
  $
  Similarly one obtains the other inclusion.
\end{proof}

Together with Lemma~\ref{lem:AttainableSetDescendsToCokernel},
this shows that the attainable set only depends on the induced class
in the group homology of the fundamental group, and $\TopologicalSpace$.
Therefore we extend the notation of attainable set and write
$\AttainableSet{\TopologicalSpace}{\HomologyClass}$ for a class
$
  \HomologyClass
  \in
  \HomologyOfGroupObject
    {\HomotopyGroupOfObject{\TopologicalSpace}{x}{1}}
    {2}
$
meaning the attainable set of any preimage of $\HomologyClass$ in the homology
of $\TopologicalSpace$.

Now we want to realize this homological statement on the level of spaces.
Therefore we analyze how attainable sets, and more generally, the compression
preorder, behave under maps between topological spaces.
\begin{definition}[Minimizer Surjective]
  We call a map
  $
    \ContinuousMap
    \colon
    \TopologicalSpace_{1}
    \to
    \TopologicalSpace_{2}
  $
  between CW-complexes \introduce{minimizer surjective}, if
  \begin{enumerate}[(a)]
    \item
      $
        \HomologyOfSpaceMorphism{\ContinuousMap}{2}
        \colon
        \HomologyOfSpaceObject{\TopologicalSpace_{1}}{2}
        \to
        \HomologyOfSpaceObject{\TopologicalSpace_{2}}{2}
      $
      is surjective and
    \item
      for every
      $\HomologyClass \in \HomologyOfSpaceObject{\TopologicalSpace_{1}}{2}$
      and every closed, oriented surface
      $\ContinuousMap_{2} \colon \Surface_{2} \to \TopologicalSpace_{2}$
      representing
      $
        \apply
          {\HomologyOfSpaceMorphism{\ContinuousMap}{2}}
          {\HomologyClass}
      $
      there exists a closed, oriented surface
      $\ContinuousMap_{1} \colon \Surface_{1} \to \TopologicalSpace_{1}$
      representing $\HomologyClass$ and
      $
        \left(\Surface_{1}, \ContinuousMap \circ \ContinuousMap_{1}\right)
        \CompressionSmallerEqual
        \left(\Surface_{2}, \ContinuousMap_{2}\right)
      $.
  \end{enumerate}
\end{definition}

Being minimizer surjective is invariant under homotopy.
Note that being minimizer surjective implies that one can lift minimizers of
$\CompressionSmallerEqual$ of representatives of a homology class up to
equivalence to representatives of every class in its preimage.
The following lemma shows that a minimizer surjective map
allows to shift computing attainable sets between its domain and codomain in
both directions.
\begin{lemma}%
\label{lem:MinimizerSurjectiveAttainableSet}
  Given a minimizer surjective map
  $
    \ContinuousMap
    \colon
    \TopologicalSpace_{1}
    \to
    \TopologicalSpace_{2}
  $
  between path-connected CW-complexes, for every
  $\HomologyClass \in \HomologyOfSpaceObject{\TopologicalSpace_{1}}{2}$
  one has
  $
    \AttainableSet{\TopologicalSpace_{1}}{\HomologyClass}
    =
    \AttainableSet%
      {\TopologicalSpace_{2}}
      {\apply{\HomologyOfSpaceMorphism{\ContinuousMap}{2}}{\HomologyClass}}
  $.
\end{lemma}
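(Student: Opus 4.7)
The inclusion $\AttainableSet{\TopologicalSpace_{1}}{\HomologyClass} \subseteq \AttainableSet{\TopologicalSpace_{2}}{\apply{\HomologyOfSpaceMorphism{\ContinuousMap}{2}}{\HomologyClass}}$ is the easy half: any representative $\Surface \to \TopologicalSpace_{1}$ of $\HomologyClass$ post-composes with $\ContinuousMap$ to a representative of the image class with the identical invariants $\left(\apply{\ChiMinus}{\Surface}, \apply{\Genus}{\Surface}\right)$, and I would dispatch this in one line.

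For the converse, fix a point $(\chi^-, g) \in \AttainableSet{\TopologicalSpace_{2}}{\apply{\HomologyOfSpaceMorphism{\ContinuousMap}{2}}{\HomologyClass}}$ realized by some $\ContinuousMap_{2} \colon \Surface_{2} \to \TopologicalSpace_{2}$. The plan is to use the minimizer surjective hypothesis to produce a representative $\ContinuousMap_{1} \colon \Surface_{1} \to \TopologicalSpace_{1}$ of $\HomologyClass$ with $(\Surface_{1}, \ContinuousMap \circ \ContinuousMap_{1}) \CompressionSmallerEqual (\Surface_{2}, \ContinuousMap_{2})$. Monotonicity of $(\ChiMinus, \Genus)$ along the compression preorder then gives $\left(\apply{\ChiMinus}{\Surface_{1}}, \apply{\Genus}{\Surface_{1}}\right) \leq (\chi^-, g)$ componentwise, and I am left with the task of inflating $\Surface_{1}$ inside $\AttainableSet{\TopologicalSpace_{1}}{\HomologyClass}$ until it reaches coordinates exactly $(\chi^-, g)$.

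The inflation would combine the unconditional $(0,1)$- and $(2,1)$-moves from Lemma~\ref{lem:AttainableSetSaturation} with the conditional $(2,0)$-move from Proposition~\ref{prp:AttainableSetBounds}\,(a). A short linear-algebraic check shows that any nonnegative lattice increment whose first coordinate is even can be written as a nonnegative combination of these three moves, and by scheduling the $(2,0)$-moves to occur last one sees that they remain admissible throughout, provided the target satisfies $\chi^- \leq 2g - 2\apply{\NumberOfNonsphericalComponents}{\HomologyClass}$, which is precisely the condition preventing the conditional move from being obstructed at intermediate stages.

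The main obstacle is that the point $(\chi^-, g)$ a priori only satisfies the weaker bound $\chi^- \leq 2g - 2\apply{\NumberOfNonsphericalComponents}{\apply{\HomologyOfSpaceMorphism{\ContinuousMap}{2}}{\HomologyClass}}$ inherited from $\TopologicalSpace_{2}$. To close this gap I plan to prove the equality $\apply{\NumberOfNonsphericalComponents}{\HomologyClass} = \apply{\NumberOfNonsphericalComponents}{\apply{\HomologyOfSpaceMorphism{\ContinuousMap}{2}}{\HomologyClass}}$ under the path-connectedness hypothesis, where Lemma~\ref{lem:EssentialSupport} already confines both values to $\{0, 1\}$. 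Only one implication requires work: if the image class is representable by spheres, i.e.\ sits at coordinates $(0,0)$, then minimizer surjectivity combined with monotonicity lifts this to a representative of $\HomologyClass$ with coordinates $\leq (0,0)$, forcing $\HomologyClass$ itself to be representable by spheres. In the resulting spherical case both attainable sets coincide with the universal $\AttainableSet{\TopologicalSpace}{0}$, and the non-spherical case is then finished by the inflation argument above.
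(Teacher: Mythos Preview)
Your proposal is correct and follows essentially the same route as the paper: postcomposition for the easy inclusion, then minimizer surjectivity to obtain a componentwise-smaller point in $\AttainableSet{\TopologicalSpace_{1}}{\HomologyClass}$, followed by the saturation moves of Lemma~\ref{lem:AttainableSetSaturation} and Proposition~\ref{prp:AttainableSetBounds}\,(\ref{itm:ConditionalSaturation}) to reach the target. Your case split on whether the class itself is spherical (establishing $\apply{\NumberOfNonsphericalComponents}{\HomologyClass} = \apply{\NumberOfNonsphericalComponents}{\apply{\HomologyOfSpaceMorphism{\ContinuousMap}{2}}{\HomologyClass}}$) is in fact slightly cleaner than the paper's split on whether the target point equals $(0,0)$, but the arguments are otherwise the same.
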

\begin{proof}
  Since postcomposing a representative of $\HomologyClass$ by $\ContinuousMap$
  yields a representative of
  $\apply{\HomologyOfSpaceMorphism{\ContinuousMap}{2}}{\HomologyClass}$, one
  has
  $
  \AttainableSet{\TopologicalSpace_{1}}{\HomologyClass}
  \subset
  \AttainableSet{\TopologicalSpace_{2}}%
    {\apply{\HomologyOfSpaceMorphism{\ContinuousMap}{2}}{\HomologyClass}}
  $.

  Now fix a point
  $
    \left(\ChiMinus_{2},\Genus_{2}\right)
    \in
    \AttainableSet{\TopologicalSpace_{2}}%
      {\apply{\HomologyOfSpaceMorphism{\ContinuousMap}{2}}{\HomologyClass}}
  $%
  .
  If $\ChiMinus_{2}=2\Genus_{2}=0$, then
  $
    \apply{\HomologyOfSpaceMorphism{\ContinuousMap}{2}}{\HomologyClass}
  $
  is represented by a sphere and, because $\ContinuousMap$ is minimizer
  surjective, $\HomologyClass$ is also represented by a sphere,
  hence
  $
    \left(0,0\right)
    \in
    \AttainableSet{\TopologicalSpace_{1}}{\HomologyClass}
  $%
  .
  Otherwise,
  $
    \apply{\NumberOfNonsphericalComponents}%
      {\apply{\HomologyOfSpaceMorphism{\ContinuousMap}{2}}{\HomologyClass}}
    =1
  $%
  , which implies
  $\ChiMinus_{2} \leq 2 \Genus_{2} - 2$.
  Since $\ContinuousMap$ is minimizer surjective, there exists a point
  $
    \left(\ChiMinus_{1},\Genus_{1}\right)
    \in
    \AttainableSet{\TopologicalSpace_{1}}{\HomologyClass}
  $
  such that
  $\ChiMinus_{2}\geq \ChiMinus_{1}$
  and
  $\Genus_{2}\geq \Genus_{1}$.
  Lemma~\ref{lem:AttainableSetSaturation} implies that
  $
    \left(\ChiMinus_{1},\Genus_{2}\right)
    \in
    \AttainableSet{\TopologicalSpace_{1}}{\HomologyClass}
  $%
  , and Proposition~\ref{prp:AttainableSetBounds}~(\ref{itm:ConditionalSaturation})
  together with
  $\ChiMinus_{2} \leq 2 \Genus_{2} - 2$
  and
  $
    \apply{\NumberOfNonsphericalComponents}%
    {\HomologyClass}
    \leq
    1
  $
  implies that
  $
    \left(\ChiMinus_{2},\Genus_{2}\right)
    \in
    \AttainableSet{\TopologicalSpace_{1}}{\HomologyClass}
  $.
\end{proof}

\begin{proposition}%
[Canonical Map to $\EMSpace{\HomotopyGroupOfObject{X}{x}{1}}{1}$
is minimizer surjective]%
\label{prp:MinimizerSurjectiveFundamentalGroup}
  For a path-connected pointed CW-complex
  $\left(\TopologicalSpace,x\right)$
  the canonical inclusion
  $
    \iota
    \colon
    \TopologicalSpace
    \to
    \EMSpace{\HomotopyGroupOfObject{X}{x}{1}}{1}
  $
  is minimizer surjective.
\end{proposition}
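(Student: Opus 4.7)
The plan is to verify the two defining conditions of minimizer surjectivity in turn; write $K := K(\pi_1(X,x),1)$ for brevity. For condition (a), $\iota$ is constructed by attaching to $X$ only cells of dimension at least $3$, so $H_2(K,X)=0$ and the long exact sequence of the pair $(K,X)$ forces $\iota_\ast\colon H_2(X)\to H_2(K)$ to be surjective---this is the argument already exploited in the proof of Lemma~\ref{lem:CokernelGroupHomology}.

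For condition (b), fix $\alpha\in H_2(X)$ and a representative $f_2\colon \Sigma_2\to K$ of $\iota_\ast\alpha$. The first step is cellular approximation: since $K^{(2)}=X^{(2)}$ in the canonical CW structure, one finds a map $\tilde f\colon \Sigma_2\to X$ with $\iota\circ\tilde f\simeq f_2$ in $K$. Setting $\alpha':=\tilde f_\ast[\Sigma_2]$, we have $\iota_\ast\alpha'=\iota_\ast\alpha$, so Lemma~\ref{lem:CokernelGroupHomology} gives $\alpha-\alpha'=h_2(\beta)$ for some $\beta\in\pi_2(X,x)$, which is represented by a map $g\colon S^2\to X$. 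Define $\Sigma_1:=\Sigma_2\sqcup S^2$ and $f_1:=\tilde f\sqcup g$; by construction $f_1\colon \Sigma_1\to X$ represents $\alpha$.

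The remaining task is to establish $(\Sigma_1,\iota\circ f_1)\CompressionSmallerEqual(\Sigma_2,f_2)$. Because $\pi_2(K)=0$, the sphere map $\iota\circ g$ is null-homotopic in $K$, so at the level of the preorder the pair $(\Sigma_1,\iota\circ f_1)$ coincides with $(\Sigma_2\sqcup S^2, f_2\sqcup\mathrm{const}_y)$ for any chosen basepoint $y\in K$. The plan is then to produce this latter pair from $(\Sigma_2,f_2)$ by a single move of type (II): pick $p\in\Sigma_2$ with $f_2(p)=y$, homotope $f_2$ to be constant $y$ on a small embedded disc $D$ around $p$, and take the compression annulus to be a collar of $\partial D$ inside $D$. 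Its core curve bounds the smaller disc $D'\subset D$, hence is inessential in $\Sigma_2$, and the constant-$y$ map supplies the required extension $D^2\times D^1\to K$. Compressing at this annulus caps $D'$ off into a sphere that maps constantly to $y$ and reassembles the exterior into a surface diffeomorphic to $\Sigma_2$ whose map is homotopic to $f_2$, yielding exactly the desired pair. I expect the main obstacle to be this explicit ``bubble-off'' construction---checking that the resulting compression has the correct global topology and that the extension data is consistent with the preexisting homotopy; once it is in place, the argument reduces to routine cellular approximation combined with the Hopf exact sequence of Lemma~\ref{lem:CokernelGroupHomology}.
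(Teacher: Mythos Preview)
Your argument is correct and shares the paper's core idea: since $K^{(2)}=X^{(2)}$, cellular approximation lifts any surface map $f_2\colon\Sigma_2\to K$ to $\tilde f\colon\Sigma_2\to X$ with $\iota\circ\tilde f\simeq f_2$. The paper's two-sentence proof stops at this point, whereas you go on to address a detail the paper elides: the lift $\tilde f$ represents some preimage $\alpha'$ of $\iota_\ast\alpha$, not necessarily the prescribed class $\alpha$ required by condition~(b) of the definition. Your correction---adjoining a sphere $g\colon S^2\to X$ realizing $\alpha-\alpha'\in\operatorname{im}h_2$, then observing that $\iota\circ g$ is null-homotopic in $K$ so that the extra component can be produced from $(\Sigma_2,f_2)$ by a single inessential compression---is exactly what is needed to verify condition~(b) literally. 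So your proof is the paper's argument with this gap filled in; the added work buys you a rigorous match with the stated definition of minimizer surjectivity, at the cost of the bubble-off bookkeeping you flag as the main obstacle (which you handle correctly).
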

\begin{proof}
  The space $\TopologicalSpace$ includes into
  $\EMSpace{\HomotopyGroupOfObject{X}{x}{1}}{1}$
  such that the $2$-skeleton of $\TopologicalSpace$
  agrees with the $2$-skeleton of
  $\EMSpace{\HomotopyGroupOfObject{X}{x}{1}}{1}$.
  The cellular approximation theorem implies
  that we can lift every map from a surface to
  $\EMSpace{\HomotopyGroupOfObject{X}{x}{1}}{1}$
  along $\iota$ up to homotopy.
\end{proof}

\begin{corollary}[Attainable set determined by fundamental group]%
\label{crl:AttainableSetFundamentalGroup}
  Given two path-connected pointed CW-complexes
  $\left(\TopologicalSpace_{1}, x_{1}\right)$,
  $\left(\TopologicalSpace_{2}, x_{2}\right)$,
  and an isomorphism
  $
    f
    \colon
    \HomotopyGroupOfObject{\TopologicalSpace_{1}}{x_{1}}{1}
    \stackrel{\sim}{\to}
    \HomotopyGroupOfObject{\TopologicalSpace_{2}}{x_{2}}{1}
  $,
  the induced isomorphism
  \[
    \HomologyOfGroupObject%
      {\HomotopyGroupOfObject{\TopologicalSpace_{1}}{x_{1}}{1}}
      {2}
    \stackrel%
      {\HomologyOfGroupMorphism{f}{2}}
      {\to}
    \HomologyOfGroupObject%
      {\HomotopyGroupOfObject{\TopologicalSpace_{2}}{x_{2}}{2}}
      {2}
  \]
  preserves attainable sets, i.e., for every
  $
    \HomologyClass
    \in
    \HomologyOfGroupObject%
      {\HomotopyGroupOfObject{\TopologicalSpace_{1}}{x_{1}}{1}}
      {2}
  $,
  one has
  $
    \AttainableSet{\TopologicalSpace_{1}}{\HomologyClass}
    =
    \AttainableSet{\TopologicalSpace_{2}}%
      {\apply{\HomologyOfGroupMorphism{f}{2}}{\HomologyClass}}
  $.
\end{corollary}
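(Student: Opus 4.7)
The plan is to reduce the corollary to the case where $\TopologicalSpace_1$ and $\TopologicalSpace_2$ are Eilenberg-MacLane spaces, where the statement becomes the fact that a group isomorphism lifts to a homotopy equivalence of classifying spaces.

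First, by Proposition~\ref{prp:MinimizerSurjectiveFundamentalGroup} the canonical inclusions $\iota_i \colon \TopologicalSpace_i \to \EMSpace{\HomotopyGroupOfObject{\TopologicalSpace_i}{x_i}{1}}{1}$ are minimizer surjective. Applying Lemma~\ref{lem:MinimizerSurjectiveAttainableSet} to each $\iota_i$ gives, for any preimage $\widetilde{\HomologyClass}_i \in \HomologyOfSpaceObject{\TopologicalSpace_i}{2}$ of a class in group homology,
\[
  \AttainableSet{\TopologicalSpace_i}{\widetilde{\HomologyClass}_i}
  =
  \AttainableSet
    {\EMSpace{\HomotopyGroupOfObject{\TopologicalSpace_i}{x_i}{1}}{1}}
    {\apply{\HomologyOfSpaceMorphism{\iota_i}{2}}{\widetilde{\HomologyClass}_i}}.
\]
Combined with the fact (following Lemma~\ref{lem:AttainableSetDescendsToCokernel}) that attainable sets are well defined on the group homology image, this reduces the corollary to proving the statement for $\TopologicalSpace_i = \EMSpace{\HomotopyGroupOfObject{\TopologicalSpace_i}{x_i}{1}}{1}$.

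So let $\TopologicalSpace_i = \EMSpace{G_i}{1}$ and let $f \colon G_1 \to G_2$ be a group isomorphism. Standard obstruction theory (or equivalently the universal property of $\EMSpace{-}{1}$-spaces) provides a pointed map $F \colon \EMSpace{G_1}{1} \to \EMSpace{G_2}{1}$ inducing $f$ on $\pi_1$, and applying the same to $f^{-1}$ and passing to homotopy yields a homotopy inverse; thus $F$ is a homotopy equivalence. Given any representative $\ContinuousMap_2 \colon \Surface \to \EMSpace{G_2}{1}$ of $\apply{\HomologyOfSpaceMorphism{F}{2}}{\HomologyClass}$, the composition $F^{-1} \circ \ContinuousMap_2 \colon \Surface \to \EMSpace{G_1}{1}$ uses the same surface and represents $\HomologyClass$; this gives the inclusion $\AttainableSet{\EMSpace{G_2}{1}}{\apply{\HomologyOfSpaceMorphism{F}{2}}{\HomologyClass}} \subseteq \AttainableSet{\EMSpace{G_1}{1}}{\HomologyClass}$. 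The reverse inclusion follows symmetrically by composing with $F$. Finally, the induced map $\HomologyOfSpaceMorphism{F}{2}$ agrees with $\HomologyOfGroupMorphism{f}{2}$ under the canonical identification $\HomologyOfSpaceObject{\EMSpace{G_i}{1}}{2} \cong \HomologyOfGroupObject{G_i}{2}$, completing the proof.

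There is no substantial obstacle in this argument: the corollary is a bookkeeping consequence of the two previously established facts, namely that $\iota \colon \TopologicalSpace \to \EMSpace{\HomotopyGroupOfObject{\TopologicalSpace}{x}{1}}{1}$ is minimizer surjective, and that Eilenberg-MacLane spaces classify their fundamental groups up to homotopy equivalence. The only mild care needed is to verify that the homotopy equivalence $F$ constructed via obstruction theory indeed realizes the prescribed isomorphism $f$ on $\pi_1$ and hence $\HomologyOfGroupMorphism{f}{2}$ on $H_2$; this follows immediately from the construction.
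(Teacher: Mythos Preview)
Your proof is correct and is exactly the argument the paper has in mind: the corollary is stated without proof immediately after Proposition~\ref{prp:MinimizerSurjectiveFundamentalGroup} and Lemma~\ref{lem:MinimizerSurjectiveAttainableSet}, and your write-up simply spells out the intended deduction via the minimizer-surjective inclusions $\iota_i$ and the homotopy equivalence of $K(G,1)$-spaces realizing $f$.
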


Because every group can be realized as the fundamental group of a $2$-complex,
this implies that the task of computing attainable sets can be reduced to the
case of $2$-complexes.
\begin{example}[Hypercube]%
\label{exm:nTorus}
  Consider $\apply{K}{\Integers^{n},1} = {\left(\Circle\right)}^{n}$
  and let  $\TopologicalSpace$ be its $2$-skeleton
  (in the product cell structure where $\Circle$
  has exactly one $0$-cell and one $1$-cell).
  Then the inclusion of $\TopologicalSpace$ into
  $\apply{K}{\Integers^{n},1}$ is minimizer surjective.
  In this case, we can say a bit more about the minimal genus.
  There are canonical isomorphisms
  $
    \HomologyOfSpaceObject{\TopologicalSpace}{1}
    \cong
    \Integers^{n}
  $
  and
  $
    \HomologyOfSpaceObject{\TopologicalSpace}{2}
    \cong
    \bigwedge^{2} \Integers^{n}
  $.
  Because $\HomotopyGroupOfObject{\TopologicalSpace}{x}{1}$ is abelian, any
  surface $\Surface$ representing a homology class $\HomologyClass$ can
  be compressed to a disjoint union of tori with the same genus.
  Hence $\apply{\Genus}{\HomologyClass}$ is the minimal number of summands
  in a decomposition of $\HomologyClass$ into elementary wedges:
  \[
    \HomologyClass
    =
    \sum_{i=1}^{\apply{\Genus}{\HomologyClass}}
      v_{i}\wedge w_{i}
    , \quad
    v_{i}, w_{i} \in \Integers^{n}
  \]
\end{example}

The following proposition is an easy corollary from
Corollary~\ref{crl:CellwiseCovering}
\begin{proposition}[Free Product is Minimizer Surjective]
  The quotient map
  $
    \EMSpace{\Group_{1}}{1} \sqcup \EMSpace{\Group_{2}}{1}
    \to
    \EMSpace{\Group_{1} \ast \Group_{2}}{1}
  $
  is minimizer-surjective.
\end{proposition}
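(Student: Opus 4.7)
The plan is to apply Corollary~\ref{crl:CellwiseCovering} (a consequence of Theorem~\ref{thm:NormalForm}) to replace any representative by a compression-equivalent cellwise covering without folds of the $2$-skeleton of $\EMSpace{G_1 \ast G_2}{1}$. This $2$-skeleton can be modeled as $X_1 \vee X_2$, where $X_i$ is the $2$-skeleton of $\EMSpace{G_i}{1}$, and the key geometric point will be that such a covering cannot mix the two wedge summands.

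I would begin with condition (a), homological surjectivity. Applying Mayer--Vietoris to $X_1 \vee X_2$ (or performing a direct cellular-chain computation) shows that the inclusion $X_1 \sqcup X_2 \hookrightarrow X_1 \vee X_2$ induces an isomorphism on $H_2$. Since $\HomologyOfSpaceObject{X_i}{2}$ surjects onto $\HomologyOfGroupObject{G_i}{2}$ by Lemma~\ref{lem:CokernelGroupHomology}, and $\HomologyOfGroupObject{G_1 \ast G_2}{2}$ is the corresponding cokernel for $X_1 \vee X_2$, condition (a) follows.

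For condition (b), take a representative $(\Surface_2, \ContinuousMap_2)$ of some class in $\HomologyOfSpaceObject{\EMSpace{G_1 \ast G_2}{1}}{2}$ and use cellular approximation to push $\ContinuousMap_2$ into $X = X_1 \vee X_2$. Applying Corollary~\ref{crl:CellwiseCovering} yields a representative $(\Surface', \ContinuousMap')$ with $\ContinuousMap'$ a cellwise covering without folds and $(\Surface', \ContinuousMap') \CompressionSmallerEqual (\Surface_2, \ContinuousMap_2)$. The main combinatorial step is this: at any $0$-cell $v \in \Surface'$ with $\ContinuousMap'(v) = \ast$ (the wedge point), the link of $v$ in $\Surface'$ is a topological circle, while the link of $\ast$ in $X$ is the \emph{disjoint} union $\mathrm{lk}_{X_1}(\ast) \sqcup \mathrm{lk}_{X_2}(\ast)$. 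Because $\ContinuousMap'$ has no folds, the induced map on links is locally injective, so the connected circle lands in exactly one component. Consequently every cell of $\Surface'$ incident to $v$ maps into a single $X_i$, and propagating this dichotomy through cell-adjacency shows that each connected component of $\Surface'$ maps entirely into either $X_1$ or $X_2$.

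Given this dichotomy, $\ContinuousMap'$ factors uniquely as $q \circ \ContinuousMap_1$, where $q$ is the quotient map and $\ContinuousMap_1 \colon \Surface' \to \EMSpace{G_1}{1} \sqcup \EMSpace{G_2}{1}$ distributes the components into the appropriate summands. Then $(\Surface', q \circ \ContinuousMap_1) = (\Surface', \ContinuousMap') \CompressionSmallerEqual (\Surface_2, \ContinuousMap_2)$ by construction, which is the required lift. I expect the main obstacle to be the link-based argument: its validity depends on unpacking the precise definitions of ``cellwise covering'' and ``without folds'' from Section~\ref{scn:TwoComplexes}, but once the map on each link graph is known to be a local injection, the disconnectedness of the wedge-point's link in $X_1 \vee X_2$ does all the work.
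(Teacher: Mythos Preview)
Your argument is correct and is exactly the route the paper intends: the proposition is stated there only as ``an easy corollary from Corollary~\ref{crl:CellwiseCovering}'', and your reduction to a cellwise covering of $X_1\vee X_2$ followed by separating components at the wedge point is precisely that corollary unpacked. Two small points left implicit: the lift $\ContinuousMap_1$ represents the \emph{given} $\alpha$ (not just some preimage of $q_*\alpha$) because $q_*$ is in fact an isomorphism on $H_2$ via $H_2(G_1\ast G_2)\cong H_2(G_1)\oplus H_2(G_2)$; and the separation step only needs that $\ContinuousMap'^{-1}(\ast)$ is finite (the cellwise-covering property makes the complement of this finite set connected in each component of $\Surface'$), so the no-folds clause is not actually required there.
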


\section{Generalized 2-Complexes}
\label{scn:TwoComplexes}
In this section we generalize the definition of 2-complexes
by allowing the cells to have nonzero genus.
Technically, this is not necessary for statements about computability,
as all subsequent statement hold for usual 2-complexes,
and any generalized 2-complex can be subdivided to be a usual 2-complex.
However, this subdivision increases the number of cells significantly
and complicates the subsequent computations for given spaces one might be
interested in.
Hence we introduce this notion to allow for concrete computations that
could not be handled without the use of a computer otherwise.

\begin{definition}[Manifold Models]%
\label{dfn:ManifoldModels}
  For each dimension $0,1,2$, we fix models for each diffeomorphism
  type of compact, connected manifolds, which are either a point or have
  non-empty boundary:
  \[
    \ManifoldModelsOfDimension{0} = \{ \Point \}
    \quad
    \text{a point}
  \]
  \[
    \ManifoldModelsOfDimension{1} = \{ \Interval \}
    \quad
    \text{an interval}
  \]
  \[
    \ManifoldModelsOfDimension{2} =
      \{
        \SurfaceOfGBO{\Genus}
                      {\NumberBoundaryComponents}
                      {\OrientationIndicator}
          \mid b \geq 1
      \}
      \xrightarrow[(\Genus,\NumberBoundaryComponents,\OrientationIndicator)]%
                  {\sim}
      \mathbb{N}_{0} \times \mathbb{N}_{\geq 1} \times
      \{ \Orientable, \NotOrientable \}.
  \]
  where
  $\SurfaceOfGBO{\Genus}{\NumberBoundaryComponents}{\OrientationIndicator}$
  denotes a compact surface of genus $\Genus$ with
  $\NumberBoundaryComponents$ boundary components and $\OrientationIndicator$
  indicating whether it is orientable.

  Additionally, we fix
  \begin{itemize}
    \item
      for each orientable model
      $
        \Manifold \in
        \left\{
          \Point, \Interval,
          {\left(
            \SurfaceOfGBO{\Genus}{\NumberBoundaryComponents}{\Orientable}
          \right)}_{\Genus,\NumberBoundaryComponents}
        \right\}
      $
      an orientation,
    \item
      for each model $\Manifold$ a closed collar
      $%
        \CollarMap{\Manifold} \colon
        \partial \Manifold \times \left[ 0,1 \right] \to
        \Manifold
      $.
  \end{itemize}
\end{definition}

\begin{definition}[Generalized 2-Complex]
\label{dfn:Generalized2Complex}
  A \introduce{generalized 2-complex} $\GeneralizedComplex$
  is given by the following data:
  \begin{enumerate}[(i)]
    \item
      Three sets
      $\CellsOfDim{\GeneralizedComplex}{0}$,
      $\CellsOfDim{\GeneralizedComplex}{1}$,
      $\CellsOfDim{\GeneralizedComplex}{2}$,
      whose elements are called \introduce{cells} of dimension
      $0$, $1$, $2$, respectively,
    \item
      maps $M$ indicating the \introduce{cell type}
      \[
        \CellType \colon
          \CellsOfDim{\GeneralizedComplex}{0} \to
          \ManifoldModelsOfDimension{0}, \\
        \CellType \colon
          \CellsOfDim{\GeneralizedComplex}{1} \to
          \ManifoldModelsOfDimension{1}, \\
        \CellType \colon
          \CellsOfDim{\GeneralizedComplex}{2} \to
          \ManifoldModelsOfDimension{2},
      \]
    \item
      for each cell
      $\Cell \in \CellsOfDim{\GeneralizedComplex}{\Dimension}$,
      a (continuous) \introduce{gluing map} to the previous skeleton
      (see~(\ref{itm:Generalized2ComplexSkeleton}))
      \[
        \GluingMap{\Cell} \colon
        \partial \CellType \left( \Cell \right) \to
        \SkeletonOfDim{\GeneralizedComplex}{\Dimension - 1},
      \]
    \item\label{itm:Generalized2ComplexSkeleton}
      for each dimension $\Dimension \in \{-1,0,1,2\}$,
      a \introduce{skeleton}
      $\SkeletonOfDim{\GeneralizedComplex}{\Dimension}$
      which is, starting with
      $\SkeletonOfDim{\GeneralizedComplex}{-1} = \emptyset$,
      an inductive choice of a pushout (in the category of topological spaces)
      for the following diagram:
      \begin{displaymath}
      \begin{tikzcd}
        \coprod_{\Cell \in \CellsOfDim{\GeneralizedComplex}{\Dimension}}
          \partial \CellType \left( \Cell \right)
        \arrow{d}
        \arrow[rr,"{
          \coprod_{\Cell \in \CellsOfDim{\GeneralizedComplex}{\Dimension}}
          \GluingMap{\Cell}}"]
        &&
        \SkeletonOfDim{\GeneralizedComplex}{\Dimension - 1}
        \arrow[d,dotted]
        \\
        \coprod_{\Cell \in \CellsOfDim{\GeneralizedComplex}{\Dimension}}
          \CellType \left( \Cell \right)
        \arrow[rr,dotted,"
          \coprod_{\Cell \in \CellsOfDim{\GeneralizedComplex}{\Dimension}}%
          \CellInclusion{\Cell}%
        "]
        &&
        \SkeletonOfDim{\GeneralizedComplex}{\Dimension}
      \end{tikzcd}
      \end{displaymath}
  \end{enumerate}
  We call $\GeometricRealization{\GeneralizedComplex} =
  \SkeletonOfDim{\GeneralizedComplex}{2}$ the \introduce{geometric realization}.
  The components $\CellInclusion{\Cell}$ of the horizontal dotted arrow are
  called \introduce{cell inclusions}. They are injective on the interior and
  hence define a smooth structure on the image of the interior.
  One can always choose the pushout in such a way that the vertical dotted
  arrow is an inclusion on the underlying sets, we will silently assume
  this whenever it is convenient.
\end{definition}

\begin{lemma}[Homology of a Generalized 2-Complex]
\label{lem:HomologyGeneralized2Complex}
The second homology group of a generalized 2-complex
$\GeneralizedComplex$ can be computed as
\[
  \HomologyOfSpaceObject{\GeometricRealization{\GeneralizedComplex}}{2}
  =
  \kernel{%
    \bigoplus_{\substack{%
        \Cell \in \CellsOfDim{\GeneralizedComplex}{2},\\%
        \Cell \mathrm{\ orientable}}%
      }%
      \HomologyOfSpacePairObject%
        {\CellType\left(\Cell\right)}%
        {\partial\CellType\left(\Cell\right)}%
        {2}
    \rightarrow
    \HomologyOfSpaceObject{\SkeletonOfDim{\GeneralizedComplex}{1}}{1}%
  }.
\]
In particular, if $\iota_O\colon \OrientableComplex \subseteq \GeneralizedComplex$ denotes
the subcomplex of orientable cells, the inclusion
$
  \GeometricRealization{\OrientableComplex}
  \rightarrow
  \GeometricRealization{\GeneralizedComplex}
$
induces an isomorphism
$
  \HomologyOfSpaceObject{\GeometricRealization{\OrientableComplex}}{2}
  \stackrel{\sim}{\rightarrow}
  \HomologyOfSpaceObject{\GeometricRealization{\GeneralizedComplex}}{2}
$.
\end{lemma}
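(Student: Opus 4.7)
The plan is to apply the long exact sequence of the pair $\left(\GeometricRealization{\GeneralizedComplex}, \SkeletonOfDim{\GeneralizedComplex}{1}\right)$. Since $\SkeletonOfDim{\GeneralizedComplex}{1}$ is a genuine 1-dimensional CW-complex, $\HomologyOfSpaceObject{\SkeletonOfDim{\GeneralizedComplex}{1}}{2}=0$, and the sequence gives a short exact piece
\[
  0
  \to
  \HomologyOfSpaceObject{\GeometricRealization{\GeneralizedComplex}}{2}
  \to
  \HomologyOfSpacePairObject
    {\GeometricRealization{\GeneralizedComplex}}
    {\SkeletonOfDim{\GeneralizedComplex}{1}}{2}
  \xrightarrow{\partial}
  \HomologyOfSpaceObject{\SkeletonOfDim{\GeneralizedComplex}{1}}{1}.
\]
Thus $\HomologyOfSpaceObject{\GeometricRealization{\GeneralizedComplex}}{2}$ is identified with the kernel of $\partial$, and it remains to compute the middle term.

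To do so, I would use excision. The cell inclusions $\CellInclusion{\Cell} \colon \CellType\left(\Cell\right) \to \GeometricRealization{\GeneralizedComplex}$ are homeomorphisms on the interiors, and the fixed collars $\CollarMap{\CellType(\Cell)}$ from Definition~\ref{dfn:ManifoldModels} provide the standard neighborhood setup that makes the pair $\left(\GeometricRealization{\GeneralizedComplex}, \SkeletonOfDim{\GeneralizedComplex}{1}\right)$ a good pair. Excising the interiors of all $2$-cells and applying the sum axiom yields
\[
  \HomologyOfSpacePairObject
    {\GeometricRealization{\GeneralizedComplex}}
    {\SkeletonOfDim{\GeneralizedComplex}{1}}{2}
  \cong
  \bigoplus_{\Cell \in \CellsOfDim{\GeneralizedComplex}{2}}
    \HomologyOfSpacePairObject
      {\CellType\left(\Cell\right)}
      {\partial \CellType\left(\Cell\right)}{2}.
\]
For each surface model, Poincaré-Lefschetz duality gives $\HomologyOfSpacePairObject{\SurfaceOfGBO{\Genus}{\NumberBoundaryComponents}{\Orientable}}{\partial \SurfaceOfGBO{\Genus}{\NumberBoundaryComponents}{\Orientable}}{2} \cong \Integers$ (generated by the fundamental class relative to the boundary), while the non-orientable case gives zero since no integral fundamental class exists. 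Hence the direct sum collapses to the sum over orientable cells, and the formula follows.

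For the second statement, observe that $\OrientableComplex$ has the same $0$- and $1$-skeleton as $\GeneralizedComplex$ (since by Definition~\ref{dfn:ManifoldModels} every $0$- and $1$-cell model is orientable) and contains exactly the orientable $2$-cells. Running the argument above for $\OrientableComplex$ produces the same kernel description, and naturality of the long exact sequence with respect to the inclusion $\OrientableComplex \hookrightarrow \GeneralizedComplex$ shows that the induced map on $H_{2}$ is exactly the identity between these two kernels.

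The main obstacle I anticipate is making excision rigorous in the generalized setting: one must verify that the cells together with their collars give a good-pair structure analogous to ordinary CW complexes, despite the $2$-cells being surfaces with arbitrary genus rather than disks. Once this bookkeeping is done, the rest of the argument is purely homological.
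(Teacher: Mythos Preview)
Your proposal is correct and follows essentially the same approach as the paper: the long exact sequence of the pair $(\GeometricRealization{\GeneralizedComplex},\SkeletonOfDim{\GeneralizedComplex}{1})$, the excision isomorphism $\HomologyOfSpacePairObject{\GeometricRealization{\GeneralizedComplex}}{\SkeletonOfDim{\GeneralizedComplex}{1}}{2}\cong\bigoplus_{\Cell}\HomologyOfSpacePairObject{\CellType(\Cell)}{\partial\CellType(\Cell)}{2}$, and the vanishing of the non-orientable summands. Your worry about making excision rigorous is unfounded --- the pushout description of the skeleta together with the fixed collars makes this routine, and the paper treats it as such.
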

\begin{proof}
This follows from the long exact sequence for the pair
$%
  (%
    \GeometricRealization{\GeneralizedComplex},%
    \SkeletonOfDim{\GeneralizedComplex}{1}%
  )%
$,
the isomorphism
$
  \HomologyOfSpacePairObject%
    {\GeometricRealization{\GeneralizedComplex}}%
    {\SkeletonOfDim{\GeneralizedComplex}{1}}%
    {2}
  \stackrel{\sim}{\rightarrow}
  \bigoplus_{\Cell \in \CellsOfDim{\GeneralizedComplex}{2},\\%
  }%
  \HomologyOfSpacePairObject%
    {\CellType\left(\Cell\right)}%
    {\partial\CellType\left(\Cell\right)}%
    {2}
$,
and the fact that
$
  \HomologyOfSpacePairObject%
    {\CellType\left(\Cell\right)}%
    {\partial\CellType\left(\Cell\right)}%
    {2}
$
vanishes for non-orientable cells $\Cell$.
\end{proof}

\section{Reduction to Cellwise Coverings}
\label{scn:CellwiseCovering}
In this section we prove Theorem~\ref{thm:NormalForm} which extends a theorem
of Edmonds (Theorem 1.1 of \cite{EdmondsBranchedCoverings}, see also
\cite{SkoraDegree} for a version we will use later)
from maps between surfaces to maps from surfaces to generalized 2-complexes.
The theorem of Edmonds states that every map of non-zero degree between closed,
connected surfaces factors up to homotopy as a composition of a pinch map and a
branched covering.
This theorem completely solves the description of the attainable set of
surfaces:
Minimizers do not allow pinches and computing $\Genus$ and $\ChiMinus$ for
branched coverings shows that minimizers have to be non-branched coverings.
An equivalent formulation, which emphasizes its application to the minimizer
problem, is that every map of non-zero degree between closed, connected
surfaces which admits no pinches is homotopic to a branched covering.
Speaking precisely, it is this formulation that we extend in
Theorem~\ref{thm:NormalForm}.

Let us define what a pinch is.
We will also need the related definition of a squeeze (both can be found in
\cite{SkoraDegree}):
\begin{definition}[Squeeze and Pinch]
Let $\TopologicalSpace$ denote a topological space.  A continuous map
  $
    \ContinuousMap
    \colon
    \Surface
    \to
    \TopologicalSpace
  $
  \introduce{admits a squeeze}
  if there exists a non-nullhomotopic simple closed curve in
  $\Surface$ such that $\ContinuousMap$ maps this curve to a nullhomotopic
  curve in $\TopologicalSpace$
  or if $\Surface$ is $\Sphere$ and $\ContinuousMap$ is nullhomotopic.
  It is said to \introduce{admit a pinch} if there exists a closed subsurface
  $\Surface'\subset \Surface$
  with a single boundary component that is not a $2$-disk and such that
  $\at{\ContinuousMap}{\Surface'}$ is nullhomotopic.
\end{definition}

If a map admits no squeezes, it also admits no pinches.
In Theorem~\ref{thm:NormalForm} we require the map to not admit squeezes,
which is a stronger requirement than the one in the cited theorem.
This stems from the fact that maps to generalized
$2$-complexes may possess complicated squeezes.
The reason for considering the equivalence class the compression order instead
of homotopy classes as in the main theorem of \cite{EdmondsBranchedCoverings}
is more subtle and stems from the presence of complicated second
homotopy groups for generalized $2$-complexes on the one
side and restrictions for branched coverings between surfaces
of genus $0$ on the other side.
The result suffices for studying compression order
minimizers, because they cannot admit squeezes and are only
defined in terms of equivalence classes.

In our case, the role of branched coverings in the theorem of Edmonds is taken
by cellwise coverings without folds.
If the target generalized $2$-complex is a surface, these notions are equivalent
for non-zero degree maps without squeezes, and the branch locus is within
the $0$-cells.
\begin{definition}[Cellwise Covering]
\label{dfn:CellwiseCovering}
  We call a map $f\colon \GeometricRealization{X}\to \GeometricRealization{Y}$
  between generalized $2$-complexes a \introduce{cellwise covering} if the
  preimage of every open cell consists of open cells of the same dimension and
  the restriction to this preimage is a covering.
  We call a map $f\colon \Surface \to \GeometricRealization{X}$ a
  \introduce{cellwise covering} if there exists a generalized $2$-complex
  $\mathcal{S}$ and a homeomorphism $\phi \colon
  \GeometricRealization{\mathcal{S}}\to \Surface$ such that $f\circ\phi$ is a
  cellwise covering.

  Furthermore we call a map $f\colon \Surface\to \GeometricRealization{X}$ a
  \introduce{cellwise covering without folds} if it is a cellwise covering and
  additionally the map $f\circ \phi$ is injective in a neighborhood of the inner
  of the $1$-cells of $\mathcal{S}$.
\end{definition}

For later use we need a version of the main result of
\cite{EdmondsBranchedCoverings} for surfaces with boundary
(again see \cite{SkoraDegree} for a strengthened version).
In order to state this we need some more definitions:
\begin{definition}[Geometric Degree, Allowable]
  A map between compact surfaces
  $
    \ContinuousMap
    \colon
    \Surface
    \to
    \Surface'
  $
  is called \introduce{proper} if
  $
    \apply{\ContinuousMap^{-1}}{\partial \Surface'}
    =
    \partial \Surface
  $.
  A homotopy $H$ between proper maps is called \introduce{proper}
  if
  $
    \apply{H^{-1}}{\partial \Surface'}
    =
    \partial \Surface \times I
  $.
  The \introduce{geometric degree} of a proper map
  $
    \ContinuousMap
    \colon
    \Surface
    \to
    \Surface'
  $
  denoted by $\apply{\GeometricDegree}{\ContinuousMap}$ is
  defined as the smallest
  natural number $d$ such that for some $2$-disk $\Ball{2} \subset \Surface'$
  there exists a map $\ContinuousMap'$ that is properly homotopic to
  $\ContinuousMap$ such
  that
  $
    \at{
      \ContinuousMap'
    }{
      \apply{\ContinuousMap'^{-1}}{D}
    }
  $
  is a $d$-fold covering.

  We call a proper map $\ContinuousMap\colon \Surface \to \Surface'$
  \introduce{allowable}
  if
  $
    \at{\ContinuousMap}{\partial \Surface}
  $
  is a $\apply{\GeometricDegree}{\ContinuousMap}$-fold
  covering.
\end{definition}

The relative version of Edmond's result in
\cite{SkoraDegree} states that every allowable map of
non-zero geometric degree between closed surfaces that admits
no pinches is homotopic relative to the boundary to a
branched covering.

\subsection{Block Decomposition}
The most complicated behavior of generalized $2$-complexes and maps
to them occurs in a neighborhood of their $1$-skeleton.
In this subsection, we give models for neighborhoods around points in
the $1$-skeleton (which we call blocks),
and in the following section we give a normal form result for maps
into these neighborhoods.
The block structure depends on the following definition:
\begin{definition}[Plasma, Membrane, Singular Set]%
\label{dfn:PlasmaMembraneSingularSet}
  For a $2$-dimensional model manifold $\CellType$
  as in Definition~\ref{dfn:ManifoldModels},
  we define the \introduce{plasma} $\CellType_{p}$ as the closure of the
  complement of the image of the closed collar,
  $
    \CellType_{p}
    =
    \overline{\CellType \setminus \operatorname{Im} \CollarMap{\CellType}}
   $,
  and the \introduce{membrane} $\CellType_{m}$
  as the boundary of the plasma,
  $
    \CellType_{m}
    =
    \partial \CellType_{p}
  $.

  For a cell $\Cell$ of a generalized $2$-complex $\GeneralizedComplex$,
  we denote by $\IncludedCell{\Cell}$ the image of the cell inclusion,
  by $\IncludedPlasma{\Cell}$ the image of the plasma and
  by $\IncludedMembrane{\Cell}$ the image of the membrane.

  We call the closure of the complement of
  $
    \bigcup_{
      \Cell
      \in
      \CellsOfDim{\GeneralizedComplex}{i+1}
    }
    \IncludedPlasma{\Cell}
  $
  in
  $
    \SkeletonOfDim{\GeneralizedComplex}{i+1}
  $
  the \introduce{$i$-singular set of $\GeneralizedComplex$}
  and denote it by $\apply{\SingularSet{i}}{\GeneralizedComplex}$.
  It is a closed neighborhood of $\SkeletonOfDim{\GeneralizedComplex}{i}$
  inside $\SkeletonOfDim{\GeneralizedComplex}{i+1}$.
\end{definition}

\begin{definition}[Block, Singular Set Projection]%
\label{dfn:BlockSingularSetProjection}
Let $\Cell$ denote a $2$-cell,
then we define the \introduce{$2$-block} $\Block{\Cell}$
to be $\IncludedPlasma{\Cell}$.
The closure of the complement of all $2$-blocks is the $1$-singular set
$
  \apply{\SingularSet{1}}{\GeneralizedComplex}
$.

For $i \in \left\{0,1\right\}$, there is a projection
$
  \SingularSetProjection{i}
  \colon
  \apply{\SingularSet{i}}{\GeneralizedComplex}
  \to
  \SkeletonOfDim{\GeneralizedComplex}{i}
$,
given by the union of the collar projections.
The left square in the following diagram is a pushout
($\pi_{\Cell}$ is the projection onto the first element):
\[
  \begin{tikzcd}
    \bigsqcup_{
      \Cell\in \CellsOfDim{\GeneralizedComplex}{i+1}
    }
    \partial \apply{\ManifoldModels}{\Cell}
    \ar[
        d,
      "\bigsqcup \GluingMap{\Cell}"
    ]
    \ar[
      r,
      "\times \{0\}"
    ]
    &
    \bigsqcup_{
      \Cell\in \CellsOfDim{\GeneralizedComplex}{i+1}
    }
    \partial
    \apply{\ManifoldModels}{\Cell}
    \times
    \Interval
    \ar[
        d,
      "\bigsqcup \CellInclusion{\Cell} \circ \CollarMap{\apply{\CellType}{\Cell}}"
    ]
    \ar[
      r,
      "{\left(\pi_{\Cell}\right)}_{\Cell}"
    ]
    &
    \bigsqcup_{
      \Cell\in \CellsOfDim{\GeneralizedComplex}{i+1}
    }
    \partial \apply{\ManifoldModels}{\Cell}
    \ar[
      d,
      "\bigsqcup \GluingMap{\Cell}"
    ]
    \\
    \SkeletonOfDim{\GeneralizedComplex}{i}
    \ar[r]
    &
    \apply{\SingularSet{i}}{\GeneralizedComplex}
    \ar[
      r,
      "\SingularSetProjection{i}",
      dotted
    ]
    &
    \SkeletonOfDim{\GeneralizedComplex}{i}
  \end{tikzcd}
\]
Therefore we can define the \introduce{singular set projection}
$\SingularSetProjection{i}$ as the pushout of
$
  \bigsqcup
  \GluingMap{\Cell}
  \circ
  \pi_{\Cell}
$
and the identity
$
  \SkeletonOfDim{\GeneralizedComplex}{i}
  \to
  \SkeletonOfDim{\GeneralizedComplex}{i}
$.
For every $\Cell \in \CellsOfDim{\GeneralizedComplex}{1}$
we define the \introduce{$1$-block} $\Block{\Cell}$ to be
$
  \apply%
  {\SingularSetProjection{1}^{-1}}
  {\IncludedPlasma{\Cell}}
$,
and for every
$\Cell \in \CellsOfDim{\GeneralizedComplex}{0}$
we define the \introduce{$0$-block} $\Block{\Cell}$ to be
$
  \apply{
    \left(
      \SingularSetProjection{0}
      \circ
      \SingularSetProjection{1}
    \right)
    ^{-1}
  }
  {\IncludedPlasma{\Cell}}
$.
Note that all blocks are closed,
$\GeneralizedComplex$ is the union of its blocks,
and blocks only intersect in their boundaries.
Two different $i$-blocks, for a fixed $i$, never intersect.
They define a tri-colored decomposition on $\GeneralizedComplex$.
\end{definition}

If the gluing maps of a generalized $2$-complex are too wild, they may force
maps $\Surface\to \GeometricRealization{\GeneralizedComplex}$ to always have
folds.
Hence we need the following definition:
\begin{definition}[Combinatorial Generalized 2-Complex]%
\label{dfn:CombinatorialGeneralized2Complex}
  We call a generalized 2-complex $\GeneralizedComplex$
  \introduce{combinatorial}, if for every $2$-cell $\Cell$ and every connected
  component $C$ of
  $\partial \apply{\CellType}{\Cell}$, one of the following holds:
  \begin{itemize}
      \item
        The restriction $\at{\GluingMap{\Cell}}{C}$ is constant and the image
        is a $0$-cell.
      \item
        The preimage of $\SkeletonOfDim{\GeneralizedComplex}{0}$ under
        $\at{\GluingMap{\Cell}}{C}$ is a finite union of points and
        $\at{\GluingMap{\Cell}}{C}$ maps every component of their complement
        diffeomorphically to an open $1$-cell.
  \end{itemize}
Here the smooth structure on the open $1$-cells stems from the smooth
structure of the underlying cells as explained in
Definition~\ref{dfn:Generalized2Complex}.
\end{definition}
\begin{remark}
  Since homotopic gluing maps give homotopy equivalent geometric realizations,
  every generalized $2$-complex is homotopy equivalent to a combinatorial one.
\end{remark}

\begin{definition}[Block Boundary Graph]%
\label{dfn:BlockBoundaryGraph}
  Let $\GeneralizedComplex$ be a combinatorial generalized $2$-complex.
  The union of all boundaries of blocks can be given a
  graph structure in the following way:
  \begin{itemize}
    \item
      The intersection of a $2$-block and a $1$-block
      is a disjoint union of closed intervals,
    \item
      the intersection of a $2$-block and a $0$-block
      is a disjoint union of closed intervals and circles, and
    \item
      the intersection of a $1$-block and a $0$-block is a disjoint union
      of stars,
      each of which is a graph with the middle point
      and the endpoints of the rays as vertices.
  \end{itemize}
  In total, this decomposes the union of all boundaries of blocks into
  closed intervals and circles which meet only in endpoints of intervals.
  We denote the set of intervals and circles by
  $\BoundaryGraphEdges{\GeneralizedComplex}$
  (each element is a subset of $\GeneralizedComplex$),
  and the set of endpoints of intervals by
  $\BoundaryGraphVertices{\GeneralizedComplex}$.
  Note that the interior of each element of
  $\BoundaryGraphEdges{\GeneralizedComplex}$ is smoothly embedded in a
  $2$-cell.
\end{definition}

Let us assume from here on forth that $\GeneralizedComplex$ is combinatorial.
Given a $1$-cell $\Cell$, the $1$-block belonging to $\Cell$ has
a standard form (See Figure~\ref{fig:Blocks}):
Let $n_{\Cell}$ denote the number of incoming $2$-cells counted with
multiplicity, i.e., the number of connected components of the preimage of
$\IncludedPlasma{\Cell}$ under $\bigsqcup \GluingMap{\Cell'}$, where $\Cell'$
ranges over all $2$-cells. Then $\Block{\Cell}$ is isomorphic to
\[
  \raisebox{.2em}{$
    \Interval
    \times
    \Interval
    \times
    \{1,\ldots,n\}
  $}
  /
    \raisebox{-.2em}{$
    \left(
      0,s,i
    \right)
    \sim
    \left(
      0,s,j
    \right)
  $}
\]
Here the first coordinate parameterizes the collar of the adjacent $2$-cells and
the second coordinate parameterizes $\IncludedPlasma{\Cell}$.

Similarly, given a $0$-cell $\Cell$, we can give a standard form for
its $0$-blocks (See Figure~\ref{fig:Blocks}):
Let $n_{\Cell}$ denote the number of essentially incoming boundaries of
$2$-cells i.e. the number of preimages of
$\IncludedPlasma{\Cell}=\IncludedCell{\Cell}$ under the restriction of all
gluing maps of $2$-cells to their essential components.
Let us silently identify $\{1,\ldots n_{\Cell}\}$ with this preimage.
Let $n^{0}_{\Cell}$ denote the number of boundaries of $2$-cells that get glued
inessentially to $\Cell$.
Again let us silently identify $\{1,\ldots,n^{0}_{\Cell}\}$ with the set of
those boundaries.
Analogously define $k_{\Cell}$ as the number of incoming $1$-cells counted with
multiplicity and let us silently identify $\{1,\ldots k_{\Cell}\}$ with the
incoming edges.
Note that for every boundary point in an essential component of a $2$-cell that
maps to $\Cell$ under the gluing map, a small neighborhood of this boundary
point maps to the incoming $1$-edges. Hence every such point specifies two
(possibly agreeing) elements of $\{1,\ldots,k_{\Cell}\}$.
By fixing an orientation of the boundary of all $2$-cells, we can order these
two elements and hence we get two maps (picking the first resp. the second
element)
\[
  s,t
  \colon
  \left\{
    1,\ldots, n_{\Cell}
  \right\}
  \to
  \left\{
    1,\ldots, k_{\Cell}
  \right\}
\]
Then the $0$-block belonging to $\Cell$ is homeomorphic to
\[
  \left.
    \raisebox{.4em}{$
      \begin{gathered}
        \Interval
        \times
        \Interval
        \times
        \left\{
          1,\ldots,n_{\Cell}
        \right\}
        \cup
        \Interval
        \times
        \left\{
          1,\ldots,k_{\Cell}
        \right\}
        \\
        \cup
        \
        \Interval
        \times
        \Circle
        \times
        \left\{
          1,\ldots,n^0_{\Cell}
        \right\}
      \end{gathered}
    $}
    \middle/
    \raisebox{-25pt}
    {$
      \begin{gathered}
        \left(0,x,i\right)
        =
        \left(0,y,j\right)
        =
        \left(0,l\right)
        =\left(0,z,i\right)
        \\
        \left(x,0,i\right)
        =
        \left(x,\apply{s}{i}\right)
        \\
        \left(x,1,i\right)
        =
        \left(x,\apply{t}{i}\right)
      \end{gathered}
    $}
  \right.
\]
Here $x,y,l\in \Interval$ and $z\in\Circle$ and the first coordinate
parameterizes the collar of the corresponding cells.
We will call these coordinate descriptions of the block the
\introduce{standard form of $\Block{\Cell}$}.
\begin{figure}[h]%
  \def\svgwidth{0.7\textwidth}
  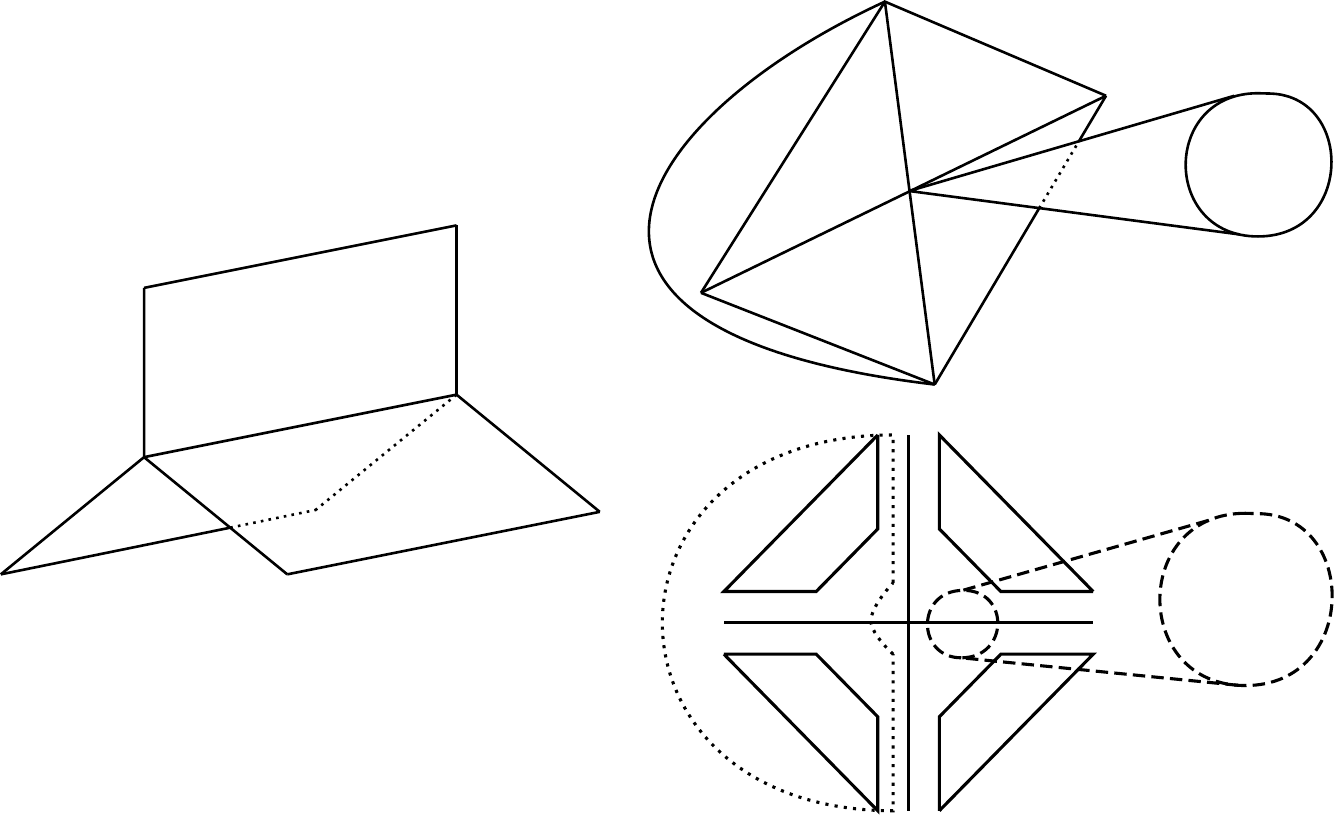
  \caption{%
    A typical $1$-block and a typical $0$-block. Below the $0$-block is its
    gluing pattern as described in the standard form
    \label{fig:Blocks}
  }
\end{figure}

\subsection{Maps of Surfaces to Generalized $2$-Complexes}
Now that we have a model for a neighborhood of the $1$-skeleton, we want to
lift this along a map $\Surface\to\GeometricRealization{\GeneralizedComplex}$.
This will require the following definition.
\begin{definition}[Transverse to the Membrane]
  Let $\GeneralizedComplex$ denote a combinatorial generalized $2$-complex.
  We call a map $\ContinuousMap \colon \Surface\to
  \GeometricRealization{\GeneralizedComplex}$
  \introduce{transverse to the membranes} if
  \begin{enumerate}[(a)]
    \item
      $\ContinuousMap$ is smooth when restricted to the preimage of an open
      $2$-cell $\Cell$ and $\ContinuousMap$ is transverse to
      $\IncludedMembrane{\Cell}$
    \item
    \label{itm:Transverse1Membrane}
      The map
      \[
        \SingularSetProjection{1}
        \circ
        \at{\ContinuousMap}{
          \apply{
            \ContinuousMap^{-1}
          }
        {
          \apply{
            \SingularSet{1}
          }
          {
            \GeneralizedComplex
          }
        }
        }
      \]
      is transverse  to $\IncludedMembrane{\Cell}$ for any
      $
        \Cell
        \in
        \CellsOfDim{1}{\GeneralizedComplex}
      $.
  \end{enumerate}
\end{definition}

\begin{lemma}
\label{lem:HomotopicToTransverse}
  Let $\GeneralizedComplex$ denote a combinatorial generalized $2$-complex,
  then every map
  $\ContinuousMap\colon \Surface\to \GeneralizedComplex$
  is homotopic to a map that is transverse to the membranes.
\end{lemma}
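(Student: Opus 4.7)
The plan is to proceed in two stages, first achieving condition~(a) on preimages of open $2$-cells, and then correcting inside small neighborhoods of $f^{-1}(\SingularSet{1}(\GeneralizedComplex))$ to obtain condition~(b), while keeping the second stage's perturbation small enough and supported in regions where condition~(a) is automatically preserved. Throughout, I would use the standard form of the blocks $\Block{\Cell}$ from Definition~\ref{dfn:BlockSingularSetProjection} to reduce local statements to standard differential topology on products.

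For condition~(a), first shrink the collar parameter so that a slightly smaller closed collar around $\SkeletonOfDim{\GeneralizedComplex}{1}$ still exhausts a neighborhood of the $1$-skeleton. On the open subset $U = f^{-1}(\GeneralizedComplex \setminus \SkeletonOfDim{\GeneralizedComplex}{1})$ of $\Sigma$, the restriction of $f$ lands in the disjoint union of the open $2$-cells, each of which carries a smooth structure. Apply a standard Whitney-type smoothing homotopy, compactly supported in $U$, to make $\at{f}{U}$ smooth. Then apply Thom's transversality theorem on each open $2$-cell to perturb $f$, again by a compactly supported homotopy inside $U$, so that it becomes transverse to the $1$-dimensional submanifold $\IncludedMembrane{\Cell}$. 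After this stage, $f^{-1}(\IncludedMembrane{\Cell})$ is a $1$-dimensional smooth submanifold of $\Sigma$ for every $2$-cell $\Cell$, and $f^{-1}(\apply{\SingularSet{1}}{\GeneralizedComplex})$ is a compact surface with boundary $\bigsqcup_{\Cell} f^{-1}(\IncludedMembrane{\Cell})$.

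For condition~(b), work block by block over $\CellsOfDim{\GeneralizedComplex}{1}$. In the standard form of a $1$-block $\Block{\Cell} \cong I \times I \times \{1,\dots,n_\Cell\}/\sim$, the first coordinate is the collar direction and the projection $\SingularSetProjection{1}$ is the second-coordinate projection to $\IncludedPlasma{\Cell} \cong I$; the $1$-membrane $\IncludedMembrane{\Cell}$ consists of the two endpoints of this interval. After stage~(a), the preimage of $\Block{\Cell}$ is a compact surface with boundary, and $\SingularSetProjection{1} \circ \at{f}{\dots}$ is a continuous map to an interval whose values on the boundary are already controlled (they avoid the endpoints). Apply a standard smoothing and a compactly supported perturbation inside the interior of this preimage, affecting only the second coordinate in the block, to make this composition smooth and transverse to the two endpoints. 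Since the perturbation moves points only along the $\IncludedPlasma{\Cell}$-direction and is supported away from $f^{-1}(\IncludedMembrane{\Cell'})$ for every $2$-cell $\Cell'$, condition~(a) is preserved. Doing this independently for each $1$-cell $\Cell$, and then patching with a partition of unity indexed by the $1$-blocks, produces the desired global homotopy.

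The main obstacle is ensuring compatibility at the $0$-blocks, where several $1$-blocks meet and the standard form is more intricate: the perturbations performed in different $1$-blocks must agree on their common overlaps, which lie inside $0$-blocks. I would handle this by performing all $1$-block perturbations compactly supported away from the $0$-blocks; this is possible because transversality of $\SingularSetProjection{1} \circ f$ to the finite set $\IncludedMembrane{\Cell}$ is an open condition and only needs to be established on any neighborhood of $f^{-1}(\IncludedMembrane{\Cell})$, which by a preliminary homotopy can be assumed disjoint from the $0$-blocks (push preimages of $0$-cells slightly into the $1$-block direction using the collar structure). Once this separation is arranged, the two stages decouple completely and the patching is routine.
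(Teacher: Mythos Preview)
Your two-stage strategy mirrors the paper's sketch, but there is a gap at the interface between the stages. In stage~(b) you assert that on the boundary of $f^{-1}(\Block{\Cell})$ the values of $\SingularSetProjection{1}\circ f$ ``are already controlled (they avoid the endpoints)''. Nothing in stage~(a) guarantees this: making $f$ transverse to the $2$-cell membranes $\IncludedMembrane{\Cell'}$ constrains only the normal (collar) direction, not the tangential position along $\IncludedMembrane{\Cell'}$, so the restriction of $f$ to the $1$-manifold $f^{-1}(\IncludedMembrane{\Cell'})$ may very well hit the finite set $\IncludedMembrane{\Cell'}\cap\SingularSetProjection{1}^{-1}(\IncludedMembrane{\Cell})$, and may do so non-transversally. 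Since your stage~(b) perturbation is compactly supported in the interior of $f^{-1}(\Block{\Cell})$, such bad boundary points are untouched and condition~(b) can fail there. (Dropping ``compactly supported'' is not an option either, because any nonzero perturbation on $f^{-1}(\IncludedMembrane{\Cell'})$ would fail to glue continuously to the unchanged map on the $2$-cell plasma side.)

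The paper's sketch addresses exactly this point: in its first step it makes $f$ transverse not only to the $2$-cell membranes $\IncludedMembrane{\Cell'}$ but \emph{also} to the finite subset $\SingularSetProjection{1}^{-1}(\IncludedMembrane{\Cell})\cap\IncludedMembrane{\Cell'}$ sitting on them. This extra generic condition forces $\SingularSetProjection{1}\circ f$ to already be transverse to $\IncludedMembrane{\Cell}$ along the boundary $f^{-1}\bigl(\bigcup_{\Cell'}\IncludedMembrane{\Cell'}\bigr)$ of the singular-set preimage, so that the second step can legitimately be carried out relative to that boundary. If you add this requirement to your stage~(a), the gap closes and your argument then coincides with the paper's.
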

This lemma can be proven by first replacing the restriction of the map to the
preimage of a neighborhood of the plasma by a homotopic map that is
transverse to the membranes of the $2$-cells and the preimage of the membranes
of the $1$-cells under $\SingularSetProjection{1}$ intersected with the
membrane of the $2$-cell.
Secondly, using the block structure one can now replace the restriction of the
map to the preimage of the singular set by a map that is transverse to the
preimage of the membranes of $1$-cells under $\SingularSetProjection{1}$ and
that is homotopic relative to the boundary of the singular set to the original
restriction. Gluing these two maps together yields a homotopic map that is
transverse to the membranes. Because this uses the block structure we need the
generalized $2$-complex to be combinatorial.

In order to prove Theorem~\ref{thm:NormalForm} we will consecutively manipulate
maps in order to turn them into a cellwise covering without folds. In the first
step we will tighten the map to remove unnecessary folds in the singular set.
This is captured by the following definition.
\begin{definition}[No Backtracking and Allowable]%
  Let
  $
    \ContinuousMap
    \colon
    \Surface
    \to
    \GeometricRealization{\GeneralizedComplex}
  $
  denote a map that is transverse to the membranes.
  Let $\Surface_{\partial}$ denote the preimage of the union of all boundaries
  of blocks (which is an embedded graph in $\Surface$).
  We say that
  \introduce{$\ContinuousMap$ has no backtracking}
  if $\ContinuousMap$ is locally injective on $\Surface_{\partial}$.

  Let $X$ denote a generalized $2$-complex. We say that a map $\Surface \to
  \GeometricRealization{\GeneralizedComplex}$
  is \introduce{allowable} if it is transverse to the membranes and
  the restriction to all preimages of plasma of $2$-cells is allowable.
\end{definition}

After replacing a map with an allowable one without backtracking, we will use
the main result of \cite{SkoraDegree} for surfaces with
boundary to turn the map into a branched covering on the preimage of open
$2$-cells. By pushing the branching points into the $1$-skeleton, we will get
an honest covering.
Then we deal with $1$-cells
by moving potential singularities into the $0$-cells.
This proof strategy is captured in the following lemma:
\begin{lemma}
  \label{lem:HomotopicCellwiseCovering}
  Let $\GeneralizedComplex$ denote a combinatorial generalized
  $2$-complex and $\ContinuousMap \colon \Surface \to
  \TopologicalSpace$ a continuous map which admits no squeezes.
  Then $\ContinuousMap$ is equivalent in the compression order to
  $\tilde{\ContinuousMap}$ such
  that $\tilde{\ContinuousMap}$ satisfies everything below:
  \begin{enumerate}[(a)]
    \item
    \label{itm:TransverseMembrane}
      $\tilde{\ContinuousMap}$ is transverse to the membranes
    \item
      \label{itm:NoFoldsAllowable} $\tilde{\ContinuousMap}$
      admits no backtracking and is allowable
    \item
      \label{itm:PlasmaBranchedCovering} the restriction of
      $\tilde{\ContinuousMap}$ to the preimage of the
      plasma of any $2$-cell is a branched covering
    \item
      \label{itm:PlasmaCovering}the restriction of
      $\tilde{\ContinuousMap}$ to the preimage of the
      plasma of any $2$-cell is a covering
    \item
      \label{itm:1Blocks} The preimage of the $1$-block
      belonging to any $\Cell$ consists of rectangles which
      can be parameterized such that $\tilde{\ContinuousMap}$
      is the inclusion of
      \[
        \raisebox{.2em}{$
          \Interval
          \times
          \Interval
          \times
          \{l,m\}
        $}
        /
        \raisebox{-.2em}{$
          \left(
            0,s,m
          \right)
          \sim
          \left(0,s,l\right)
        $}
        \to
        \raisebox{.2em}{$
          \Interval
          \times
          \Interval
          \times
          \{1,\ldots,n\}
        $}
        /
        \raisebox{-.2em}{$
          \left(
          0,s,i
          \right)
          \sim
          \left(
          0,s,j
          \right)
        $}
      \]
      into the standard form of the $1$-block belonging to $\Cell$,
      where
      $
        \{
          1,\ldots, n_{\Cell}
        \}
        \ni
        l
        \neq
        m
        \in
        \{
          1,\ldots,n_{\Cell}
        \}
      $
    \item
      \label{itm:0Blocks} a connected component of the preimage of
      any $0$-block can either
      be parameterized as
      \[
        \raisebox{.2em}{$
         \Circle \times \Interval
        $}
        /
        \raisebox{-.2em}{$
          \left(0,z\right)
          \sim
          \left(0,z'\right)
        $}
      \]
      or it can be parameterized as
      \[
        \left.
        \raisebox{.2em}
        {$
            \Interval
            \times
            \Interval
            \times
            \mathbb{Z}/k\mathbb{Z}
            \cup
            \Interval
            \times
            \mathbb{Z}/k\mathbb{Z}
        $}
        \Big/
        \raisebox{-0.75cm}{$
          \begin{gathered}
            \left(0,x,i\right)
            =
            \left(0,y,j\right)
            =
            \left(0,l\right)
            \\
            \left(x,0,i\right)
            =
            \left(x,i\right)
            \\
            \left(x,1,i\right)
            =
            \left(x,i+1\right)
          \end{gathered}
        $}
        \right.
      \]
      In this parameterization $\tilde{\ContinuousMap}$ is
      componentwise the inclusion with respect to the standard form
      of the $0$-block.
  \end{enumerate}
\end{lemma}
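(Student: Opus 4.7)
The plan is to establish the six conditions sequentially, at each step modifying $\tilde{\ContinuousMap}$ by a compression-order equivalence (or a homotopy, which is a special case of such an equivalence). For part (\ref{itm:TransverseMembrane}), I would directly invoke Lemma~\ref{lem:HomotopicToTransverse}: transversality is a homotopy-invariant property, and homotopic maps are compression-equivalent because both compression moves preserve the homotopy class up to the inflation/deflation at inessential annuli (which themselves bound disks, hence involve no homotopy change). So I would fix once and for all a transverse representative and work with it.

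For parts (\ref{itm:NoFoldsAllowable}) and (\ref{itm:1Blocks}), I would analyze the preimage graph $\Surface_{\partial}$ component by component. A backtracking arc or circle in $\Surface_{\partial}$ maps into a single membrane of the 1-singular set; using the standard form of $\Block{\Cell}$ for a 1-cell $\Cell$, a fold corresponds to two parallel sheets meeting along a fold curve, and the map factors locally through an interval. I would push any such fold off the boundary of the block by an isotopy, thereby removing it; this either changes nothing topologically (if the folded strip is inessential) or produces a simple closed curve mapped nullhomotopically into the 2-skeleton, which by the no-squeeze assumption must already be nullhomotopic in $\Surface$, so it bounds a disk that can be compressed away (move~(\ref{itm:InessentialCompression})). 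After finitely many reductions, the map becomes allowable without backtracking, and the preimage of every 1-block is a disjoint union of rectangles that glue along one boundary side exactly as required in~(\ref{itm:1Blocks}).

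Conditions (\ref{itm:PlasmaBranchedCovering}) and (\ref{itm:PlasmaCovering}) then follow by applying the relative Edmonds/Skora theorem cited before the lemma. Indeed, once $\tilde{\ContinuousMap}$ is allowable, its restriction to the preimage of each 2-cell plasma is an allowable map of compact surfaces with boundary of positive geometric degree; the no-squeeze and no-backtracking conditions imply that this restriction admits no pinches. Skora's theorem then puts it, up to homotopy relative to the boundary, into the form of a branched covering, giving~(\ref{itm:PlasmaBranchedCovering}). The branch points are isolated interior points; each can be dragged along an embedded arc to the membrane and pushed off into an adjacent 1-block, at the cost of introducing a new branch-like feature in the 1-skeleton that can be absorbed into the standard form from~(\ref{itm:1Blocks}) (using an inessential annulus compression if a redundant sphere component appears). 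After finitely many such moves the map becomes an honest covering on the plasma of each 2-cell.

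The main obstacle, and the step requiring the most care, is~(\ref{itm:0Blocks}): the normal form over 0-blocks. A preimage component meets finitely many of the rectangular preimages of 1-blocks along segments prescribed by~(\ref{itm:1Blocks}) and finitely many preimages of inessential collars along circles. The combinatorics of how these glue around a 0-cell $\Cell$ is encoded by the maps $s,t\colon\{1,\ldots,n_{\Cell}\}\to\{1,\ldots,k_{\Cell}\}$, and I must show that after further compression-equivalent modifications every component is either a disk-like ``cyclic'' piece $\Circle\times\Interval/\sim$ or one of the standard cycles $\Interval\times\Interval\times\Integers/k\Integers\cup\Interval\times\Integers/k\Integers$. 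The essential point is that any component not of this form contains either a nullhomotopic simple closed curve meeting the 0-block nontrivially (contradicting no squeezes unless it bounds a disk, in which case an inessential compression removes it) or a ``spur'' that can be canceled using move~(\ref{itm:InessentialInflation}) to restore combinatorial regularity. This local combinatorial reduction, together with the earlier covering properties, produces the desired $\tilde{\ContinuousMap}$.
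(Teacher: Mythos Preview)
Your overall plan of sequential refinement matches the paper's, but there are two genuine gaps and one structural difference.

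First, the paper does not obtain (\ref{itm:NoFoldsAllowable}) by directly ``pushing folds off.'' It introduces a folding degree $\Degree(\ContinuousMap)$ (Definition~\ref{dfn:Degree}) and proves that any representative minimizing $\Degree$ within its compression equivalence class automatically has no backtracking (Lemma~\ref{lem:dMinimizerNoBacktracking}) and is allowable (Lemma~\ref{lem:dMinimizerAllowable}). Your sketch for no-backtracking is plausible, but your treatment of allowability is essentially absent: nothing in a fold-removal procedure forces the covering degree over each membrane component to equal the geometric degree of the plasma map. The paper's allowability argument is a separate, nontrivial step in which one finds an arc from the membrane to a disk over which the map is already a $\GeometricDegree$-fold cover and uses an isotopy along this arc to merge two boundary components, strictly decreasing $\Degree$.

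Second, and more seriously, your claim that once backtracking is removed ``the preimage of every $1$-block is a disjoint union of rectangles'' is not correct. With no backtracking, each component of the preimage of $\Block{\Cell}$ is a disk bounded by a polygon whose edges alternate between pieces of $2$-cell membranes and pieces of $\SingularSetProjection{1}^{-1}(\IncludedMembrane{\Cell})$, but this polygon can have arbitrarily many sides. The paper handles (\ref{itm:1Blocks}) only \emph{after} (\ref{itm:PlasmaBranchedCovering}) and (\ref{itm:PlasmaCovering}), and then performs an explicit star-collapse inside each polygon (connecting midpoints of the edges over one side of $\IncludedMembrane{\Cell}$ to the center, quotienting, and separating by an isotopy as in Figure~\ref{fig:Isotopy1Block}) to decompose it into rectangles. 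Your ordering, doing (\ref{itm:1Blocks}) before (\ref{itm:PlasmaBranchedCovering}) and (\ref{itm:PlasmaCovering}), also forces you to push branch points through already-normalized $1$-blocks; you acknowledge this but the phrase ``can be absorbed'' is not a proof.

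Finally, for (\ref{itm:0Blocks}) the paper does not argue combinatorially via spurs and cancellations. It uses that both the $0$-block and each component of its preimage are homeomorphic to cones over their boundaries, so after putting the boundary into standard form one replaces the map, relative to the boundary, by the cone of its boundary restriction.
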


In order to prove (\ref{itm:NoFoldsAllowable}) in
Lemma~\ref{lem:HomotopicCellwiseCovering} we will need to measure how folded a
map is. This is encapsulated in the following definition.
\begin{definition}[Folding Degree $\Degree$]%
\label{dfn:Degree}
  Let $\GeneralizedComplex$ be a combinatorial generalized $2$-complex,
  and consider the set of edges $\BoundaryGraphEdges{\GeneralizedComplex}$
  of the block boundary graph,
  each element being an interval or circle in $\GeneralizedComplex$. We can
  decompose $\BoundaryGraphEdges{\GeneralizedComplex}$ into the set of edges
  $
    \BoundaryGraphEdges{\GeneralizedComplex}^{\ContinuousMap}_{\infty}
  $
  where there exists a point in the edge which has an infinite preimage and its
  complement $\BoundaryGraphEdges{\GeneralizedComplex}^{\ContinuousMap}_{fin}$
  We can define
  $
    \BoundaryGraphVertices{\GeneralizedComplex}^{\ContinuousMap}_{\infty}
  $
  and
  $
    \BoundaryGraphVertices{\GeneralizedComplex}^{\ContinuousMap}_{fin}
  $
  analogously.
  $\BoundaryGraphVertices{\GeneralizedComplex}$.
  We define the \introduce{folding degree} $\apply{\Degree}{\ContinuousMap}$
  of a map
  $
    \ContinuousMap
    \colon
    \Surface
    \to
    \GeometricRealization{\GeneralizedComplex}
  $:
  \begin{align*}
    \apply{\Degree}{\ContinuousMap}
    \coloneqq &
    \left(
      \Cardinality{
          \BoundaryGraphEdges{\GeneralizedComplex}^{\ContinuousMap}_{\infty}
      }
      +
      \Cardinality{
        \BoundaryGraphVertices{\GeneralizedComplex}^{\ContinuousMap}_{\infty}
      }
      ,
      \sum_{%
        C \in \BoundaryGraphEdges{\GeneralizedComplex}^{\ContinuousMap}_{fin}
      }
      \sup
      \left\{
        \Cardinality{%
          \apply{\ContinuousMap^{-1}}{p}
        }
        \middle|
        p \in C \setminus \BoundaryGraphVertices{\GeneralizedComplex}
      \right\}
      +
      \sum_{%
        p \in \BoundaryGraphVertices{\GeneralizedComplex}^{\ContinuousMap}_{fin}
      }
      \Cardinality{%
        \apply{\ContinuousMap^{-1}}{p}
      }
    \right)
  \end{align*}
  We denote the first coordinate by $\DegreeInfty$ and the second coordinate by
  $\DegreeFinite$.
\end{definition}

\begin{lemma}
  \label{lem:dMinimizerNoBacktracking}
  Let $\GeneralizedComplex$ be a generalized $2$-complex and
  $
    \ContinuousMap
    \colon
    \Surface
    \to
    \GeometricRealization{\GeneralizedComplex}
  $
  be a map from a closed oriented surface
  which is transverse to the membranes, admits no squeezes,
  and has minimal $\apply{\Degree}{\ContinuousMap}$ in its equivalence class of
  the compression order (Here the minimum is taken in the lexicographic order).
  Then $\ContinuousMap$ has no backtracking
  and $\apply{\Degree_{\infty}}{\ContinuousMap} = 0$.
\end{lemma}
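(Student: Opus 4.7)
The plan is to argue by contradiction, using that the compression equivalence class of $\ContinuousMap$ is preserved by homotopy (representatives are taken up to homotopy throughout) and by the moves~(\ref{itm:InessentialCompression}) and~(\ref{itm:InessentialInflation}). In each failure case I will construct a representative in the same equivalence class with strictly smaller $\Degree$ in the lexicographic order, contradicting the assumed minimality.

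First I would establish $\apply{\DegreeInfty}{\ContinuousMap} = 0$. Assume toward a contradiction that some $C \in \BoundaryGraphEdges{\GeneralizedComplex}$ or $v \in \BoundaryGraphVertices{\GeneralizedComplex}$ carries a point $p$ with $\Cardinality{\apply{\ContinuousMap^{-1}}{p}} = \infty$. Transversality to the membranes together with compactness of $\Surface$ forces preimages of points interior to a membrane to be finite whenever the map meets them transversally; hence the infinite preimage at $p$ must arise from an open region of $\Surface$ that maps entirely into the closed star of $p$ in $\GeometricRealization{\GeneralizedComplex}$ (the union of closures of cells of dimension at most one adjacent to $p$, which is contractible). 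An innermost frontier component of such a region is a simple closed curve $\gamma \subset \Surface$ whose image under $\ContinuousMap$ is nullhomotopic in $\GeometricRealization{\GeneralizedComplex}$. Since $\ContinuousMap$ admits no squeezes, $\gamma$ bounds an embedded disk $D \subset \Surface$. A homotopy supported on a neighborhood of $D$ realizing the nullhomotopy of $\apply{\ContinuousMap}{\gamma}$ collapses the problematic region and strictly reduces the number of edges and vertices with infinite preimage, hence reducing $\DegreeInfty$.

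Next, assuming $\apply{\DegreeInfty}{\ContinuousMap}=0$, I would rule out backtracking. Suppose $\at{\ContinuousMap}{\Surface_{\partial}}$ fails to be locally injective at some point $q$. Since every edge and vertex of the block boundary graph has finite preimage and $\ContinuousMap$ is transverse to the membranes, the failure of local injectivity must take the standard fold normal form: two local branches of $\Surface_{\partial}$ meet at $q$ and map coherently onto the same arc of the boundary graph. Choosing an innermost such fold yields a small disk $D \subset \Surface$, bounded in part by these two branches, whose image lies entirely in a single block. A local homotopy supported on a neighborhood of $D$ pushes $\at{\ContinuousMap}{D}$ off the offending membrane, eliminating both branches and creating no new ones. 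This strictly decreases $\DegreeFinite$ while preserving $\apply{\DegreeInfty}{\ContinuousMap}=0$, again contradicting minimality.

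The main obstacle is the first step. One has to show carefully that an accumulation of preimages at $p$ actually produces an open region of $\Surface$ whose image is contained in the closed star of $p$, which amounts to a local analysis of how the block structure governs the image of a neighborhood of a point in $\Surface$ near an accumulation point of $\apply{\ContinuousMap^{-1}}{p}$. The no-squeezes hypothesis must then be invoked to convert this topological configuration into an embedded compressing disk. One further has to check that the supporting homotopy does not create new infinite preimages on a neighboring edge or vertex, which is achieved by confining the modification to a sufficiently small neighborhood of $D$ and exploiting the local product structure of the blocks near $p$.
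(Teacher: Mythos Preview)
Your overall strategy---reduce $\Degree$ by local modifications, invoke no-squeezes to handle nullhomotopic curves---is the right one, but the first step contains a genuine gap, and reversing the order of the two conclusions is what creates it.

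Your claim that an infinite preimage at $p$ ``must arise from an open region of $\Surface$ that maps entirely into the closed star of $p$'' is not correct. Transversality to the membranes tells you that $\Surface_{\partial}$ is a compact embedded graph in $\Surface$, and the preimage $\apply{\ContinuousMap^{-1}}{p}$ lies inside $\Surface_{\partial}$. An infinite preimage therefore yields an accumulation point $q$ \emph{in the one-dimensional set} $\Surface_{\partial}$, at which $\at{\ContinuousMap}{\Surface_{\partial}}$ fails local injectivity; it does not produce a two-dimensional region whose frontier is a simple closed curve. So the innermost-disk argument you outline has nothing to grab onto, and your acknowledged ``main obstacle'' is in fact fatal to this route.

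The paper avoids the problem by proving the two conclusions in the opposite order. It first eliminates backtracking directly: for a star $S$ in the block boundary graph, the preimage $\apply{\ContinuousMap^{-1}}{S}$ consists of arcs and circles (by transversality). Each arc is straightened by a deformation of the star (if its endpoints map to distinct boundary points) or pushed off into an adjacent $2$-cell by an explicit isotopy (if both endpoints map to the same point). Circles are handled exactly as you suggest, via no-squeezes and inessential compression. The same analysis is then repeated for edges in membranes of $2$-cells. Once $\ContinuousMap$ has no backtracking, $\at{\ContinuousMap}{\Surface_{\partial}}$ is locally injective on a compact graph, hence has finite fibers, and $\apply{\DegreeInfty}{\ContinuousMap}=0$ follows for free from compactness. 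Your second step is close in spirit to this, but you should treat folds on arcs and on circles separately, and deal with $\DegreeInfty$ as a consequence rather than a preliminary.
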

\begin{proof}
  We will first show that if $\Degree$ is minimal, then $\ContinuousMap$ has to
  be locally injective on the preimage of membranes of $1$-cells and the
  adjacent edges in the block boundary graph.
  Then we will proceed with the membranes of $2$-cells.
  We will do this by showing that if $\ContinuousMap$ was not already of a
  particular form, then $\apply{\Degree}{\ContinuousMap}$ could
  not have been minimal.

  Let $S$ denote a star in the block boundary graph consisting of a point in
  the membrane of a $1$-cell and all its adjacent edges.
  The preimage of $S$ under $\ContinuousMap$ consists of a collection of
  arcs and circles. Let us first concentrate on the arcs and then on the
  circles.

  By transversality, $\ContinuousMap$ is a diffeomorphism in a
  neighborhood of the boundary of these arcs.
  Pick an arc in the preimage and denote it by $a'$, and
  fix a tubular neighborhood $\apply{N}{S}$ of $S$ such that its preimage
  is a tubular neighborhood of the preimage of $S$.
  Let us denote the restriction of this tubular neighborhood to $a'$ by
  $\apply{N}{a'}$.
  We have to distinguish two cases: Either $a'$ connects different boundary
  points of the star or it maps both endpoints to the same boundary point.
  
  Relative to two endpoints of the star there
  exists a deformation of the star, linear on the rays, which deforms the star
  to the union of the two edges connecting the two boundary points.
  Using the extra coordinate of the tubular neighborhood, we can extend this
  homotopy to $\apply{N}{S}$ such that it is the identity on the part of
  boundary of the tubular neighborhood that is not a tubular neighborhood of
  the boundary of the star. Postcomposing
  $
    \at{\ContinuousMap}
      {\apply{N}{a'}}
  $
  with the
  homotopy corresponding to the image of the endpoints of $a'$ yields a new
  map such that the image of $a'$ is an interval. Using an extension of the
  linear homotopy between $\at{\ContinuousMap}{a'}$
  and the unique affine map with the same endpoints as
  $\at{\ContinuousMap}{a'}$ to $\apply{N}{a'}$ yields a new map, which is
  affine on $a'$.
  
  Let us now assume that $a'$ maps both its endpoints to the same boundary
  point.
  We can again use the deformation of the star relative to the boundary
  point in the image and an arbitrary second boundary point described before
  (if there is no second boundary point the argument works without the
  deformation) so that the image of $a'$ is an interval.
  Since the image of the arc is contained in two rays (or a single ray) we can
  treat it as lying in an interval. Postcomposing the map on $\apply{N}{a'}$
  with the isotopy depicted in Figure~\ref{fig:IsotopyDMinimizer} yields a map
  that maps $a'$ to the plasma of a $2$-cell. The resulting map is still
  transverse to the membrane since it was transverse to the preimage of the
  membranes under the isotopy.
  \begin{figure}[h]%
\begingroup%
  \makeatletter%
  \providecommand\color[2][]{%
    \errmessage{(Inkscape) Color is used for the text in Inkscape, but the package 'color.sty' is not loaded}%
    \renewcommand\color[2][]{}%
  }%
  \providecommand\transparent[1]{%
    \errmessage{(Inkscape) Transparency is used (non-zero) for the text in Inkscape, but the package 'transparent.sty' is not loaded}%
    \renewcommand\transparent[1]{}%
  }%
  \providecommand\rotatebox[2]{#2}%
  \newcommand*\fsize{\dimexpr\f@size pt\relax}%
  \newcommand*\lineheight[1]{\fontsize{\fsize}{#1\fsize}\selectfont}%
  \ifx\svgwidth\undefined%
    \setlength{\unitlength}{261.26867177bp}%
    \ifx\svgscale\undefined%
      \relax%
    \else%
      \setlength{\unitlength}{\unitlength * \real{\svgscale}}%
    \fi%
  \else%
    \setlength{\unitlength}{\svgwidth}%
  \fi%
  \global\let\svgwidth\undefined%
  \global\let\svgscale\undefined%
  \makeatother%
  \begin{picture}(1,0.77649953)%
    \lineheight{1}%
    \setlength\tabcolsep{0pt}%
    \put(0,0){\includegraphics[width=\unitlength,page=1]{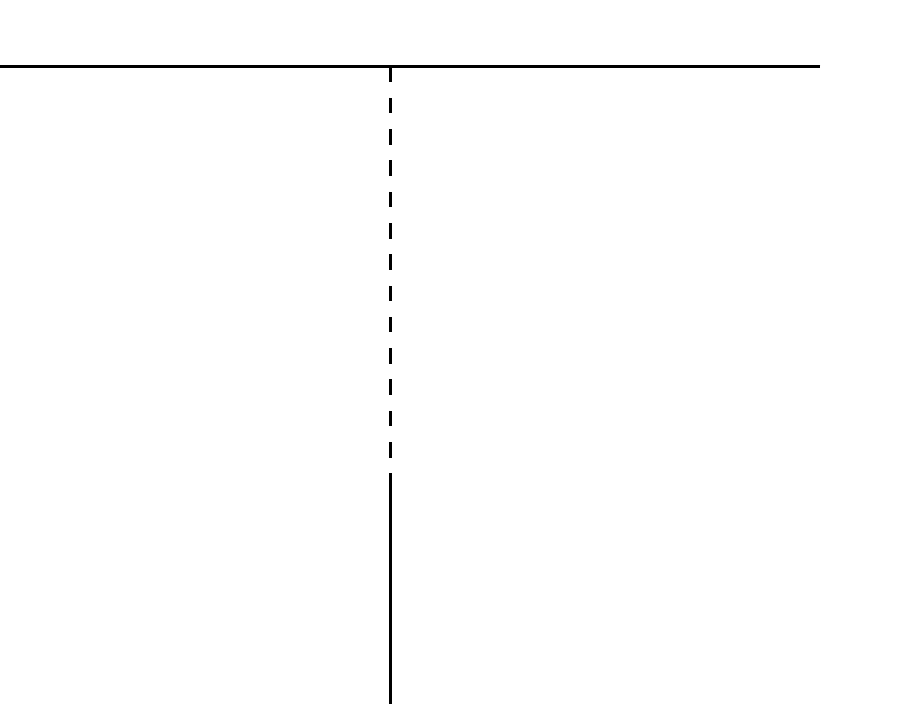}}%
    \put(0.39461193,0.71740225){\color[rgb]{0,0,0}\makebox(0,0)[lt]{\lineheight{1.25000012}\smash{\begin{tabular}[t]{l}$\apply{\ContinuousMap}{a'}$\end{tabular}}}}%
    \put(0,0){\includegraphics[width=\unitlength,page=2]{Figures/IsotopyDMinimizer.pdf}}%
  \end{picture}%
\endgroup%

    \caption{%
      The isotopy pushes the dashed line to the part of the membrane between
      its endpoints. This is only defined on the interval and the indicated
      neighborhood not on the whole star.
      \label{fig:IsotopyDMinimizer}
    }
  \end{figure}

  Doing this for all arcs in the preimage of all stars yields
  a new map which is locally injective on every arc in the preimage of the
  stars.
  Let us denote this map by $\ContinuousMap'$.

  So suppose that there exists a circle in the preimage of a star.
  Since $\ContinuousMap$ admits no squeeze this circle necessarily bounds a
  disk.
  If the restriction of $\ContinuousMap'$ to this disk is homotopic
  relative its boundary to a constant map, we can use this homotopy to remove
  all preimages of the block boundary graph in this disk.
  If the restriction of $\ContinuousMap$ does not admit such a homotopy we can
  perform a compression at its boundary and then assume that it has such a
  homotopy.
  Doing this for all circles yields a new map which is locally injective on the
  preimage of all stars, is transverse to the membranes and
  equivalent to $\ContinuousMap$.
  If $\ContinuousMap$ was not already of this form, then
  $\apply{\Degree}{\ContinuousMap'}<\apply{\Degree}{\ContinuousMap}$.
  Therefore we conclude that $\ContinuousMap$ was already locally injective on
  the preimage of stars.

  Let us now focus on membranes of $2$-cells. The arguments that follow are
  almost identical to the previous ones.
  As was noted before, since $\ContinuousMap$
  is transverse to the membranes, it is a diffeomorphism in a neighborhood of
  a preimage of a vertex of the block boundary graph in the membranes of
  $2$-cells.
  
  Consider a single edge $a$ in the block boundary graph of
  $\GeneralizedComplex$ lying in the membrane of a $2$-cell.
  By transversality the preimage of $a$ consists of arcs and circles.
  Again let  us concentrate on arcs first and then deal with circles.
  Let us pick an arc in the preimage and denote it by $a'$.
  Fix a tubular neighborhood $\apply{N}{a}$ of $a$ such that its preimage
  is a tubular neighborhood of the preimage.
  Let us denote the restriction of this tubular neighborhood to $a'$ by
  $\apply{N}{a'}$.
  
  If $a'$ connects different vertices in the block boundary graph, then
  using an extension of the linear homotopy between $\at{\ContinuousMap}{a'}$
  and the unique affine map with the same endpoints as
  $\at{\ContinuousMap}{a'}$ to $\apply{N}{a'}$ yields a new map, which is
  affine on this arcs in the preimage. Otherwise use an isotopy on the membrane
  similar to the one depicted in Figure~\ref{fig:IsotopyDMinimizer} to map $a'$
  completely into the image of its adjacent edge in the block boundary graph.
  Note that by transversality the neighboring edges have to map to the same
  edge in the block boundary graph.

  Doing this for all arcs in the preimage of the membranes of $2$-cells yields
  a new map. After finitely many repetitions,
  this process ends with a map that is locally
  injective except for circle components that map to the inner of edges,
  and we can remove those
  up to equivalence in the same way we did for circles in the preimage of
  stars.
  This yields a new map which is locally injective on the
  preimage of all stars, is transverse to the membranes and
  equivalent to $\ContinuousMap$.
  If $\ContinuousMap$ was not already of this form, then
  $\apply{\Degree}{\ContinuousMap'}<\apply{\Degree}{\ContinuousMap}$.
  Therefore we conclude that $\ContinuousMap$ was already locally injective on
  the preimage of membranes of $2$-cells.

  By compactness of $\Surface$, every map without backtracking $\ContinuousMap$
  has $\apply{\DegreeInfty}{\ContinuousMap}=0$.
\end{proof}

\begin{lemma}%
\label{lem:dMinimizerAllowable}
  Let $\GeneralizedComplex$ be a generalized $2$-complex and
  $
    \ContinuousMap
    \colon
    \Surface
    \to
    \GeometricRealization{\GeneralizedComplex}
  $
  be a map from a closed oriented surface
  which is transverse to the membranes, admits no squeezes,
  and has minimal $\apply{\Degree}{\ContinuousMap}$ in its equivalence class of
  the compression order (Here the minimum is taken in the lexicographic order).
  Then $\ContinuousMap$ is allowable.
\end{lemma}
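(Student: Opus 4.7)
The plan is to argue by contradiction: assuming $\ContinuousMap$ is not allowable, I will construct a compression-equivalent map with strictly smaller folding degree, contradicting the minimality hypothesis. This mimics the strategy used in Lemma~\ref{lem:dMinimizerNoBacktracking}.

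For each $2$-cell $\Cell$ of $\GeneralizedComplex$, set $\Surface_{\Cell} := \apply{\ContinuousMap^{-1}}{\IncludedPlasma{\Cell}}$ and let $\ContinuousMap_{\Cell} := \at{\ContinuousMap}{\Surface_{\Cell}}$. Transversality to the membranes ensures $\ContinuousMap_{\Cell}$ is a proper map of surfaces, and Lemma~\ref{lem:dMinimizerNoBacktracking} (no backtracking) implies that $\at{\ContinuousMap_{\Cell}}{\partial \Surface_{\Cell}}$ is locally injective, hence a covering onto each component of $\IncludedMembrane{\Cell}$. Failure of allowability means that there is some $\Cell$ for which either the boundary covering degrees differ across the components of $\IncludedMembrane{\Cell}$, or the common degree strictly exceeds $\apply{\GeometricDegree}{\ContinuousMap_{\Cell}}$. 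In either case an accounting of signed covering degrees forces the existence of two sub-arcs $a_1, a_2 \subset \partial \Surface_{\Cell}$ lying over a common component $\gamma$ of $\IncludedMembrane{\Cell}$ and covering it with opposite orientations.

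I then construct an embedded annulus to compress. Pick points $p_i \in a_i$ with $\apply{\ContinuousMap}{p_1} = \apply{\ContinuousMap}{p_2}$, and an embedded arc $\beta \subset \Surface_{\Cell}$ from $p_1$ to $p_2$ whose image lies in a small collar of $\gamma$ inside $\IncludedPlasma{\Cell}$. Taking sufficiently short sub-arcs $a_i' \subset a_i$ adjacent to $p_i$ that cover the same segment of $\gamma$, the boundary of a regular neighborhood of $\beta \cup a_1' \cup a_2'$ in $\Surface$ is an embedded circle whose image under $\ContinuousMap$ is null-homotopic in $\IncludedPlasma{\Cell}$: the two ``along-$\gamma$'' contributions cancel because $a_1$ and $a_2$ have opposite orientations, leaving only $\beta$ traversed forward and backward. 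A tubular neighborhood of this circle is the desired embedded annulus, and the null-homotopy thickens to a disk extension in $\TopologicalSpace$.

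Compressing along this annulus is a move of type (\ref{itm:EssentialCompression}) or (\ref{itm:InessentialCompression}) in $\CompressionSmallerEqual$. After a small perturbation to restore transversality to the membranes (which does not increase $\apply{\Degree}{\ContinuousMap}$), the resulting $\ContinuousMap'$ lies in the same equivalence class as $\ContinuousMap$, and the sub-arcs $a_1', a_2'$ have been absorbed into the interior of $\Surface_{\Cell}$, strictly reducing $\DegreeFinite$ while keeping $\DegreeInfty = 0$. Hence $\apply{\Degree}{\ContinuousMap'} < \apply{\Degree}{\ContinuousMap}$ lexicographically, contradicting minimality. The main obstacle I expect is verifying that non-allowability really does force an orientation-reversing pair $(a_1, a_2)$; this rests on a careful comparison between the geometric degree and the signed boundary covering degrees, drawing on the relative version of Edmonds' theorem from \cite{SkoraDegree}.
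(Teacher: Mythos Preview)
Your overall strategy (reduce $\Degree$ by merging two boundary sheets over an ``excess'' membrane component) matches the paper's, but two steps do not go through as written.

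First, the arc $\beta$ you want does not exist in general. You ask for an embedded arc in $\Surface_{\Cell}$ from $p_{1}$ to $p_{2}$ whose image stays in a small collar of $\gamma$ inside $\IncludedPlasma{\Cell}$. But the preimage of such a collar is a disjoint union of collars of the components of $\partial\Surface_{\Cell}$, and $p_{1},p_{2}$ lie on different components; there is no path between them inside that preimage. The paper supplies the missing idea: it first homotopes $\ContinuousMap_{\Cell}$ (inside the plasma, so $\Degree$ is unchanged) so that it is an honest $\apply{\GeometricDegree}{\ContinuousMap_{\Cell}}$-fold cover over a small disk $D^{2}\subset\IncludedPlasma{\Cell}$, and then takes an embedded path $y$ in the target from a vertex on $\gamma$ to $\partial D^{2}$. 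A sheet count (the membrane end of $y$ has more preimages than the $D^{2}$ end) forces some component $y'$ of $\apply{\ContinuousMap^{-1}}{y}$ to have both endpoints on the membrane side; this $y'$ is the arc you were trying to write down, and orientability of $\Surface$ is used only to conclude that its endpoints lie on distinct boundary components. Your ``opposite orientation'' detour is not needed and, as you suspect, is where the argument would bog down for non-orientable cells.

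Second, compression is the wrong move here. Since $\ContinuousMap$ admits no squeezes, the circle you build is inessential, so the compression is of type~(\ref{itm:InessentialCompression}): it splits off a sphere and leaves the rest of the map unchanged up to homotopy. In particular it does not ``absorb $a_{1}',a_{2}'$ into the interior'' or decrease $\DegreeFinite$. What the paper does instead is a homotopy: it postcomposes with an isotopy of the target supported in a tubular neighbourhood of $y$ (Figure~\ref{fig:IsotopyAllowable}) that drags part of the membrane across $y$, so that the two sheets meeting the endpoints of $y'$ are merged. This reduces the preimage count over the chosen membrane vertex by two, hence strictly decreases $\DegreeFinite$, while transversality is preserved by design. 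The contradiction with minimality of $\Degree$ then follows.
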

\begin{proof}
  By Lemma~\ref{lem:dMinimizerNoBacktracking},
  $\ContinuousMap$ has no backtracking.
  Therefore $\ContinuousMap$ is a covering on the preimage of the membranes
  of $2$-cells.

  To prove that $\ContinuousMap$ is allowable,
  assume that $\ContinuousMap$ is not allowable.
  Then there is a $2$-cell $\Cell$, such that $\ContinuousMap$ is not allowable
  over $\IncludedPlasma{\Cell}$.
  We abbreviate
  $
    \at{\ContinuousMap}{\apply{\ContinuousMap^{-1}}{\IncludedPlasma{\Cell}}}
  $
  by $\ContinuousMap_{\Cell}$.
  This means that there is a component $c$ of $\IncludedMembrane{\Cell}$
  such that the degree of
  $
    \at{\ContinuousMap_{\Cell}}{\apply{\ContinuousMap^{-1}}{c}}
    \colon
    \apply{\ContinuousMap^{-1}}{c}
    \to
    c
  $
  is greater than $\apply{\GeometricDegree}{\ContinuousMap_{\Cell}}$.

  By the definition of geometric degree,
  there is a map $\ContinuousMap'_{\Cell}$
  homotopic to $\ContinuousMap_{\Cell}$, still mapping to
  $\IncludedPlasma{\Cell}$, which is a
  $
    \apply{\GeometricDegree}{\ContinuousMap_{\Cell}}
    =
    \apply{\GeometricDegree}{\ContinuousMap'_{\Cell}}
  $%
  -fold covering over an open disk
  $
    \Ball{2}
    \subset
    \IncludedPlasma{\Cell}
  $.
  Let $\ContinuousMap'$ be defined by extending $\ContinuousMap'_{\Cell}$ by
  $\ContinuousMap$. Since the homotopy between $\ContinuousMap$ and
  $\ContinuousMap'$ is supported in the inner of the plasma of $\Cell$, we
  conclude that
  $\apply{\Degree}{\ContinuousMap}=\apply{\Degree}{\ContinuousMap'}$.
  Fix an embedded path $y\subset \IncludedCell{\Cell}$
  starting at a point $\Point \in \IncludedMembrane{\Cell}\cap
  \BoundaryGraphVertices{\GeneralizedComplex}$
  and ending in $\partial \Ball{2}$,
  transverse to $\ContinuousMap'_{\Cell}$.
  We will use $y$ to reduce $\Degree$ by merging two boundary
  components of $\apply{\ContinuousMap'^{-1}}{\IncludedPlasma{\Cell}}$ over
  $y$.

  The preimage of $y$ consists of a collection of arcs with
  one or two endpoints in
  $
  \apply{
    \ContinuousMap^{\prime-1}
  }{
    \IncludedMembrane{\Cell}
  }
  $.
  Since the degree of
  $
    \at{
      \ContinuousMap'_{\Cell}
    }{
      \apply{\ContinuousMap^{\prime -1}}{c}
    }
    \colon
    \apply{\ContinuousMap^{\prime -1}}{c}
    \to
    c
  $
  is greater than $\apply{\GeometricDegree}{\ContinuousMap'_{\Cell}}$,
  there exists an arc $y'$ in the preimage of $y$ that connects two points in
  $\apply{\ContinuousMap_{\Cell}^{\prime -1}}{c}$.
  By orientability of $\Surface$, $y'$ connects two different components of
  $
    \apply{
      \ContinuousMap^{\prime -1}_{\Cell}
    }{
      \IncludedMembrane{\Cell}
    }
  $.

  Fix a small tubular neighborhood $\apply{N}{y}$ of $y$ such that
  $
    \apply{
      \ContinuousMap_{\Cell}^{-1}
    }{
      \apply{N}{y}
    }
  $
  is a tubular neighborhood of $y'$ as well. Let us denote this tubular
  neighborhood by $\apply{N}{y'}$.
  Note that by counting preimages
  $
    \apply{
      \ContinuousMap
    }{
      y'
    }
  $
  cannot intersect
  $\Ball{2}$.
  Let $\ContinuousMap''$ be defined as $\ContinuousMap'$ outside of
  $\apply{N}{y'}$ and in $\apply{N}{y'}$ be defined as $\ContinuousMap'$
  postcomposed with the
  diffeomorphism depicted in Figure~\ref{fig:IsotopyAllowable}. Evidently
  $\ContinuousMap''$ is homotopic to $\ContinuousMap'$ furthermore note that
  $\ContinuousMap''$ is still transverse to the membranes since the vertical
  tangent vectors (in Figure~\ref{fig:IsotopyAllowable}) lie in the image of
  the differential of $\ContinuousMap'$ and the only points where these do not
  suffice to have transversality to the preimage of the membrane under the
  diffeomorphism in Figure~\ref{fig:IsotopyAllowable} lie in $\Ball{2}$ and
  hence do not have a preimage in $\apply{N}{y'}$.

  By construction
  $
    \apply{
      \ContinuousMap'^{-1}
    }{
      \Point
    }
  $
  has two more points than
  $
    \apply{
      \ContinuousMap'^{-1}
    }{
      \Point
    }
  $
  and the other summands of
  $\apply{\DegreeFinite}{\ContinuousMap''}$ agree with the ones in
  $\apply{\DegreeFinite}{\ContinuousMap}$.
  \begin{figure}[h]%
    \def\svgwidth{1\textwidth}
    \hspace*{25pt}
\begingroup%
  \makeatletter%
  \providecommand\color[2][]{%
    \errmessage{(Inkscape) Color is used for the text in Inkscape, but the package 'color.sty' is not loaded}%
    \renewcommand\color[2][]{}%
  }%
  \providecommand\transparent[1]{%
    \errmessage{(Inkscape) Transparency is used (non-zero) for the text in Inkscape, but the package 'transparent.sty' is not loaded}%
    \renewcommand\transparent[1]{}%
  }%
  \providecommand\rotatebox[2]{#2}%
  \newcommand*\fsize{\dimexpr\f@size pt\relax}%
  \newcommand*\lineheight[1]{\fontsize{\fsize}{#1\fsize}\selectfont}%
  \ifx\svgwidth\undefined%
    \setlength{\unitlength}{556.2001842bp}%
    \ifx\svgscale\undefined%
      \relax%
    \else%
      \setlength{\unitlength}{\unitlength * \real{\svgscale}}%
    \fi%
  \else%
    \setlength{\unitlength}{\svgwidth}%
  \fi%
  \global\let\svgwidth\undefined%
  \global\let\svgscale\undefined%
  \makeatother%
  \begin{picture}(1,0.32669989)%
    \lineheight{1}%
    \setlength\tabcolsep{0pt}%
    \put(0,0){\includegraphics[width=\unitlength,page=1]{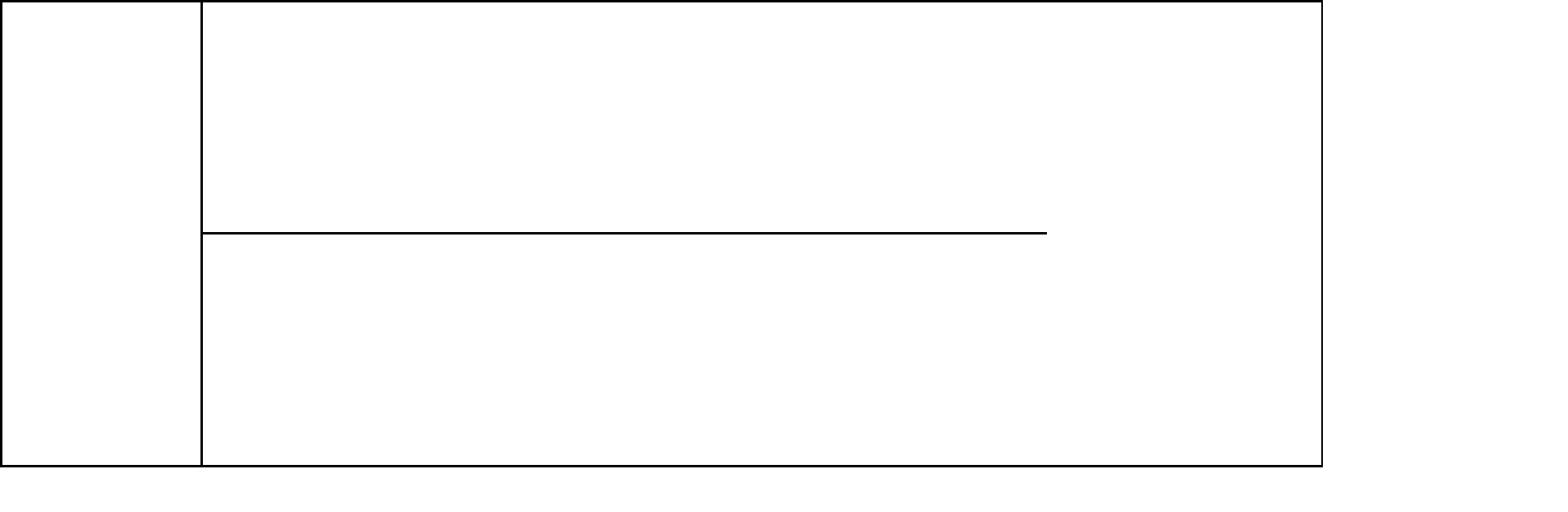}}%
    \put(0.85693067,0.25860391){\color[rgb]{0,0,0}\makebox(0,0)[lt]{\begin{minipage}{0.19396424\unitlength}\raggedright $N\left(y\right)$\end{minipage}}}%
    \put(0.10344758,0.01883926){\color[rgb]{0,0,0}\makebox(0,0)[lt]{\begin{minipage}{0.35059319\unitlength}\raggedright $\IncludedMembrane{\Cell}$\end{minipage}}}%
    \put(0,0){\includegraphics[width=\unitlength,page=2]{Figures/IsotopyAllowable.pdf}}%
    \put(0.68163408,0.30579915){\color[rgb]{0,0,0}\makebox(0,0)[lt]{\lineheight{1.25000012}\smash{\begin{tabular}[t]{l}$\Ball{2}$\end{tabular}}}}%
    \put(0,0){\includegraphics[width=\unitlength,page=3]{Figures/IsotopyAllowable.pdf}}%
    \put(0.67369848,0.17259633){\color[rgb]{0,0,0}\makebox(0,0)[lt]{\lineheight{1.25000012}\smash{\begin{tabular}[t]{l}$y$\end{tabular}}}}%
    \put(0,0){\includegraphics[width=\unitlength,page=4]{Figures/IsotopyAllowable.pdf}}%
  \end{picture}%
\endgroup%

    \caption{%
      The diffeomorphism is supported in $\apply{N}{y}$ and pushes the dashed
      portion
      of $D$ between the endpoints of the dotted line along $y$ to the dotted
      line. The other dashed line is the preimage of the portion of the plasma
      between the dotted line under this diffeomorphism.
      \label{fig:IsotopyAllowable}
    }
  \end{figure}
\end{proof}
\begin{proof}[Proof of Lemma~\ref{lem:HomotopicCellwiseCovering}]
  Since representatives in the equivalence class of $\ContinuousMap$ differ by
  compression at an inessential curve, there always exists a representative
  that minimizes $\Degree$ and admits no squeezes if $\ContinuousMap$ admitted
  no squeezes to begin with.
  Lemma~\ref{lem:HomotopicToTransverse} implies that such a representative is
  equivalent to a map that is transverse to the membranes which minimizes
  $\Degree$ in its equivalence class,
  hence satisfies Part~(\ref{itm:TransverseMembrane})
  Lemma~\ref{lem:dMinimizerNoBacktracking}
  and
  Lemma~\ref{lem:dMinimizerAllowable} imply that this
  map also satisfies Part~(\ref{itm:NoFoldsAllowable}).

  So let us assume that $\ContinuousMap$ fulfills (\ref{itm:TransverseMembrane})
  and (\ref{itm:NoFoldsAllowable}), then Theorem~1.1 in
  \cite{SkoraDegree} implies that there is a homotopy of the restriction of
  $\ContinuousMap$ to the preimage of all plasma of $2$-cells relative to the
  boundary of this preimage such that the resulting map is a branched covering
  over the plasma of $2$-cells.
  Hence for this map Part~(\ref{itm:TransverseMembrane}) to
  Part~(\ref{itm:PlasmaBranchedCovering}) hold.

  Now let us assume that $\ContinuousMap$ fulfills
  Part~(\ref{itm:TransverseMembrane}),~(\ref{itm:NoFoldsAllowable})
  and~(\ref{itm:PlasmaBranchedCovering}).
  Pick an embedded path from a branching point in the plasma
  of a cell to the intersection of the $1$-singular set with that
  cell.
  Postcompose $\ContinuousMap$ with an isotopy, supported in
  a tubular neighborhood of this path, that pushes along this path.
  By repeating this for every other branching point, we
  obtain a homotopic map that satisfies
  Parts~(\ref{itm:TransverseMembrane})-(\ref{itm:PlasmaCovering}).

  Let us assume that $\ContinuousMap$ already
  fulfills~(\ref{itm:TransverseMembrane})
  to~(\ref{itm:PlasmaCovering}).
  In order to find a map fulfilling~(\ref{itm:TransverseMembrane})
  to~(\ref{itm:1Blocks}), we will first alter $\ContinuousMap$ so that the
  preimage of $1$-blocks consists of rectangles and so that the previous parts
  of the lemma still hold.
  Then we will alter the resulting map on these rectangles.

  Let $\Cell$ denote a $1$-cell.
  Since $\ContinuousMap$ is transverse to the membranes, a component
  of the preimage of the $1$-block $\Block{\Cell}$ is bounded by embedded
  closed polygons.
  Since $\ContinuousMap$ admits no squeezes, the embedded closed polygons
  bound disks and each such component has genus $0$.
  Then we can perform surgery at each embedded closed polygon using the
  nullhomotopy collapsing it in the block.
  Since the curve bounded a disk this does not change the equivalence class in
  the compression order and by construction this did not change $\Degree$.
  Then the preimage of the $1$-block consists of disks and spheres.
  Removing these spheres does not alter the equivalence class of
  $\ContinuousMap$.
  Hence we can assume without loss of generality that the preimage of the
  $1$-blocks consists of disks.
  Furthermore since $\ContinuousMap$ has no backtracking, the edges of the
  polygon map alternately to
  $\apply{\SingularSetProjection{1}^{-1}}{\IncludedMembrane{\Cell}}$
  and
  $
    \bigcup_{
      \Cell'
      \in
      \CellsOfDim{\GeneralizedComplex}{2}
    }
    \left(
      \IncludedMembrane{\Cell'}
      \cap
      \Block{\Cell}
    \right)
  $.
  Furthermore, if we number the edges cyclically then every edge labeled $i$
  of the polygon maps to a different edge of $\partial \Block{\Cell}$ than the
  edges la belled $i-2$ and $i+2$.

  Let us decompose $\IncludedMembrane{\Cell}$ into the point
  $\IncludedMembrane{\Cell}^{0}$ and the other point
  $\IncludedMembrane{\Cell}^{1}$.
  Since $\ContinuousMap$ admits no backtracking we also know that the edges of
  the polygon mapping to
  $\apply{\SingularSetProjection{1}^{-1}}{\IncludedMembrane{\Cell}}$
  map alternately to
  $\apply{\SingularSetProjection{1}^{-1}}{\IncludedMembrane{\Cell}^{0}}$ and
  $\apply{\SingularSetProjection{1}^{-1}}{\IncludedMembrane{\Cell}^{1}}$.

  Connect the middle points of all edges in
  $
    \apply{
      \ContinuousMap^{-1}
    }
    {
      \apply{
        \SingularSetProjection{1}^{-1}
      }
      {
         \IncludedMembrane{\Cell}^{0}
      }
    }
  $
  to the center point of the polygon. This yields an embedded star $S$ in the
  polygon and we can find a map $\ContinuousMap'$ homotopic to $\ContinuousMap$
  which agrees with $\ContinuousMap$ outside the inner of the polygon and such
  that $\ContinuousMap'$ maps the star to $\IncludedMembrane{\Cell}^{0}$.
  Hence we obtain the following commutative diagram:
  \[
    \begin{tikzcd}
      \Surface
      \ar[d]
      \ar[r,"\ContinuousMap'"]
      &\GeneralizedComplex
      \\
      \Surface/S
      \ar[ru]
    \end{tikzcd}
  \]
  The left map is a homotopy equivalence. Let us denote the diagonal
  map by $\tilde{\ContinuousMap}'$.
  The map $\tilde{\ContinuousMap}'$ is in general not transverse to the
  membranes anymore. Let us fix this: Now the preimage of $\Block{\Cell}$
  consists of rectangles that possibly intersect in a single point in their
  boundary, and the restriction of $\tilde{\ContinuousMap}'$ to the boundary of
  such a rectangle is a homeomorphism. Therefore we can replace
  $\tilde{\ContinuousMap}'$ by a homotopic map that has the standard form on
  these rectangles.

  Postcomposing this map with an isotopy as depicted on the left side of
  Figure~\ref{fig:Isotopy1Block} yields a new map that we denote
  $\bar{\ContinuousMap}'$, which is again transverse to the membranes.
  This procedure does not change $\Degree$ and therefore
  Part~(\ref{itm:TransverseMembrane})
  and~(\ref{itm:NoFoldsAllowable}) hold by
  Lemma~\ref{lem:dMinimizerNoBacktracking} and
  Lemma~\ref{lem:dMinimizerAllowable} for
  $\bar{\ContinuousMap}'$ as well.
  Since $\ContinuousMap$ agrees with $\bar{\ContinuousMap}'$ over the plasma
  of the $2$-cells, Part~(\ref{itm:PlasmaBranchedCovering})
  and~(\ref{itm:PlasmaCovering}) are still true for $\bar{\ContinuousMap}'$.
  The preimage of $\Block{\Cell}$ consists of
  disjoint rectangles as depicted in Figure~\ref{fig:Isotopy1Block}.
  Furthermore $\bar{\ContinuousMap}'$ embeds such a rectangle
  into $\Block{\Cell}$.
  Hence $\bar{\ContinuousMap}'$ fulfills Parts~(\ref{itm:TransverseMembrane})
  to~(\ref{itm:1Blocks}).

  \begin{figure}[h]%
    \def\svgwidth{1.3\textwidth}
\begingroup%
  \makeatletter%
  \providecommand\color[2][]{%
    \errmessage{(Inkscape) Color is used for the text in Inkscape, but the package 'color.sty' is not loaded}%
    \renewcommand\color[2][]{}%
  }%
  \providecommand\transparent[1]{%
    \errmessage{(Inkscape) Transparency is used (non-zero) for the text in Inkscape, but the package 'transparent.sty' is not loaded}%
    \renewcommand\transparent[1]{}%
  }%
  \providecommand\rotatebox[2]{#2}%
  \newcommand*\fsize{\dimexpr\f@size pt\relax}%
  \newcommand*\lineheight[1]{\fontsize{\fsize}{#1\fsize}\selectfont}%
  \ifx\svgwidth\undefined%
    \setlength{\unitlength}{603.29743171bp}%
    \ifx\svgscale\undefined%
      \relax%
    \else%
      \setlength{\unitlength}{\unitlength * \real{\svgscale}}%
    \fi%
  \else%
    \setlength{\unitlength}{\svgwidth}%
  \fi%
  \global\let\svgwidth\undefined%
  \global\let\svgscale\undefined%
  \makeatother%
  \begin{picture}(1,0.24535144)%
    \lineheight{1}%
    \setlength\tabcolsep{0pt}%
    \put(0,0){\includegraphics[width=\unitlength,page=1]{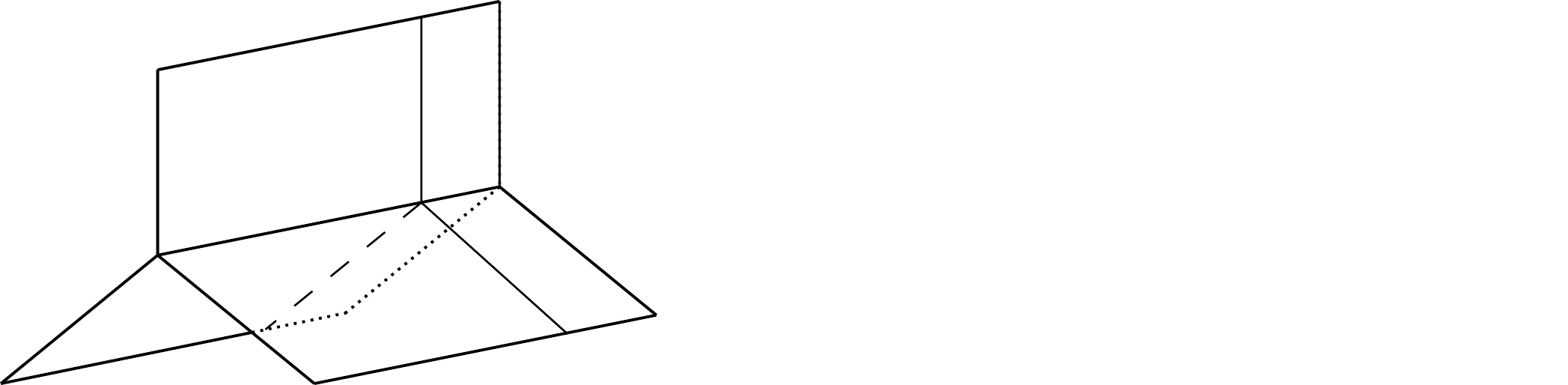}}%
    \put(0.19780749,0.19472961){\color[rgb]{0,0,0}\makebox(0,0)[lt]{\lineheight{1.25000012}\smash{\begin{tabular}[t]{l}$\apply{\SingularSetProjection{1}^{-1}}{\IncludedMembrane{\Cell}^{0}}$\end{tabular}}}}%
    \put(0,0){\includegraphics[width=\unitlength,page=2]{Figures/Isotopy1Block.pdf}}%
    \put(0.52519242,0.20513045){\color[rgb]{0,0,0}\makebox(0,0)[lt]{\lineheight{1.25000012}\smash{\begin{tabular}[t]{l}$\apply{\tilde{\ContinuousMap}'^{-1}}{\Block{\Cell}}$\end{tabular}}}}%
    \put(0,0){\includegraphics[width=\unitlength,page=3]{Figures/Isotopy1Block.pdf}}%
  \end{picture}%
\endgroup%

    \caption{%
      The isotopy pushes the indicated union of lines to
      $\apply{\SingularSetProjection{1}^{-1}}{\IncludedMembrane{\Cell}^{0}}$.
      The change to the preimage of the 1-Block is depicted on the right.
      \label{fig:Isotopy1Block}
    }
  \end{figure}

  So let us assume that $\ContinuousMap$ fulfills
  Parts~(\ref{itm:TransverseMembrane})-(\ref{itm:1Blocks}).
  Similar to the previous situation we can assume without loss of generality
  that the preimage of a $0$-block consists of polyhedra and disks.
  Since the $0$-block is homeomorphic to the cone of its boundary, and the
  preimage is homeomorphic to the componentwise cone of its boundary,
  we can replace $\ContinuousMap$ by a new map that agrees with
  $\ContinuousMap$ outside a neighborhood of the polyhedra and disks and still
  has those as the preimage of the $0$-block and so that the new map has the
  standard form on the boundary of these polyhedra.
  Now a map between cones is homotopic relative the boundary to the cone of the
  boundary map. Hence we can find a homotopic map
  satisfying~(\ref{itm:TransverseMembrane})-(\ref{itm:0Blocks}).
\end{proof}

\begin{proof}[Proof of Theorem~\ref{thm:NormalForm}]
  We can assume that $\ContinuousMap$ is in the form described in
  Lemma~\ref{lem:HomotopicCellwiseCovering}.
  Let us construct a generalized $2$-complex $\mathcal{S}$ that is homeomorphic
  to $\Surface$.
  Let $\CellsOfDim{\mathcal{S}}{i}$ denote the set of connected components of
  all preimages of open $i$-cells of $\TopologicalSpace$.
  Let
  $
    \CellType
    \colon
    \CellsOfDim{\mathcal{S}}{2}
    \to
    \ManifoldModelsOfDimension{2}
  $
  denote the map that sends such a connected component to the homeomorphism
  type of the preimage of the plasma of the open cell in that connected
  component.
  Note that the boundary of a connected component of the preimage of an open
  $i$-cell , necessarily lies in the preimage of the
  $\left(i-1\right)$-skeleton.
  Since $\ContinuousMap$ is of the form described in
  Lemma~\ref{lem:HomotopicCellwiseCovering}, the inner of the
  preimage of the plasma is homeomorphic to the preimage of the inner of the
  corresponding cell.
  This homeomorphism can be extended so that the boundary of the preimage of
  the plasma maps to the boundary of the preimage of the corresponding open
  cell.
  Define the gluing maps of $\mathcal{S}$ accordingly.
  Then it is evident that there is a homeomorphism
  $\Phi \GeometricRealization{\mathcal{S}}\to \Surface$ and that
  $\ContinuousMap\circ \Phi$ is a cellwise covering without folds.
\end{proof}

As an application of Theorem~\ref{thm:NormalForm} we prove the following
interesting property of the attainable set:
\begin{proposition}%
  \label{prp:NonOrientableCellsIrrelevant}
  Given a generalized $2$-complex $\GeneralizedComplex$,
  let $\GeneralizedComplex_{O}$ denote the subcomplex
  given by the orientable cells.
  Then the inclusion
  $
    \iota_O
    \colon
    \GeneralizedComplex_{O}
    \to
    \GeneralizedComplex
  $
  induces an isomorphism
  $
    \HomologyOfSpaceObject{\GeneralizedComplex_{O}}{2}
    \to
    \HomologyOfSpaceObject{\GeneralizedComplex}{2}
  $
  and for all
  $
    \HomologyClass
    \in
    \HomologyOfSpaceObject{\GeneralizedComplex_{O}}{2}
  $
  we have
  $
    \AttainableSet{\TopologicalSpace}{\alpha}
    =
    \AttainableSet{\TopologicalSpace}{
      \apply{
        \HomologyOfSpaceMorphism{\iota_{O}}{2}
      }
      {\HomologyClass}
    }
  $.
\end{proposition}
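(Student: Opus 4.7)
The isomorphism of $H_2$-groups is the final assertion of Lemma~\ref{lem:HomologyGeneralized2Complex} and requires no separate argument.

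For the attainable set equality, the inclusion $\AttainableSet{\GeometricRealization{\OrientableComplex}}{\HomologyClass} \subseteq \AttainableSet{\GeometricRealization{\GeneralizedComplex}}{\apply{\HomologyOfSpaceMorphism{\iota_O}{2}}{\HomologyClass}}$ is immediate by postcomposition with $\iota_O$. For the reverse, my plan is as follows. Starting from a representative $(\Surface, \ContinuousMap)$ of $\apply{\HomologyOfSpaceMorphism{\iota_O}{2}}{\HomologyClass}$ in $\GeneralizedComplex$, I would apply Corollary~\ref{crl:CellwiseCovering} to replace it, without increasing $\apply{\ChiMinus}{\Surface}$ or $\apply{\Genus}{\Surface}$, by a cellwise covering without folds $(\Surface', \ContinuousMap')$. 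The task is then to further modify $(\Surface', \ContinuousMap')$, staying within these complexity bounds, so that its image lies in $\GeometricRealization{\OrientableComplex}$.

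Since $\Surface'$ is oriented, every 2-cell of $\Surface'$ mapped to a non-orientable 2-cell of $\GeneralizedComplex$ covers it with even degree, factoring through the orientation double cover. I split $\Surface' = \Surface'_O \cup \Surface'_N$, where $\Surface'_O$ and $\Surface'_N$ are the closures of the preimages of $\GeometricRealization{\OrientableComplex}$ and of the union of non-orientable 2-cells, respectively; both are orientable surfaces with boundary, sharing $\partial \Surface'_O = \partial \Surface'_N \subseteq \apply{{\ContinuousMap'}^{-1}}{\SkeletonOfDim{\GeneralizedComplex}{1}}$. Applying Lemma~\ref{lem:HomologyGeneralized2Complex} to the subcomplex spanned by the non-orientable cells (whose relative $H_2$ vanishes, as the direct sum there is over orientable cells only) forces $\apply{\HomologyOfSpaceMorphism{\ContinuousMap'}{1}}{[\partial \Surface'_N]}$ to be zero in $\HomologyOfSpaceObject{\SkeletonOfDim{\GeneralizedComplex}{1}}{1}$. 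Combined with the identification $\Omega^{SO}_1(Y) \cong \HomologyOfSpaceObject{Y}{1}$, this gives an oriented null-bordism $\Surface''_N$ of $\at{\ContinuousMap'}{\partial \Surface'_N}$ landing in $\SkeletonOfDim{\GeneralizedComplex}{1} \subseteq \GeometricRealization{\OrientableComplex}$. Gluing $\Surface'' := \Surface'_O \cup \Surface''_N$ along the common boundary then yields a representative of $\HomologyClass$ in $\GeometricRealization{\OrientableComplex}$.

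The main obstacle is controlling the topological complexity of $\Surface''$: one must select $\Surface''_N$ so that $\apply{\ChiMinus}{\Surface''} \leq \apply{\ChiMinus}{\Surface'}$ and $\apply{\Genus}{\Surface''} \leq \apply{\Genus}{\Surface'}$. The natural candidate is to take $\Surface''_N$ homeomorphic to $\Surface'_N$ itself, realised by composing $\at{\ContinuousMap'}{\Surface'_N}$ with a deformation retraction of each non-orientable 2-cell onto a 1-subcomplex in its interior, and then homotoping the resulting graph-valued map into $\SkeletonOfDim{\GeneralizedComplex}{1}$ using the gluing data, keeping the restriction to $\partial \Surface'_N$ fixed. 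Verifying that such a deformation exists and is compatible with the block decomposition arising from the cellwise covering structure is the core technical challenge; once this is in place, $\Surface'_N$ and $\Surface''_N$ have identical topological invariants, complexity is preserved, and the construction concludes the proof.
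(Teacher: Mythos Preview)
Your overall strategy --- pass to a cellwise covering via Corollary~\ref{crl:CellwiseCovering}, then replace the piece $\Surface'_N$ lying over the non-orientable $2$-cells by something landing in $\GeometricRealization{\OrientableComplex}$ --- is exactly the paper's strategy, and your homological argument that the resulting class is unchanged is fine. The gap is in the construction of the replacement $\Surface''_N$.

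Your proposed mechanism does not work as written. A deformation retraction of a non-orientable cell onto an interior spine moves the boundary of the cell, so composing $\ContinuousMap'|_{\Surface'_N}$ with it cannot keep $\ContinuousMap'|_{\partial\Surface'_N}$ fixed; and there is no natural map from that spine into $\SkeletonOfDim{\GeneralizedComplex}{1}$ to compose with afterwards. What you actually need is a map $\Surface'_N \to \SkeletonOfDim{\GeneralizedComplex}{1}$ agreeing with $\ContinuousMap'$ on $\partial\Surface'_N$. Since $\SkeletonOfDim{\GeneralizedComplex}{1}$ is aspherical, this is a question about $\pi_1$: for each connected component of $\Surface'_N$ (genus $g$, boundary curves $c_1,\dots,c_b$) one must realise the surface relation $\prod_i c_i = \prod_{j=1}^{g}[\alpha_j,\beta_j]$ in the free group $\pi_1(\SkeletonOfDim{\GeneralizedComplex}{1})$, with the $c_i$ sent to prescribed conjugacy classes. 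Nothing in your argument establishes that $g$ commutators suffice; the original relation holds in $\pi_1$ of the non-orientable cell, which does not map to $\pi_1(\SkeletonOfDim{\GeneralizedComplex}{1})$.

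The paper sidesteps this extension problem entirely. Rather than keeping the homeomorphism type of $\Surface'_N$, it observes (via the factorisation through the orientation cover) that over each boundary circle $c$ of each non-orientable plasma the signed degrees of the boundary circles of $\Surface'_N$ sum to zero in $H_1(c)\cong\Integers$. Since $c$ is a circle, this homological vanishing already guarantees an extension to a \emph{planar} surface mapping into $c\subset\SkeletonOfDim{\GeneralizedComplex}{1}$. One then replaces each component of $\Surface'_N$ by such spheres-with-holes. Because a planar surface minimises both genus and $\ChiMinus$ among all orientable surfaces with a given number of boundary components, this replacement can only decrease $(\ChiMinus,\Genus)$, and the argument closes without any commutator-length analysis.
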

\begin{proof}
  By Lemma~\ref{lem:HomologyGeneralized2Complex}, $\iota_O$ induces an
  isomorphism on second homology.

  Given a non-orientable $\Cell\in\ManifoldModelsOfDimension{2}$ we write
  $
    \pi_{\Cell}
    \colon
    \hat{\Cell}
    \to
    \Cell
  $
  for its orientation cover.
  We write $\hat{\GeneralizedComplex}$ for the generalized $2$-complex,
  where the cell type of every non-orientable $2$-cell is replaced by its
  orientation cover
  and the gluing maps are given by the composition of the covering projection
  and the corresponding gluing map of $X$.
  Note that for every non-orientable $\Cell \in \CellsOfDim{X}{2}$, we have
  $
    \apply{\HomologyOfSpaceMorphism{\pi_{\Cell}}{1}}
      {\FundamentalClass{
        \partial
        \apply{\CellType}{\hat{\Cell}}
        }
      }
    =0
    \in
    \HomologyOfSpaceObject{
      \partial
      \apply{\CellType}{\Cell}
    }{1}
  $.
  By Corollary~\ref{crl:CellwiseCovering} every representative of a
  homology class has a smaller representative which is a cellwise
  covering and therefore lifts to
  $\GeometricRealization{\hat{\GeneralizedComplex}}$.
  Let us denote such a representative by
  $
    \ContinuousMap
    \colon
    \Surface
    \to
    \GeometricRealization{\GeneralizedComplex}
  $
  and a lift by $\hat{\ContinuousMap}$.

  Consider a non-orientable generalized $2$-cell $\Cell$. There are two
  cases:
  Either the preimage of $\IncludedPlasma{\Cell}$ is empty, in which case there
  is nothing to
  show, or
  $
    \at{
      \hat{\ContinuousMap}
    }{
      \apply{\ContinuousMap^{-1}}{\IncludedPlasma{\Cell}}
    }
  $ is a degree $d>0$ cover.

  So let us assume that we are in the second case.
  Since $\Surface$ is orientable and
  $\apply{\CellType}{\hat{\Cell}}$ is connected, we know that the fundamental
  class of $\apply{\ContinuousMap^{-1}}{\IncludedPlasma{\Cell}}$
  gets mapped to
  $
    d
    \FundamentalClass{
      \IncludedPlasma{\hat{\Cell}}
    }
  $.

  We know that
  $
    \FundamentalClass{
      \partial
      \apply{\ContinuousMap^{-1}}
        {\IncludedPlasma{\Cell}}
    }
  $
  maps to
  $
    d
    \FundamentalClass{
      \partial
      \IncludedPlasma{\hat{x}}
    }
  $
  and therefore
  $
    \FundamentalClass{
      \partial
      \apply{\ContinuousMap^{-1}}{\IncludedPlasma{\Cell}}
    }
  $
  maps to $0$ in
  $
    \HomologyOfSpaceObject{
      \partial \IncludedPlasma{\Cell}}
    {1}
  $.
  Let us denote a connected component of $\partial \IncludedPlasma{\Cell}$ by
  $c$, and the intersection of
  $
    \apply{\ContinuousMap^{-1}}{c}
  $
  and a connected component of
  $\apply{\ContinuousMap^{-1}}{\IncludedPlasma{\Cell}}$ by
  $\tilde{c}$.
  Then $\at{\ContinuousMap}{\tilde{c}}$ extends to a sphere with
  $
    \Cardinality{
      \PathComponentsOfSpace{\tilde{c}}
    }
  $
  holes such that the image of this sphere is contained in $c$.
  Indeed the image of the fundamental class of $\tilde{c}$ in
  $
  \HomologyOfSpaceObject{
    \partial \IncludedPlasma{\Cell}}
  {1}
  $
  being zero, implies the existence of such an extension.

  Therefore we can replace
  $
    \ContinuousMap
    \colon
    \Surface
    \to
    \GeometricRealization{X}
  $
  by deleting the preimages of all non-orientable cells and replacing them by
  the previously constructed spheres with holes, one for every $c$. Since a
  disjoint union of spheres with holes has a lower genus and a lower
  $\ChiMinus$ than all connected surfaces with the same number of boundary
  components, we conclude that this procedure reduces $\Genus$ and $\ChiMinus$.
  The resulting map is evidently homotopic to a map that avoids the
  non-orientable cells.
\end{proof}
\begin{remark}
  While all the attainable sets of $\GeneralizedComplex_{O}$ and
  $\GeneralizedComplex$ agree, it is not true in
  general that $\iota_{O}$ is minimizer surjective.
\end{remark}

\section{Effective Computability}
\label{scn:WeightFunctions}
To talk about the computability of invariants of generalized 2-complexes,
it has to be explained how a generalized 2-complex can be encoded
combinatorially in a machine-understandable way.
There are several ways of fixing such an encoding.
We will briefly describe one option here:

We require each cell set to be given as an explicit set of labels
(natural numbers, or words in the usual alphabet for readability).
The gluing maps of 1-cells are given by denoting for each 1-cell the
labels of its start- and endpoint.
The gluing map of each boundary component of a 2-cell
is given by a non-empty sequence of 1-cells and their formal inverses,
or a 0-cell.
Every combinatorial generalized 2-complex can be encoded this way.
For example, the complex from Example~\ref{exm:Handlebody} has the following
encoding:

\begin{verbatim}
  ( ( [0-cells]
      p [label]
    ),
    ( [1-cells]
      ( a [label], (p [start], p [end]) ),
      ( b [label], (p [start], p [end]) ),
      ( c [label], (p [start], p [end]) )
    ),
    ( [2-cells]
      ( X [label],
        ( 1 [number of boundary components],
          0 [genus],
          + [orientable: yes]
        ),
        ( a, b, a^{-1}, b^{-1} ) [gluing map]
      ),
      ( Y [label],
        ( 1 [number of boundary components],
          0 [genus],
          + [orientable: yes]
        ),
        ( a, c, a^{-1}, c^{-1} ) [gluing map]
      ),
      ( Z [label],
        ( 1 [number of boundary components],
          0 [genus],
          + [orientable: yes]
        ),
        ( a, b, c, a^{-1}, c^{-1}, b^{-1} ) [gluing map]
      )
    )
  )
\end{verbatim}

In this section we will establish the computability of the attainable set for a
large class of spaces.
This class of spaces is characterized by the existence of a "weight function".

Let $\GeneralizedComplex$ be a generalized combinatorial 2-complex.
\begin{definition}[Corners]
  The connected components of the subset of
  $
    \bigsqcup_{\Cell \in \CellsOfDim{\GeneralizedComplex}{2}}
    \apply{\CellType}{\Cell}
  $
  that gets mapped to 0-cells are either points or circles.
  Let the \introduce{set of corners of 2-cells}
  $\apply{C_{2}}{\GeneralizedComplex}$
  be the set of those connected components which are points.

  Let $r_{2,2}$ and $r_{2,0}$ be the canonical maps
  \[
    \left(r_{2,0}, r_{2,2}\right)
    \colon
    \apply{C_{2}}{\GeneralizedComplex}
    \rightarrow
    \CellsOfDim{\GeneralizedComplex}{0}
    \times
    \CellsOfDim{\GeneralizedComplex}{2}.
  \]

  Let the \introduce{set of corners of 1-cells} $\apply{C_{1}}{\GeneralizedComplex}$
  denote the set of points in
  $
    \bigsqcup_{\Cell \in \CellsOfDim{X}{1}}
    \apply{\CellType}{\Cell}
  $
  that get mapped to 0-cells by the gluing map.
  Let $r_{1,1}$ and $r_{1,0}$ be the canonical maps
  \[
    \left(r_{1,0}, r_{1,1}\right)
    \colon
    \apply{C_{1}}{\GeneralizedComplex}
    \rightarrow
    \CellsOfDim{\GeneralizedComplex}{0}
    \times
    \CellsOfDim{\GeneralizedComplex}{1}.
  \]
  Let
  $
    e^{+}, e^{-} \colon
    \apply{C_{2}}{\GeneralizedComplex}
    \rightarrow
    \apply{C_{1}}{\GeneralizedComplex}
  $
  be the maps with
  $
    r_{1,0} \circ e^{+}
    = r_{2,0}
  $,
  $
    r_{1,0} \circ e^{-}
    = r_{2,0}
  $,
  which send a corner to the corresponding corner
  of the image of the boundary segment to its right (left)
  under the gluing map (right and left are given by an orientation on
  the boundary of the 2-cell, the choice of the orientation is irrelevant).
\end{definition}

\begin{definition}[Link Sequence]
  For a zero-cell $\Cell$,
  a \introduce{link sequence around $\Cell$} is a sequence
  $
    (c_{1}, o_{1}), \ldots, (c_{n}, o_{n})
    \in
    \apply{r^{-1}_{2,0}}{\Cell}\times \{+,-\}
  $
  such that
  $
    \apply{e^{o_{i}}}{c_{i}}
    =
    \apply{e^{-o_{i+1}}}{c_{i+1}}
  $.
  It is said to be \introduce{without folds} if no corner is followed by
  its inverse.
\end{definition}

\begin{definition}[Weight Function]%
\label{dfn:WeightFunction}
  A \textbf{weight function} for $\GeneralizedComplex$ is a map
  $
    \WeightFunction
    \colon
    \apply{C_{2}}{\GeneralizedComplex}
    \rightarrow
    \Rationals
  $
  such that
  \begin{enumerate}[(1)]
    \item\label{itm:WeightedEuler}
      the \introduce{weighted Euler characteristic}
      $\apply{\WeightedEulerCharacteristic}{\Cell}$
      of $\Cell$ is negative,
      where $\WeightedEulerCharacteristic$ is defined as
      \[
        \apply{\WeightedEulerCharacteristic}{\Cell} =
        \apply{\EulerCharacteristic}{\apply{\CellType}{\Cell}}
          - 0.5 \times \apply{\NumberOfBoundarySegments}{\Cell}
          + \apply{\NumberOfBoundaryCircles}{\Cell}
          + \sum_{c \in \apply{r^{-1}_{2,2}}{\Cell}}
              \apply{\WeightFunction}{\Cell}
      \]

      Here the number of boundary segments
      $\apply{\NumberOfBoundarySegments}{\Cell}$ is defined as the
      number of connected components of the preimage of
      $\SkeletonOfDim{\GeneralizedComplex}{0}$ under
      $
        \GluingMap{\Cell}
        \colon
        \partial
        \apply{\CellType}{\Cell}
        \rightarrow
        \SkeletonOfDim{X}{1}
      $
      which are points,
      and the number of boundary circles
      $\apply{\NumberOfBoundaryCircles}{\Cell}$
      as the number of those connected components which are circles.
    \item\label{itm:LinkSequence}
      For each non-empty link sequence
      $
        \left(
          \left(c_{1},o_{1}\right),
          \ldots,
          \left(c_{n},o_{n}\right)
        \right)
      $
      without folds we have:
      \[
        \apply{\WeightFunction}{c_{1}}
        + \cdots
        + \apply{\WeightFunction}{c_{n}}
        \geq 1
      \]
  \end{enumerate}
\end{definition}

\begin{remark}
  Given a $2$-dimensional $M_{-1}$ polyhedral complex (See Section~I.7 of
  \cite{BridsonHaefligerMetric}), then this is a $CAT(-1)$-space (See
  Section~II.1 of \cite{BridsonHaefligerMetric}) if and only if the angles of
  the incoming edges at every vertex define a weight function (Theorem~5.5 in
  Section~II.5 of \cite{BridsonHaefligerMetric}).

  Conversely every $2$-complex that has a weight function with only
  positives weights can be turned into a $CAT(-1)$-space by choosing a metric
  of constant curvature $-1$ on each $2$-cell.
\end{remark}

\begin{theorem}[Effective Computability]%
\label{thm:EffectiveComputability}
  There is an algorithm, taking as input
  an (encoding of a) connected generalized 2-complex $\GeneralizedComplex$,
  a weight function $w$ on $\GeneralizedComplex$,
  and a homology class $\HomologyClass$ in
  $\HomologyOfSpaceObject{\GeneralizedComplex}{2}$,
  which computes
  $\AttainableSet{\GeneralizedComplex}{\HomologyClass}$.
\end{theorem}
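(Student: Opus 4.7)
The strategy is to combine Corollary~\ref{crl:CellwiseCovering} with a quantitative estimate coming from the weight function; by that corollary, computing $\AttainableSet{\GeneralizedComplex}{\HomologyClass}$ reduces to enumerating cellwise coverings without folds together with their $\left(\apply{\ChiMinus}{\Surface},\apply{\Genus}{\Surface}\right)$ data. The key estimate I would first establish is that for every cellwise covering without folds $\ContinuousMap\colon\Surface\to\GeometricRealization{\GeneralizedComplex}$ from a closed oriented surface,
\[
  \apply{\EulerCharacteristic}{\Surface}
  \leq
  \sum_{\Cell\in\CellsOfDim{\GeneralizedComplex}{2}}
    N_{\Cell}\,\apply{\WeightedEulerCharacteristic}{\Cell},
\]
where $N_{\Cell}$ denotes the total covering degree over the $2$-cell $\Cell$. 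I would equip $\Surface$ with the generalized $2$-complex structure from the proof of Theorem~\ref{thm:NormalForm} and pull back $\WeightFunction$ along $\ContinuousMap$; the right-hand side then equals $\apply{\WeightedEulerCharacteristic}{\Surface}$, because each of $\apply{\EulerCharacteristic}{\apply{\CellType}{y}}$, $\apply{\NumberOfBoundarySegments}{y}$, $\apply{\NumberOfBoundaryCircles}{y}$, and the corner-weight sum at a $2$-cell $y$ of $\Surface$ scales by the local covering degree $d_{y}$. Using the wheel/disk dichotomy of Lemma~\ref{lem:HomotopicCellwiseCovering}(\ref{itm:0Blocks}) to bookkeep vertex, edge, corner, and circle counts, one computes $\apply{\EulerCharacteristic}{\Surface}=V_{W}+\apply{\WeightedEulerCharacteristic}{\Surface}-W_{\Surface}$, where $V_{W}$ is the number of wheel-type $0$-cells of $\Surface$ and $W_{\Surface}$ is the total pulled-back corner weight. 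Since $\ContinuousMap$ admits no folds, at each wheel-type vertex the link sequence projects to a fold-free link sequence in $\GeneralizedComplex$, so condition~(\ref{itm:LinkSequence}) of Definition~\ref{dfn:WeightFunction} gives $W_{\Surface}\geq V_{W}$. Combined with $\apply{\WeightedEulerCharacteristic}{\Cell}<0$, this yields $\apply{\ChiMinus}{\Surface}\geq\sum_{\Cell}N_{\Cell}\,\absolute{\apply{\WeightedEulerCharacteristic}{\Cell}}$, so a uniform bound on $\apply{\ChiMinus}{\Surface}$ uniformly bounds each $N_{\Cell}$.

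With this a priori bound in hand the algorithm proceeds as follows. Using Lemma~\ref{lem:HomologyGeneralized2Complex} and Proposition~\ref{prp:NonOrientableCellsIrrelevant}, represent $\HomologyClass$ by a cellular cycle $\sum a_{\Cell}\left[\Cell\right]$ on orientable $2$-cells, and construct an explicit surface representative by taking signed copies of the $\apply{\CellType}{\Cell}$ and closing them up via the given cycle relations; this produces an a priori upper bound $U$ for $\apply{\ChiMinus}{\HomologyClass}$. Next enumerate all cellwise coverings without folds with $N_{\Cell}\leq U/\absolute{\apply{\WeightedEulerCharacteristic}{\Cell}}$ for every $\Cell$. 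Each such covering is determined by a finite choice of covering degrees on $2$-cells together with the standard gluing data at $1$-blocks and $0$-blocks from Lemma~\ref{lem:HomotopicCellwiseCovering}(\ref{itm:1Blocks})--(\ref{itm:0Blocks}), so the search is finite and effective. For each enumerated candidate one can check cell-by-cell whether it represents $\HomologyClass$ and record the resulting pair $\left(\apply{\ChiMinus}{\Surface},\apply{\Genus}{\Surface}\right)$ in a finite set $P\subset\NaturalNumbers^{2}$. The attainable set $\AttainableSet{\GeneralizedComplex}{\HomologyClass}$ is then the closure of $P$ under the saturation moves of Lemma~\ref{lem:AttainableSetSaturation} and Proposition~\ref{prp:AttainableSetBounds}(\ref{itm:ConditionalSaturation}), since by Corollary~\ref{crl:CellwiseCovering} every point of the attainable set dominates a point coming from an enumerated covering.

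The main obstacle is the first-step combinatorial estimate: isolating $V_{W}$ as the gap between $\apply{\EulerCharacteristic}{\Surface}$ and $\apply{\WeightedEulerCharacteristic}{\Surface}$ demands precise bookkeeping of the vertex, edge, corner, and circle contributions in the induced structure on $\Surface$, relying crucially on the local normal forms of Lemma~\ref{lem:HomotopicCellwiseCovering}. A secondary subtlety is verifying that $U$ is chosen large enough: every exceptional point of $\AttainableSet{\GeneralizedComplex}{\HomologyClass}$ (one inside the cone of Proposition~\ref{prp:AttainableSetBounds}(\ref{itm:Bounds}) not reached by saturation from an already-enumerated point) must itself be realized by a cellwise covering without folds whose $\apply{\ChiMinus}{\Surface}$ lies in the enumerated range, which one expects from the fact that any such exceptional point admits a representative dominated by one with bounded $N_{\Cell}$.
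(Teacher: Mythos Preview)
Your approach is essentially the paper's: both derive the Euler-characteristic inequality $\apply{\EulerCharacteristic}{\Surface}\leq\sum_{\Cell}d_{\Cell}\,\apply{\WeightedEulerCharacteristic}{\Cell}$ for cellwise coverings without folds from the link-sequence condition, construct an initial representative to seed a numerical bound, enumerate coverings with bounded degree, and saturate. Your bookkeeping identity $\apply{\EulerCharacteristic}{\Surface}=V_{W}+\apply{\WeightedEulerCharacteristic}{\Surface}-W_{\Surface}$ is exactly the content of the paper's chain of (in)equalities, just phrased more explicitly.

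There is one genuine gap, which you flag as a ``secondary subtlety'' but do not close. Your bound $U$ is an upper bound for $\apply{\ChiMinus}{\HomologyClass}$, i.e.\ for the \emph{global} $\ChiMinus$-minimum, but what the enumeration must capture is every minimum of $\AttainableSet{\GeneralizedComplex}{\HomologyClass}$ in the product order. Such a minimum can have $\ChiMinus$-value strictly larger than $\apply{\ChiMinus}{\HomologyClass}$ (for instance the essentially-connected minimizer at $(\ChiMinus_{c},\Genus_{c})$), so the inequality $N_{\Cell}\leq\apply{\ChiMinus}{\Surface'}/\absolute{\apply{\WeightedEulerCharacteristic}{\Cell}}$ does not a priori force $N_{\Cell}\leq U/\absolute{\apply{\WeightedEulerCharacteristic}{\Cell}}$. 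The paper closes this gap by taking $B=2\apply{\Genus}{\Surface}$ (twice the genus of the initial representative, not its $\ChiMinus$) and then arguing, via Proposition~\ref{prp:AttainableSetBounds}, that any product-order minimum $(\Surface',\ContinuousMap')$ satisfies $\tfrac{1}{2}\apply{\ChiMinus}{\Surface'}\leq\apply{\Genus}{\Surface'}\leq\apply{\Genus_{c}}{\HomologyClass}\leq\apply{\Genus}{\Surface}$, hence $\apply{\ChiMinus}{\Surface'}\leq B$. Replacing your $U$ by $2\apply{\Genus}{\Surface}$ of the explicit representative and inserting this minimality argument fixes your proof completely.
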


\begin{proof}[Proof of Theorem~\ref{thm:EffectiveComputability}]
  We prove that all minima in the product order
  $\AttainableSet{\GeneralizedComplex}{\HomologyClass}$
  have representatives
  $
    \ContinuousMap
    \colon
    \Surface
    \rightarrow
    \GeneralizedComplex
  $
  which are cellwise covers without folds
  and such that the sum of all degrees over all 2-cells of $\ContinuousMap$
  is bounded by a computable constant $\apply{C}{\HomologyClass}$
  depending only on
  $\GeneralizedComplex$, $\HomologyClass$ and $\WeightFunction$.
  Then there are only finitely many of such
  $\left(\Surface,\ContinuousMap\right)$, and they are enumerable.
  Hence one can compute the attainable set
  by computing $\Genus$ and $\ChiMinus$ of these representatives,
  and then saturate (Proposition~\ref{prp:AttainableSetBounds},
  Lemma~\ref{lem:AttainableSetSaturation})
  to obtain $\AttainableSet{\GeneralizedComplex}{\HomologyClass}$.

  Let $\left(\Surface,\ContinuousMap\right)$
  be any representative of $\HomologyClass$.
  Such a representative can be constructed combinatorially
  by taking $\apply{n}{\Cell}$ many copies of every cell $\Cell$ of
  $\GeneralizedComplex$ and gluing them arbitrarily,
  where $\apply{n}{\Cell}$ is defined by
  $
    \HomologyOfSpaceObject{\GeometricRealization{\GeneralizedComplex}}{2}
    \ni
    \HomologyClass
    \mapsto
    \sum_{\Cell \in \CellsOfDim{\GeneralizedComplex}{2}}
      \apply{n}{\Cell}
      \apply{
        \HomologyOfSpaceMorphism{\CellInclusion{\Cell}}{2}
      }{
        \FundamentalClass{%
          \apply{\CellType}{\Cell},
          \partial\apply{\CellType}{\Cell}
        }
      }
    \in
    \HomologyOfSpacePairObject{
      \GeometricRealization{\GeneralizedComplex}
    }{
      \SkeletonOfDim{\GeneralizedComplex}{1}
    }{2}
  $.
  The first step in the computation of
  $\AttainableSet{\GeneralizedComplex}{\HomologyClass}$
  is the construction of such an arbitrary representative.
  In particular, we obtain the number $B=2\apply{\Genus}{\Surface}$,
  which we need as an upper bound in the next step.
  This number may depend on the choice of the gluing.

  Let $\left(\Surface',\ContinuousMap'\right)$ be a representative of a
  minimum of $\AttainableSet{\GeneralizedComplex}{\HomologyClass}$.
  We want to prove that $\apply{\ChiMinus}{\Surface'} \leq B$.
  To prove this, note that the minimality implies
  $
    \left(
      \apply{\ChiMinus}{\Surface'}-2,
      0.5\ \apply{\ChiMinus}{\Surface'}
        + \apply{\NumberOfNonsphericalComponents}{\Surface'}
    \right)
    \not\in
    \AttainableSet{\GeneralizedComplex}{\HomologyClass}
  $.
  Hence
  $
    0.5\ \apply{\ChiMinus}{\Surface'}
    \leq
    0.5\ \apply{\ChiMinus}{\Surface'}
      + \apply{\NumberOfNonsphericalComponents}{\Surface'}
    \leq
    \apply{\Genus_{c}}{\HomologyClass}
    \leq
    \apply{\Genus}{\Surface}
  $.

  By Corollary~\ref{crl:CellwiseCovering}, every minimum of
  $\AttainableSet{\GeneralizedComplex}{\HomologyClass}$
  has a representative which is a cellwise covering without folds.
  Let $\left(\Surface',\ContinuousMap'\right)$ be such a representative,
  and consider $\Surface'$ to be a generalized 2-complex
  with the induced cell structure i.e. we identify $\Surface'$ with the
  corresponding $\mathcal{S}$ in Definition~\ref{dfn:CellwiseCovering}.
  Then, denoting by $d_{\Cell}$ the covering degree
  of $\ContinuousMap'$ over the cell $\Cell$,
  we have:
  \begin{alignat}{8}
    \nonumber
    - \apply{\ChiMinus}{\Surface'}
    \leq
    \apply{\EulerCharacteristic}{\Surface'}
    =
    && \sum_{\Cell \in \CellsOfDim{\Surface'}{2}} \phantom{d_{\Cell}} \Bigg(
    &    \apply{\EulerCharacteristic}{\apply{\CellType}{\Cell}}
       \Bigg)
    && - \left| \CellsOfDim{\Surface'}{1} \right|
    && + \left| \CellsOfDim{\Surface'}{0} \right|
    \\
    \label{eqn:BoundarySegments}
    =
    && \sum_{\Cell \in \CellsOfDim{\GeneralizedComplex}{2}} d_{\Cell} \Bigg(
    &    \apply{\EulerCharacteristic}{\apply{\CellType}{\Cell}}
    &&  - 0.5\ \apply{\NumberOfBoundarySegments}{\Cell}
       \Bigg)
    && + \left| \CellsOfDim{\Surface'}{0} \right|
    \\
    \label{eqn:ApplyLinkInequality}
    \leq
    && \sum_{\Cell \in \CellsOfDim{\GeneralizedComplex}{2}} d_{\Cell} \Bigg(
    &    \apply{\EulerCharacteristic}{\apply{\CellType}{\Cell}}
    && - 0.5\ \apply{\NumberOfBoundarySegments}{\Cell}
    && + \sum_{c \in \apply{r^{-1}_{2,2}}{\Cell}} \apply{w}{c}
       + \apply{\NumberOfBoundaryCircles}{\Cell}
       \Bigg)
    \\
    \nonumber
    =
    && \sum_{\Cell \in \CellsOfDim{\GeneralizedComplex}{2}} d_{\Cell} \Bigg(
    &    \apply{\WeightedEulerCharacteristic}{\Cell}
       \Bigg)
    &&
    &&
    \\
    \nonumber
    \leq
    &&   \left(
           \sum_{\Cell \in \CellsOfDim{\GeneralizedComplex}{2}} d_{\Cell}
         \right)
    &    \max_{c}{\apply{\WeightedEulerCharacteristic}{c}}
    &&
    &&
  \end{alignat}
  Equation~(\ref{eqn:BoundarySegments}) holds because every 1-cell of
  $\Surface'$ appears two times as a boundary segment.
  Inequality~(\ref{eqn:ApplyLinkInequality})
  holds because of the link condition (\ref{itm:LinkSequence})
  in Definition~\ref{dfn:WeightFunction}.
  Because of $\max_{c}{\apply{\WeightedEulerCharacteristic}{c}}<0$
  we obtain:
  \[
    \sum_{\Cell \in \CellsOfDim{\GeneralizedComplex}{2}} d_{\Cell}
    \leq -\apply{\ChiMinus}{\Surface'} /
    \max_{c}{\apply{\WeightedEulerCharacteristic}{c}}
    \leq -B / \max_{c}{\apply{\WeightedEulerCharacteristic}{c}}
  \]
\end{proof}

\begin{example}[Three Octagons, continued]
\label{exm:ThreeOctagonsContinued}
  We show how the attainable sets from
  Example~\ref{exm:ThreeOctagons}
  can be computed using the following weight function: Every black corner in
  Figure~\ref{fig:WeightsThreeOctagons} has weight $\frac{1}{8}$ and every
  white corner has weight $\frac{1}{2}$.
  With this weight function,
  the weighted Euler characteristic of each cell is $-2$.
  \begin{figure}[h]
    \def\svgwidth{0.7\textwidth}
    \input{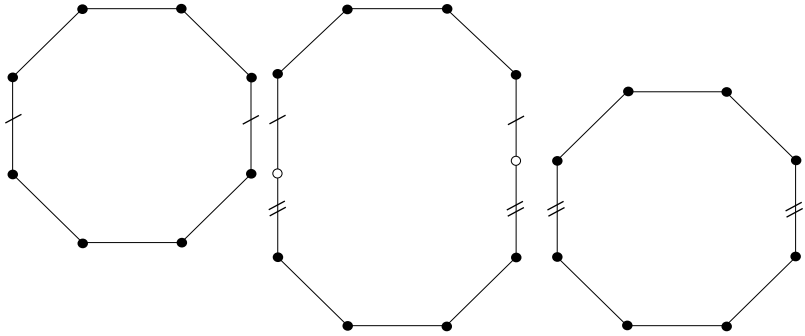}
    \caption{%
      Weights on three octagons, Example~\ref{exm:ThreeOctagonsContinued}%
    }
    \label{fig:WeightsThreeOctagons}
  \end{figure}

  In a first step, we construct a connected representative
  $\Surface_{n}^{\textrm{c}}$ for $n\HomologyClass$ with
  $%
    (
      \apply{\ChiMinus}{\Surface_{n}^{\textrm{c}}},%
      \apply{\Genus}{\Surface_{n}^{\textrm{c}}}%
    )%
    =(6n+2,3n+2)%
  $.
  We do this by gluing $n$ copies of each cell in a chain, and then
  gluing these chains together, as depicted in
  Figure~\ref{fig:ThreeOctagonsConnectedRepresentative}.
  This complex has $(4n-2)$ $0$-cells, $13n$ $1$-cells and $3n$ $2$-cells,
  hence the formulas for $\apply{\ChiMinus}{\Surface_{n}^{\textrm{c}}}$ and
  $\apply{\Genus}{\Surface_{n}^{\textrm{c}}}$.
  \begin{figure}[h]%
    \def\svgwidth{\textwidth}
    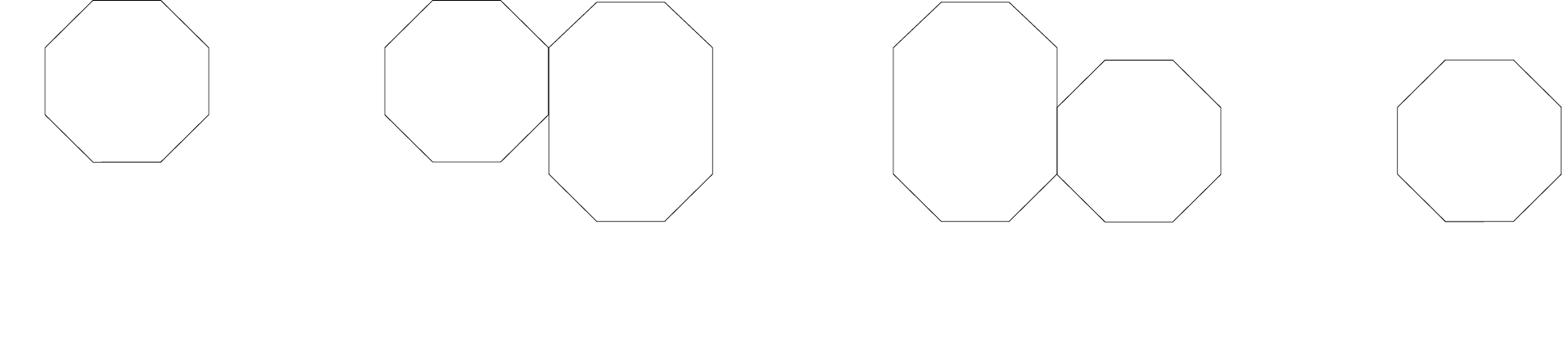
    \caption{%
      Connected representative, Example~\ref{exm:ThreeOctagonsContinued}%
    }
    \label{fig:ThreeOctagonsConnectedRepresentative}
  \end{figure}

  Hence, for any $(\ChiMinus,\Genus)$-minimizer $\Surface_{n}$ representing
  $n\HomologyClass$,
  $%
    6n
    \leq
    \apply{\ChiMinus}{\Surface_{n}}%
    \leq%
    \apply{\ChiMinus}{\Surface_{n}^{\textrm{c}}}%
    \leq 6n+2%
  $.
  Each representative which is a cellwise cover without folds
  covers each cell exactly $n$ times
  (by the weighted Euler characteristic bound:
  a priori one cell could be covered $(n+1)$ times, but this would yield a
  homological degree of $(n+1)$ or $(n-1)$, not $n$).

  As a second step, we construct a non-connected representative
  $\Surface_{n}^{\textrm{nc}}$ for the class $n\HomologyClass$ such that
  $%
    (%
      \apply{\ChiMinus}{\Surface_{n}^{\textrm{nc}}},%
      \apply{\Genus}{\Surface_{n}^{\textrm{nc}}}%
    )%
    = (6n,3n+3)%
  $
  by taking a regular connected $n$-fold cover of each cell.
  This representative has three connected components of genus $(n+1)$,
  hence the formulas.

  Now all that is left to show is that there is no surface $\Surface_{n}$
  representing the class $n\HomologyClass$ with
  $%
    (%
      \apply{\ChiMinus}{\Surface_{n}^{\textrm{nc}}},%
      \apply{\Genus}{\Surface_{n}^{\textrm{nc}}}%
    )%
    =(6n,3n+2)%
  $.
  For such a surface, the estimate for $\ChiMinus$ given by the weighted Euler
  characteristic would be sharp. Hence, the sum over the weights for each link
  used in the cover has to be $1$. One can see that this is only possible
  if each corner of weight $\frac{1}{2}$ is glued to another corner of weight
  $\frac{1}{2}$. But then the representative is a disjoint union of surfaces
  each mapping to single cells, hence it has at least $3$ components,
  contradicting
  $%
    \apply{\NumberOfNonsphericalComponents}{\Surface_{n}}%
    =%
    \apply{\Genus}{\Surface_{n}} -%
    \frac{1}{2}\apply{\ChiMinus}{\Surface_{n}} = 2%
  $

  In conclusion, the only minima of the attainable set are $(6n+2, 3n+2)$ and
  $(6n,3n+3)$.
\end{example}

\section{Undecidability}
\label{scn:Undecidability}
We need the following construction introduced in~\cite{GordonUndecidability}:
Given a finite presentation
\[
  \Pi = \left( q_{1}, \ldots, q_{m}; r_{1}, \ldots r_{n} \right)
\]
of a group $\Group_{\Pi}$ and $w$ a word in $q_{1}, \ldots, q_{m}$,
let $\Pi_{w}$ denote the presentation given by $\Pi$ together with
extra generators $a,\alpha,b,\beta$ and additional relations:
\begin{enumerate}[(i)]
  \item $a \alpha a^{-1} = b^{2}$
  \item $\alpha a \alpha^{-1} = b \beta b^{-1}$
  \item
    $
      a^{2i} q_{i} \alpha^{2i}
      =
      \beta^{2\left(i+1\right)} b \beta^{-2\left(i+1\right)}
    $,
    $1 \leq i \leq m$
  \item $\left[w,a\right] = \beta^{2} b \beta^{-2}$
  \item $[w,\alpha] = \beta b \beta b^{-1} \beta^{-1}$
\end{enumerate}
The group presented by $\Pi_{w}$ will be denoted as $\Group_{\Pi,w}$.
The following properties of $\Group_{\Pi,w}$ are
proven in the proof of Theorem~$3$ in~\cite{GordonUndecidability}:
\begin{itemize}
  \item
    If $w$ represents the unit element in $\Group_{\Pi}$,
    $\Group_{\Pi,w}$ is trivial.
  \item
    Otherwise, $a$ and $b$ have infinite order in $\Group_{\Pi,w}$.
  \item
    In any case, $a$ and $b$ are commutators in $\Group_{\Pi,w}$,
    namely
    $
      a
      =
      \left[
        \alpha^{-1} \beta^{-1} w \beta \alpha,
        \alpha^{-1} \beta^{-1} \alpha \beta \alpha
      \right]
    $,
    which follows from the additional relations~(i) and~(v),
    and
    $
      b
      =
      \left[
        \beta^{-2} w \beta^{2},
        \beta^{-2} a \beta^{2}
      \right]
    $,
    which follows from the additional relation (iv).
\end{itemize}
\begin{proof}[Proof of Theorem~\ref{thm:MinimalGenusIsUndecidable}]
  First we will prove this for $i = \Genus$.
  Let $\Pi = \left(Q;R\right)$ be a finite presentation of a group $G_{\Pi}$.
  Let $w$ be a word in the generators $Q$.

  We define for every $\left(\Pi,w\right)$ as above a 2-complex
  $\GeneralizedComplex_{\Pi,w}$ and a class
  $
    \HomologyClass_{\Pi,w}
    \in
    \HomologyOfSpaceObject%
      {\GeometricRealization{\GeneralizedComplex_{\Pi,e}}}%
      {2}
  $
  in the following way:
  $\GeneralizedComplex_{\Pi,w}$ is the union of the presentation
  complex of $\Pi_{w}$ and a cell of type $\Sigma_{B,2}$
  glued to it via $a$ and $b$.
  The class $\HomologyClass_{\Pi,w}$ is the fundamental class of
  $\Sigma_{B,2}$ plus the sum of the classes of the two commutators above.

  Now this class is represented by a surface of genus bounded by $B$ if and
  only if $w$ was trivial.
  Hence, if there were an algorithm that computes minimal genus,
  there would be an algorithm solving the word problem,
  which is known to be undecidable (See~\cite{NovikovWordProblem}).

  Every minimizing representative for the problem above is automatically
  essentially connected.
  Because for essentially connected surfaces $\ChiMinus$ and $\Genus$ are
  related, both problems are undecidable.
\end{proof}

\bibliography{sources}

\begin{thebibliography}{10}

\bibitem{BridsonHaefligerMetric}
Martin~R. Bridson and Andr\'{e} Haefliger.
\newblock {\em Metric spaces of non-positive curvature}, volume 319 of {\em
  Grundlehren der mathematischen Wissenschaften}.
\newblock Springer, 1 edition, 1999.

\bibitem{CalegariSurfaceSubgroups}
Danny Calegari.
\newblock Surface subgroups from homology.
\newblock {\em Geom. Topol.}, 12(4):1995--2007, 2008.

\bibitem{CalegariScl}
Danny Calegari.
\newblock {\em scl}, volume~20 of {\em MSJ Memoirs}.
\newblock Mathematical Society of Japan, Tokyo, 2009.

\bibitem{CalegariSclRational}
Danny Calegari.
\newblock Stable commutator length is rational in free groups.
\newblock {\em J. Amer. Math. Soc.}, 22(4):941--961, 2009.

\bibitem{ChenSclGraphGroups}
Lvzhou Chen.
\newblock Scl in graphs of groups.
\newblock arXiv:1904.08360, 2019.

\bibitem{EdmondsBranchedCoverings}
Allan~L. Edmonds.
\newblock Deformation of maps to branched coverings in dimension two.
\newblock {\em Ann. of Math. (2)}, 110(1):113--125, 1979.

\bibitem{GabaiFoliations}
David Gabai.
\newblock Foliations and the topology of {$3$}-manifolds.
\newblock {\em J. Differential Geom.}, 18(3):445--503, 1983.

\bibitem{GordonUndecidability}
C.~McA. Gordon.
\newblock Some embedding theorems and undecidability questions for groups.
\newblock In {\em Combinatorial and geometric group theory ({E}dinburgh,
  1993)}, volume 204 of {\em London Math. Soc. Lecture Note Ser.}, pages
  105--110. Cambridge Univ. Press, Cambridge, 1995.

\bibitem{GromovBoundedCohomology}
Mikhael Gromov.
\newblock Volume and bounded cohomology.
\newblock {\em Publications Math\'ematiques de l'IH\'ES}, 56:5--99, 1982.

\bibitem{HopfFundamentalgruppe}
Heinz Hopf.
\newblock Fundamentalgruppe und zweite {B}ettische {G}ruppe.
\newblock {\em Comment. Math. Helv.}, 14:257--309, 1942.

\bibitem{KronheimerMrowkaThomConjecture}
P.~B. Kronheimer and T.~S. Mrowka.
\newblock The genus of embedded surfaces in the projective plane.
\newblock {\em Math. Res. Lett.}, 1(6):797--808, 1994.

\bibitem{NovikovWordProblem}
P.~S. Novikov.
\newblock On the algorithmic insolvability of the word problem in group theory.
\newblock In {\em American {M}athematical {S}ociety {T}ranslations, {S}er 2,
  {V}ol. 9}, pages 1--122. American Mathematical Society, Providence, R. I.,
  1958.

\bibitem{OszvathSzaboThomConjecture}
Peter Ozsv\'{a}th and Zolt\'{a}n Szab\'{o}.
\newblock The symplectic {T}hom conjecture.
\newblock {\em Ann. of Math. (2)}, 151(1):93--124, 2000.

\bibitem{SkoraDegree}
Richard Skora.
\newblock The degree of a map between surfaces.
\newblock {\em Math. Ann.}, 276(3):415--423, 1987.

\bibitem{ThurstonNorm}
William~P. Thurston.
\newblock A norm for the homology of {$3$}-manifolds.
\newblock {\em Mem. Amer. Math. Soc.}, 59(339):i--vi and 99--130, 1986.

\end{thebibliography}
\bibliographystyle{plain}
\end{document}